\documentclass{article} 

\usepackage[dvipsnames,svgnames,table]{xcolor}

\usepackage[colorinlistoftodos,textsize=footnotesize,backgroundcolor=yellow!40,bordercolor=red,linecolor=red]{todonotes}

\usepackage{titling}   
\usepackage{titlesec}  

\usepackage{mathtools} 
\usepackage[skins,theorems]{tcolorbox} 
\usepackage{amsmath}   
\usepackage{amsthm}    
\usepackage{amssymb}   

\usepackage[a4paper, margin=3cm]{geometry}

\usepackage{tikz}
\usepackage{tikz-cd}

\usepackage{lineno}


\newtheorem{theorem}{Theorem} 
[section] 
\newtheorem{lemma}[theorem]{Lemma} 
\newtheorem{proposition}[theorem]{Proposition} 
\newtheorem{corollary}[theorem]{Corollary} 

\theoremstyle{definition} 
\newtheorem{definition}[theorem]{Definition} 

\newtheorem{thmletter}{Theorem}          

\newcommand{\C}{\mathbb{C}}    
\newcommand{\N}{\mathbb{N}}    
\newcommand{\R}{\mathbb{R}}    
\newcommand{\Z}{\mathbb{Z}}    




\newcommand{\forwhich}[0]{{ \ ; \ }} 

\newcommand{\al}{\alpha}
\newcommand{\be}{\beta}

\newcommand{\ga}{\gamma}
\newcommand{\De}{\Delta}
\newcommand{\de}{\delta}

\newcommand{\epv}{\varepsilon}

\newcommand{\et}{\eta}

\newcommand{\Si}{\Sigma}

\newcommand{\na}{\nabla} 



\DeclareMathOperator*{\divergence}{div}
\DeclareMathOperator*{\supp}{supp}

\newcommand\norm[1]{\left\Arrowvert#1\right\Arrowvert} 
\newcommand\abs[1]{\left\vert#1\right\vert} 








\usepackage[colorlinks=true, linkcolor=black, urlcolor=black, citecolor=DarkBlue, filecolor=black]{hyperref} 


\numberwithin{equation}{section}

\pretitle{\begin{center}\sffamily\huge}    
\posttitle{\par\end{center}}               
\preauthor{\begin{center}\large\sffamily}   
\postauthor{\par\end{center}}              
\predate{\begin{center}\large\itshape\sffamily} 
\postdate{\par\end{center}}                

\usepackage{tocloft} 

\title{Stability of the Morse Index for the $p$-harmonic Approximation of Harmonic Maps into Homogeneous Spaces}          
\author{Dominik Schlagenhauf \thanks{Department of Mathematics, ETH Zurich, 8092 Z\"urich, Switzerland.}} 
\date{\today}                


\begin{document}
\maketitle 

\begin{abstract}
\noindent
In the joint work of the author with Da Lio and Rivi\`ere \cite{DLRS25} we studied the stability of the Morse index for Sacks-Uhlenbeck sequences into spheres as $p\searrow2$.
These are critical points of the energy
$$E_p(u) \coloneqq \int_\Sigma \left( 1+\abs{\nabla u}^2\right)^{p/2} \ dvol_\Sigma,$$
where $u:\Sigma \rightarrow S^n$ is a map from a closed Riemannian surface $\Sigma$  into a sphere $ S^n$.
In this paper we extend the results found in \cite{DLRS25} to the case of Sacks-Uhlenbeck sequences into homogeneous spaces, by incorporating the strategy introduced in \cite{BR25}.
In the spirit of \cite{DLRS25}, we show in this setting the upper semicontinuity of the Morse index plus nullity and an improved pointwise estimate of the gradient in the neck regions around blow up points.
\end{abstract}
{\noindent{\small{\bf Keywords.} $p$-harmonic maps, Morse index theory,    conformally invariant variational problems, energy quantization}}\par
{\noindent{\small{\bf  MSC 2020.}  35J92, 58E05, 35J50, 35J47, 58E12,58E20, 53A10, 53C43}}

\tableofcontents
\vspace{10mm} 
\newpage  



\section{Introduction}
Let $(\Sigma^2,h)$ be a closed smooth Riemannian surface and let $({\mathcal{N}}^n,g)$ be an  at least $C^2$ $n$-dimensional closed Riemannian manifold, which we assume to be isometrically embedded in Euclidean space $\R^m$.
Harmonic maps are critical points with respect to outer variations of the Dirichlet energy
 \begin{equation}
\label{DE}
E: W^{1,2}(\Sigma;{\mathcal{N}}) \rightarrow\R; \qquad
E(u) \coloneqq \int_\Sigma \abs{\nabla u}^2\ dvol_\Sigma.
\end{equation}
A fundamental question concerns the existence of nontrivial harmonic maps.
The Dirichlet energy is known to be conformally invariant, to possess a non-compact invariance group, and not to satisfy the Palais-Smale condition.
Thus, classical minmax methods are not applicable to \eqref{DE}, and traditional variational theory cannot be directly used.
One of the first existence results was obtained by Eells and Sampson \cite{ES64}.
Under the additional assumption that the target manifold has non‐positive sectional curvature, they proved existence and decay properties of the harmonic map heat flow, which enabled to construct a harmonic map within each homotopy class.
Sacks and Uhlenbeck’s influential results \cite{SaU, SU82} cover the general case:

\begin{thmletter}
[Sacks, Uhlenbeck \cite{SaU}]
\label{THML: SackUhle}	
If $\pi_2(\mathcal N)=0$, then every homotopy class of maps from $\Sigma$ to $\mathcal N$ contains a minimising harmonic map.
If $\pi_2(\mathcal N)\neq0$, then there exists a generating set for $\pi_2(\mathcal N)$ consisting of conformal branched minimal immersions of harmonic spheres which minimise energy and area in their homotopy classes.
\end{thmletter}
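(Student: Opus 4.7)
The plan is to follow the $\alpha$-energy perturbation method pioneered by Sacks and Uhlenbeck. For each $p > 2$ consider $E_p(u) = \int_\Sigma (1+|\nabla u|^2)^{p/2}\, dvol_\Sigma$ on $W^{1,p}(\Sigma;\mathcal{N})$. In dimension two the embedding $W^{1,p} \hookrightarrow C^0$ is continuous for $p>2$, so homotopy classes are well-defined and stable under weak $W^{1,p}$ convergence; moreover $E_p$ is coercive, weakly lower semicontinuous, and satisfies the Palais-Smale condition. Direct methods therefore produce a smooth $p$-harmonic minimiser $u_p$ in every prescribed homotopy class.

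Next I would pass to the limit $p \searrow 2$. The key technical step is an $\varepsilon$-regularity estimate uniform in $p$ close to $2$: there exists $\varepsilon_0>0$ such that whenever $\int_{B_r(x)}|\nabla u_p|^2 < \varepsilon_0$, the $u_p$ enjoy uniform $C^{1,\beta}$ bounds on $B_{r/2}(x)$. This confines the defect of strong $W^{1,2}$ convergence to a finite set of concentration points; away from them $u_p$ converges smoothly to a harmonic map $u_\infty$. At each concentration point one rescales $u_p$ at the appropriate scale and extracts, in the limit, a nonconstant finite-energy harmonic map $\R^2 \to \mathcal{N}$, which extends across infinity by Sacks-Uhlenbeck's removable-singularity theorem to a harmonic sphere $\phi_i : S^2 \to \mathcal{N}$. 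Conformal invariance of the Dirichlet energy in dimension two forces each $\phi_i$ to be a conformal branched minimal immersion, and a neck analysis establishes the energy quantisation $\lim_{p\to 2}(E_p(u_p) - \mathrm{vol}(\Sigma)) = E(u_\infty) + \sum_i E(\phi_i)$.

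To conclude, I would distinguish the two topological cases. If $\pi_2(\mathcal{N}) = 0$, every $\phi_i$ is null-homotopic so $u_\infty$ lies in the original homotopy class $[u_p]$; combining the minimality of $u_p$ in that class with the energy quantisation gives the chain $E(u_\infty) + \sum_i E(\phi_i) \le \inf_{[v]=[u_p]} E(v) \le E(u_\infty)$, which forces every $E(\phi_i)=0$, hence no bubbling. Strong convergence then shows $u_\infty$ minimises $E$ in its class. If $\pi_2(\mathcal{N}) \neq 0$, start from any finite generating set of $\pi_2(\mathcal{N})$ and minimise $E_p$ in each class: each $u_p$ splits in the limit into a base map and bubbles whose homotopy classes together generate the class of $u_p$. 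An induction on the total energy of a generating set, using the uniform energy gap $E(\phi)\ge \varepsilon_0$ for any nontrivial bubble, eventually yields a generating set realised by harmonic spheres that are energy (and hence area) minimising in their respective classes.

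The principal obstacle is the uniform-in-$p$ $\varepsilon$-regularity together with the neck analysis needed for the energy quantisation identity. Conformal invariance, which drives both the removable-singularity theorem and the no-neck-energy statement, holds strictly only at $p=2$, so the rescalings and annular estimates must be calibrated delicately so that no energy leaks into the neck regions as $p \searrow 2$. The second delicate point is the homotopy bookkeeping in the case $\pi_2(\mathcal{N}) \neq 0$: verifying that the collection of bubbles genuinely generates $\pi_2(\mathcal{N})$ requires combining the energy induction with the fact that bubbles accurately record the topology lost by the limit, which ultimately rests on the same neck analysis.
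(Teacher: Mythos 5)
Your proposal follows essentially the same $p$-energy relaxation strategy as the original Sacks--Uhlenbeck argument, which the paper cites without reproducing: direct methods via Palais--Smale and the $W^{1,p}\hookrightarrow C^0$ embedding, uniform-in-$p$ $\varepsilon$-regularity, a finite concentration set, rescaling to produce finite-energy harmonic maps on $\mathbb{R}^2$, the removable-singularity theorem, conformality of harmonic $S^2\to\mathcal N$ via the vanishing Hopf differential, and the topological dichotomy on $\pi_2(\mathcal N)$. The overall architecture is correct.

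One step as written is, however, false and directly contradicted by the paper's own discussion: you invoke the energy quantisation \emph{identity} $\lim_{p\to 2}\bigl(E_p(u_p)-\mathrm{vol}(\Sigma)\bigr)=E(u_\infty)+\sum_i E(\phi_i)$ as a general fact. The paper emphasizes (citing Li--Wang \cite{LiWa-2}) that this identity \emph{fails} for general Sacks--Uhlenbeck sequences unless one imposes extra structure on the target or on the relationship between $p_k-2$ and the degenerating conformal moduli of the necks. For your $\pi_2(\mathcal N)=0$ argument this is repairable, because you only need the easier inequality $\liminf_{p\to 2}\bigl(E_p(u_p)-\mathrm{vol}(\Sigma)\bigr)\ge E(u_\infty)+\sum_i E(\phi_i)$, which follows from weak lower semicontinuity plus the local concentration of at least $\varepsilon_0$ at every bubble point; combined with minimality of $u_p$ and $[u_\infty]=\lim[u_p]$ this already forces $\sum_i E(\phi_i)=0$. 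So replace the asserted equality by the one-sided bound and the argument survives. A second, minor point: the Sacks--Uhlenbeck statement that harmonic spheres \emph{generate} $\pi_2(\mathcal N)$ is understood modulo the $\pi_1(\mathcal N)$-action (free versus based homotopy classes), and your energy-induction sketch for the $\pi_2(\mathcal N)\neq 0$ case elides this bookkeeping as well as the fact that minimising $E$ directly in a nontrivial $\pi_2$ class need not produce a minimiser in that class (the infimum is typically achieved only after allowing the class to split).
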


To prove Theorem \ref{THML: SackUhle}, Sacks and Uhlenbeck in the foundational work \cite{SaU} introduced the following subcritical relaxations of the Dirichlet Energy
\begin{equation}\label{Ep}
E_p: W^{1,p}(\Sigma;{\mathcal{N}}) \rightarrow\R; \qquad
E_p(u) \coloneqq \int_\Sigma \left( 1+\abs{\nabla u}^2\right)^{p/2} \ dvol_\Sigma,
\end{equation}
where $p>2$.
Since the energy $E_p$ satisfies the Palais--Smale condition and $W^{1,p}(\Sigma)$ embeds into $C^0(\Sigma)$, Sacks and Uhlenbeck constructed a sequence (as $p\searrow 2$) of smooth critical points of \eqref{Ep} within a fixed free homotopy class.
They showed that energy can concentrate at most at finitely many points, where so-called bubbles start to form and, after rescaling, converge to harmonic spheres, while away from these blow-up points the sequence converges to a harmonic map. 
The phenomenon of bubbling was further understood by Parker \cite{Par96}, who discovered the  ``bubble tree''.
In this context of concentration compactnesses, the following three fundamental questions emerge.
\begin{enumerate}
\item \textbf{Energy Identity}:
Is there any loss of energy in the limit?
That is, does the limit of the energy of the sequence equals the energy of the limiting harmonic map plus the energies of the bubbles?
\item \textbf{Necklessness Property} (or {\em $C^0$-no neck property}):
Is the image of the macroscopic limiting harmonic map attached to the images of the bubbles?
More precisely, are the necks connecting the macroscopic and microscopic scales disappearing in the limit?
\item \textbf{Index}:
Is the Morse index preserved in the limit?
In other words, is the number of directions along which the energy decreases preserved in the limiting harmonic map and the bubbles?
\end{enumerate}
In the case of sequences of harmonic maps the energy identity is due to Jost \cite{Jos91} and Parker \cite{Par96}.
It was further generalized to the setting of conformally invariant Lagrangians by Laurain and Rivi\`ere \cite{LR14}, relying on the previous work of Lin and Rivi\`ere \cite{LiRi02}, where the importance of Lorentz space interpolation was fist observed. 
The necklessness property for harmonic maps was first obtained by Parker \cite{Par96}.
Rivi\`ere and Laurain \cite{LR14} showed the $L^{2,1}$-energy quantization (for harmonic maps), which asserts that no $L^{2,1}$-energy is asymptomatically lost in the necks, and thus by the observations made in \cite{LiRi02} implies the necklessness property.
(Here $L^{2,1}$ denotes a Lorentz space.)
See also \cite{MiRi} for a detailed explanation on the relationship between the $L^{2,1}$-energy quantization and the necklessness property.
\medskip

In contrast, the energy identity and necklessness property do no hold in general in the Sacks-Uhlenbeck setting of $p$-harmonic sequences, as a counterexample constructed by Li and Wang \cite{LiWa-2} shows.
Additional informations relating the parameter $p$ to the degenerating conformal structure of the neck regions are required.
Under additional assumptions on the target manifold, several affirmative results have been established:
The energy identity and necklessness property for Sacks-Uhlenbeck sequences to a sphere is due to Li and Zhu \cite{LZ19} and for sequences to a homogeneous space is due to Bayer and Roberts \cite{BR25}.
However, in \cite{BR25}, the computations were not explicitly carried out; instead, the PDE was rewritten and the rest of the proof was referenced to \cite{LZ19}.
Furthermore, Lamm \cite{Lamm} established the energy identity in the setting of min-max critical points while making use of the Struwe's monotonicity trick (see, e.g. \cite{Struwe}).
\medskip

The Morse index of a critical point is the number of independent directions along which the energy decreases and the nullity is the number of independent directions along which the energy is constant.
Our aim is to understand the asymptotic behaviour of the index of a sequence exhibiting bubbling phenomena.
In general, one cannot expect the limit of the Morse index of the sequence elements to equal the Morse indices of the limiting harmonic map plus the the bubbles, as some negative variations may converge to constant variations in the limit.
For this reason, the best one can hope for is the \textit{lower semi-continuity of the Morse index} and the \textit{upper semi-continuity of the Morse index plus nullity} (extended Morse index).
The lower semi-continuity of the Morse index can be shown by classical arguments once the energy identity is established (see Proposition \ref{LSC} and also \cite{KaSt}, \cite{Riv2}).
In contrast to the lower semi-continuity of the Morse index (see, e.g., \cite{CM} for minimal surfaces), the upper semi-continuity is in general significantly more subtle, as it requires a precise control over the sequence of solutions in regions where compactness is lost.
Da Lio, Gianocca and Rivi\`ere  \cite{DGR22} developed a new method to establish the upper semi-continuity of the extended Morse index for conformally invariant variational problems in two dimensions, including the case of harmonic maps.
This new theory has proven to be highly effective in a variety of problems in geometric analysis, including recent developments on biharmonic maps \cite{Mi, MiRi3}, constant mean curvature surfaces \cite{Work}, Ginzburg--Landau energies \cite{DG}, Ricci shrinkers \cite{Yud25}, Willmore surfaces \cite{MiRi2} and Yang--Mills connections \cite{GL, GLR}.
\medskip

In the previous work by the author, in collaboration with Francesca Da Lio and Tristan Rivi\`ere \cite{DLRS25}, the upper semi-continuity of the extended Morse index was shown for Sacks-Uhlenbeck sequences into the $n$-sphere ${S}^n$ as they converge in the bubble tree sense.
This result relied crucially on the high degree of symmetry of the $n$-sphere $S^n$, and in particular on the global conservation laws arising in the Euler--Lagrange equations of \eqref{DE} and \eqref{Ep}, which are consequences of Noether's theorem.
\medskip

In the present paper, we extend the results from \cite{DLRS25} to the setting of an arbitrary closed homogeneous Riemannian target manifold $\mathcal{N}^n$.
(Recall that a homogeneous manifold is one whose group of isometries acts transitively, e.g. spheres, tori and projective spaces)
This broad extension beyond the sphere case is the key new contribution of our work:
\begin{thmletter}
\label{morseindextheoremfordirichletintr}
The extended Morse index is upper semicontinuous along subsequences of Sacks-Uhlenbeck maps into a homogeneous Riemannian manifold.
\end{thmletter}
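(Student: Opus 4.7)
The plan is to transpose the Da Lio--Gianocca--Rivi\`ere framework of \cite{DGR22}, as implemented in \cite{DLRS25} for spheres, to a general closed homogeneous Riemannian target $\mathcal{N}$, substituting the global conservation laws used on $S^n$ by the local Noether currents available on any homogeneous space, following the reformulation strategy of \cite{BR25}. The first step is to put the Euler--Lagrange equation for $E_p$ into a form amenable to Lorentz-space estimates: for each Killing vector field $X$ on $\mathcal{N}$, Noether's theorem yields the conservation law
$$\divergence\!\left((1+\abs{\nabla u_p}^2)^{(p-2)/2}\, \langle du_p, X(u_p) \rangle\right)=0.$$
Since $\mathcal{N}$ is homogeneous, at each point one can select a finite collection of Killing fields whose values form a frame of $T\mathcal{N}$, and the conservation laws above can be assembled to rewrite the full $p$-harmonic system as a divergence equation with a right-hand side carrying Jacobian-type null structures, in the spirit of \cite{BR25}.

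The second step is to upgrade this algebraic structure to the pointwise gradient estimate on neck regions announced in the abstract. Following the Hodge decomposition and Lorentz-space duality arguments of \cite{LR14}, and the adaptation to the Sacks--Uhlenbeck setting in \cite{LZ19}, one uses the conservation laws above to obtain first an $L^{2,1}$-energy quantization in the necks around blow-up points, and then bootstraps this to a quantitative pointwise decay of $(1+\abs{\nabla u_p}^2)^{(p-2)/2} \abs{\nabla u_p}$. This is the technical input needed by the DGR22 machinery and the key geometric replacement for the global sphere conservation laws.

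With the gradient estimate in hand, the third step is the Morse index argument itself. Given a sequence of subspaces $W_p$ of variations on which the second variation $Q_p$ of $E_p$ at $u_p$ is non-positive, with $\dim W_p$ bounded uniformly, I would truncate each element of $W_p$ by a partition of unity adapted to the bubble tree (bulk regions near the weak limit and near each bubble, plus the connecting neck annuli). On bulk regions, the standard elliptic convergence of $u_p$ to the limiting harmonic map and to the rescaled bubbles allows one to pass $Q_p$ to the limit, producing a subspace of non-positive directions for the Jacobi form of the limiting object. On the neck annuli, the improved pointwise estimate forces $Q_p$ on cut-off variations to be non-negative up to errors vanishing as $p \searrow 2$, so that neck pieces do not contribute to the limiting extended index. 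Summing dimensions over the nodes of the bubble tree then yields the desired upper semicontinuity.

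The principal obstacle is the neck step. Replacing the single global conservation law available for $S^n$ by a frame-dependent collection of local Noether currents introduces error terms whose cumulative size one must control across many Killing-field components and across the conformally degenerating neck annuli, without picking up a logarithmic loss in the parameter describing the shape of the neck. Executing the entropy-type argument of \cite{DGR22} in this setting therefore demands a version of the \cite{BR25} reformulation that is compatible with the $L^{2,1}$--$L^{2,\infty}$ duality exploited in \cite{LR14}, and this compatibility is what I expect to be the main technical challenge of the paper.
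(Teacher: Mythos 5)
Your proposal follows the same architecture as the paper: Noether/Killing conservation laws for the equivariantly embedded homogeneous target (the \cite{BR25} reformulation giving a div--curl structure), a Whitney-extension plus Hodge decomposition plus Wente/Lorentz-duality argument for the $L^{2,1}$-neck quantization, a bootstrap to a pointwise gradient decay on dyadic annuli with an entropy-type link between $p-2$ and the conformal modulus of the neck, and finally the Da Lio--Gianocca--Rivi\`ere index machinery with the neck contributing only non-negatively.

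The one place where your sketch is more schematic than the paper is the third step. ``Truncate by a partition of unity, pass $Q_p$ to the limit, and sum dimensions'' is not by itself enough for \emph{upper} semicontinuity: the danger is that a family of non-positive directions for $u_k$ concentrates entirely in the shrinking neck, so that every truncated piece converges to $0$ and the dimension count collapses. The paper handles this by (i) choosing a weight $\omega_{\eta,k}$ built from the pointwise neck estimate, (ii) diagonalising the Jacobi operator $\mathcal L_{\eta,k}$ with respect to $\langle\cdot,\cdot\rangle_{\omega_{\eta,k}}$ and invoking Sylvester's law of inertia so that the extended index equals the dimension of the non-positive eigenspace, (iii) proving a positivity theorem for $Q_{u_k}$ on variations supported in the neck with a quantitative lower bound $\overline\kappa\int|w|^2\omega_{\eta,k}$, and (iv) the crucial non-vanishing lemma: a normalised eigenfunction with non-positive eigenvalue cannot simultaneously converge weakly to $0$ on the bulk and on the bubble, because then after cutting off it would lie in the neck and the positivity would force $Q_{u_k}>0$, contradicting non-positivity. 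Only after (iv) can one compare with the limiting weighted operators $\mathcal L_{\eta,\infty}$, $\widehat{\mathcal L}_{\eta,\infty}$ and count dimensions. You correctly isolate the neck estimate as the geometric input, but you should make explicit that the spectral/weighted set-up and the non-vanishing argument, not a bare cutoff, are what convert that input into the dimension inequality.
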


We outline in the following the main strategy to prove Theorem \ref{morseindextheoremfordirichletintr}.
Building on H\'elein’s foundational ideas \cite{H91}, Bayer and Roberts \cite{BR25} constructed a framework that expresses the Euler--Lagrange equation of \eqref{Ep} as a conservation law.
After rewriting the equation in a div--curl form, we proceed by adapting the strategy  from \cite{DLRS25} and \cite{DGR22}.
We provide an independent proof of the $L^{2,1}$-energy quantization (different from the one in \cite{BR25}) as it is necessary in establishing the refined gradient estimates in the neck regions.
This allows us to prove that the necks are asymptotically not contributing to the negativity of the second variation.
\medskip

The paper is organized as follows.
In Section 2 (Preliminary Definition and Results) we introduce the setting of the problem in full details and explore conservation laws in homogeneous manifolds. 
These notations will be used throughout the paper.
Section 3 is devoted to proving the $L^{2,1}$-energy quantization theorem for Sacks--Uhlenbeck sequences, extending the sphere-case arguments of \cite{DLRS25} to homogeneous manifolds.
(This result was first obtained in \cite{BR25}, using methods from \cite{LZ19}.)
In Section 4 we obtain a pointwise estimate of the gradient in the neck regions, which is an immediate improvement of the $\varepsilon$-regularity in \cite{SU81}.
This shows that asymptotically there is no loss of energy in the necks.
Section 5 establishes the upper semicontinuity of the extended Morse index by combining the neck estimates with a diagonalisation of the Jacobi operator associated to the second variation of the energies.
Theorem \ref{morseindextheoremfordirichletintr} is shown in Theorem \ref{THM: usc morse index sacks-uhlenbeck with one bubble}.
Finally, for the reader’s convenience, the Appendix includes the proof of the lower semicontinuity of the Morse index.
\medskip

\textbf{Acknowledgments.}
The author is sincerely grateful to Francesca Da Lio and Tristan Rivi\`ere for their continuous support and valuable advice.

\section{Preliminary Definition and Results}
\label{SECTION: Preliminary definition and results}

In this section we formally introduce the setting of the problem and the notations for the reminder of the paper.
Let $(\Sigma,h)$ be a smooth closed Riemann surface.
\begin{definition}
[Homogeneous Riemannian Manifold]
A smooth closed homogeneous Riemannian manifold is a smooth closed Riemannian manifold $(\mathcal N^n,g)$ such that its Lie group of isometries $G=\operatorname{Isom}(\mathcal N)$ acts transitive on $\mathcal N$.
(i.e. for all $q_1,q_2\in \mathcal N$ there exists $\phi\in G$ such that $\phi(q_1)=q_2$)
\end{definition}
\noindent
In the following $(\mathcal N,g)$ denotes a homogeneous Riemannian manifold with group of isometries $G=\operatorname{Isom}(\mathcal N)$.
Let us consider some elementary examples:
\begin{itemize}
\item $\mathcal N=S^n$ is a homogeneous Riemannian manifold, where the group of isometries acts by rotations.
\item $\mathcal N=\mathbb T^n= \R^n/\Z^n$ is a homogeneous Riemannian manifold, where the group of isometries acts by translations.
\item $\mathcal N=\mathbb{CP}^n=\C^{n+1}/\sim$ is a homogeneous Riemannian manifold, where $z\sim w$ if and only if $z=\lambda w$ for some $\lambda \in \C$. The group of isometries is given by $G=U(n+1)/U(1)$, where $U(1)=S^1\subset \C.$ (Similar, $\mathcal N=\mathbb{RP}^n=\R^{n+1}/\sim$ is a homogeneous Riemannian manifold.)
\item $\mathcal N=\mathrm{Gr}(k,n)$ the Grassmannian of $k$-planes in $\mathbb{R}^n$ is a homogeneous Riemannian manifold, where the group of isometries acts by rotations.
\end{itemize}
In the following $O(m)\subset \R^{m\times m}$ denotes the subgroup of orthogonal matrices.

\begin{theorem}
[Moore \cite{M76}]
Any homogeneous Riemannian manifold can be isometrically and equivariantly embedded in some Euclidean space. This means that if $(\mathcal N,g)$ is a homogeneous Riemannian manifold with isometry group $G$, then there exists an isometric embedding $\Phi: \mathcal N \rightarrow \R^m$ and an embedding  $\Pi:G \rightarrow O(m)$ such that for any $\psi \in G$ the following diagram commutes
\begin{equation}
\begin{tikzcd}[ampersand replacement=\&]
\mathcal N
  \arrow[r, "{\Phi}"]
  \arrow[d, "\psi"']
\&
\R^m
  \arrow[d, "\Pi(\psi)"]
\\
\mathcal N
  \arrow[r, "\Phi"]
\&
\R^m.
\end{tikzcd}
\end{equation}
\end{theorem}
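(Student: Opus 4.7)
The plan is to prove Moore's theorem via the Peter--Weyl theorem applied to the compact Lie group $G=\operatorname{Isom}(\mathcal{N})$. By Myers--Steenrod, $G$ is a Lie group, and since $\mathcal{N}$ is closed (hence compact) and $G$ acts transitively, $G$ is itself compact. Fixing a base point $p_{0}\in\mathcal{N}$ with isotropy subgroup $H=\operatorname{Stab}_{G}(p_{0})$, the orbit map $gH\mapsto g\cdot p_{0}$ is a $G$-equivariant diffeomorphism $G/H\to\mathcal{N}$, so it suffices to construct the pair $(\Phi,\Pi)$ on $G/H$. Equivariance of $\Phi$ amounts to producing a $G$-invariant map into a finite-dimensional orthogonal representation of $G$.

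Next I would use Peter--Weyl to supply candidate equivariant maps. For each finite-dimensional irreducible unitary representation $(\pi,V_{\pi})$ of $G$ and each $H$-fixed vector $v\in V_{\pi}^{H}$, the formula $\phi_{\pi,v}(gH):=\pi(g)v$ defines a smooth $G$-equivariant map $G/H\to V_{\pi}$. Viewing $V_{\pi}$ as a real Euclidean space (taking the underlying real structure and the real part of its Hermitian form), the unitary representation $\pi$ becomes an orthogonal representation $\Pi_{\pi}\colon G\to O(V_{\pi})$ relative to which $\phi_{\pi,v}$ is equivariant. A finite direct sum of such maps then produces a smooth $G$-equivariant map $\Phi=\bigl(\phi_{\pi_{1},v_{1}},\dots,\phi_{\pi_{N},v_{N}}\bigr)\colon G/H\to\mathbb{R}^{m}$ together with the orthogonal representation $\Pi=\Pi_{\pi_{1}}\oplus\cdots\oplus\Pi_{\pi_{N}}$ making the desired square commute.

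The remaining task is to choose the pairs $(\pi_{j},v_{j})$ and positive weights $c_{j}>0$ so that $c_{1}\phi_{\pi_{1},v_{1}}\oplus\cdots\oplus c_{N}\phi_{\pi_{N},v_{N}}$ is an isometric embedding. Injectivity and immersivity follow from the Peter--Weyl theorem: matrix coefficients of irreducibles separate points of $G$ and their differentials separate tangent vectors, and compactness of $\mathcal{N}$ ensures that finitely many suffice. The pullback metric $\Phi^{*}\langle\cdot,\cdot\rangle$ is automatically $G$-invariant; since $G$-invariant metrics on $G/H$ correspond bijectively to $\operatorname{Ad}(H)$-invariant inner products on $T_{p_{0}}\mathcal{N}$, it remains to realise $g|_{p_{0}}$ as a specific element of this finite-dimensional convex cone by a suitable choice of weights.

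The main obstacle will be the last step: verifying that the $\operatorname{Ad}(H)$-invariant bilinear forms on $T_{p_{0}}\mathcal{N}$ obtained as $(d\phi_{\pi,v})_{p_{0}}^{*}\langle\cdot,\cdot\rangle$ span the full cone of $\operatorname{Ad}(H)$-invariant inner products, so that a positive combination can hit the prescribed $g|_{p_{0}}$. This spanning property is ultimately a consequence of applying Peter--Weyl to the isotropy representation of $H$ on $T_{p_{0}}\mathcal{N}$ decomposed into $H$-irreducible pieces, but requires a careful dimension count and a selection of $H$-invariant vectors inside appropriate irreducibles of $G$ whose differentials at $p_{0}$ cover each isotypical component; $G$-invariance of both sides then propagates the isometric equality from $p_{0}$ to all of $\mathcal{N}$.
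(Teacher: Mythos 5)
Note first that the paper does not supply a proof of this statement; it is quoted directly as Moore's theorem \cite{M76}, so there is no in-paper argument to compare against. Your structural outline is sound and is in the spirit of Moore's actual argument: Myers--Steenrod plus compactness of $\mathcal N$ make $G$ a compact Lie group, $\mathcal N\cong G/H$, Peter--Weyl supplies $G$-equivariant smooth maps $\phi_{\pi,v}(gH)=\pi(g)v$ into orthogonal representations whenever $v\in V_\pi^H$, matrix coefficients separate points and tangent vectors so a finite direct sum gives an equivariant injective immersion (hence an embedding, by compactness), and the remaining issue is to hit the prescribed $G$-invariant metric.

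The genuine gap is precisely the step you flag. You need the pullback forms $b_{\pi,v}(X,Y)=\langle d\pi(X)v,d\pi(Y)v\rangle$ on $\mathfrak m\cong T_{p_0}\mathcal N$, taken over all irreducibles $\pi$ of $G$ and $v\in V_\pi^H$, to generate under nonnegative linear combinations a cone that contains the given $\operatorname{Ad}(H)$-invariant inner product $g|_{p_0}$. Your proposed justification --- decomposing the isotropy representation of $H$ on $T_{p_0}\mathcal N$ into $H$-irreducibles --- does not establish this. That decomposition merely parametrises the cone of invariant inner products on $\mathfrak m$; it says nothing about which of them actually arise as finite sums of $b_{\pi,v}$'s, and there is no a priori reason that on a given $H$-isotypic block the $b_{\pi,v}$ exhaust the invariant forms rather than a proper subcone. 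The claim is true, but it is the whole content of the theorem and requires a real argument. The standard route (and, to my recollection, Moore's) is an approximation and convexity argument rather than a pointwise representation-theoretic one: embed $G/H$ equivariantly into $L^2(G)$ via $gH\mapsto L_gf$ for a left-$H$-invariant approximate identity $f$ concentrated near $e\in G$; check that the resulting pullback form $(X,Y)\mapsto\langle X^R f,Y^R f\rangle_{L^2(G)}$ can be made arbitrarily close to any prescribed $\operatorname{Ad}(H)$-invariant inner product by shrinking the support of $f$; truncate $f$ to a finite Peter--Weyl sum (the pullback form is additive across isotypic components by their mutual orthogonality), which brings one back into a finite-dimensional orthogonal representation of the kind you describe; and finally observe that the cone generated by these forms is convex with nonempty interior, so density of its closure in the positive semi-definite invariant cone forces it to contain every positive definite invariant form, in particular $g|_{p_0}$. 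Without some such argument the last step of your proposal remains a plausible claim rather than a proof.
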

\textbf{Assumptions \& Notations:}
Henceforward, we will assume that $\mathcal N^n\subset \R^m$ is a submanifold of $\R^m$ and that its group of isometries $G\subset O(m)$ is a subgroup of $O(m)$.
Furthermore, we denote by $\mathfrak g=T_{id}G$ the Lie algebra of $G$ and $L\coloneqq\dim(G)=\dim(\mathfrak g)$.
We recall that as $G\subset O(m)$ we have $\mathfrak g \subset \mathfrak{so}(m)$.
The second fundamental form of the embedding $\mathcal N \hookrightarrow \R^m$ will be denoted by $\mathbb I_q(\cdot,\cdot)$.
\\[3mm]
\noindent
 For $p \geq 2$ we define the $p-$energy as
\begin{equation}
E_p: W^{1,p}(\Sigma;\mathcal N) \rightarrow\R; \qquad
E_p(u) \coloneqq \int_\Sigma \left( 1+\abs{\nabla u}^2\right)^{p/2} \ dvol_\Sigma.
\end{equation}

\begin{definition}
[$p-$Harmonic Map]
We say that a function $u \in W^{1,p}(\Sigma;\mathcal N)$ is a $p$-harmonic map if it is a critical point of $E_p$ with respect to variations in the target. In that case $u$ satisfies the Euler--Lagrange equation
\begin{equation}
\label{EQ: Euler-Lagrange equations in div form for p harm}
-\divergence\left( \left( 1+\abs{\nabla u}^2\right)^{\frac{p}{2} -1} \nabla u \right)= \left( 1+ \abs{\nabla u}^2 \right)^{\frac{p}{2}-1} \mathbb{I}_u\big(\nabla u , \nabla u\big) \in \R^{m},
\end{equation}
or in non-divergence form
\begin{equation}
\label{EQ: Euler-Lagrange equations in non-div form for p harm}
\De u +\left(\frac{p}{2}-1\right)\frac{\langle \na^2 u, \nabla u \rangle \nabla u}{1+\abs{\nabla u}^2}+ \mathbb{I}_u\big(\nabla u , \nabla u\big) = 0 \in \R^{m}.
\end{equation}
\end{definition}

\begin{lemma}
[$\varepsilon$-regularity]
\label{LEMMA: epsilon reg sacks uhlenbeck for p harm}
There exists an $\varepsilon > 0$, a constant $C>0$ and some $p_0>2$ such that for any $p$-harmonic map $u \in W^{1,p}(\Sigma;\mathcal N)$ with $p\in [2,p_0)$ and any geodesic ball $B_r\subset \Sigma$    if
\begin{equation}
\int_{B_r} \abs{\nabla u}^2 dx \le \varepsilon,
\end{equation}
then
\begin{equation}
\norm{\nabla u_k}_{L^\infty(B_{r/2})}
\le \frac{C}{r} \norm{\nabla u_k}_{L^2(B_{r})}.
\end{equation}
\end{lemma}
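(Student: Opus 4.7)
The plan is to reduce the $p$-harmonic system to a Rivi\`ere-type antisymmetric div--curl equation by exploiting the conservation laws available on a homogeneous target, and then to run a H\'elein-type small-energy regularity argument to upgrade the hypothesis $\int_{B_r}|\na u|^2\le\varepsilon$ to a pointwise gradient bound. The main additional difficulty compared with the classical case $p=2$ is to control both the weight $(1+|\na u|^2)^{p/2-1}$ and the extra non-divergence term $(p/2-1)\langle\na^2 u,\na u\rangle\na u/(1+|\na u|^2)$ appearing in \eqref{EQ: Euler-Lagrange equations in non-div form for p harm} uniformly in $p\in[2,p_0)$ for $p_0>2$ chosen sufficiently close to $2$.

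First, I would exploit the homogeneity of $\mathcal N$. Fix a basis $\{\xi_1,\dots,\xi_L\}$ of $\mathfrak g\subset\mathfrak{so}(m)$; each vector field $q\mapsto\xi_j q$ is Killing and tangent to $\mathcal N$, so pairing \eqref{EQ: Euler-Lagrange equations in div form for p harm} with $\xi_j u$ cancels the normal term $\mathbb I_u(\na u,\na u)$ and produces the conservation laws
\[
\divergence\bigl((1+|\na u|^2)^{p/2-1}\,\xi_j u\cdot\na u\bigr)=0,\qquad j=1,\dots,L,
\]
which are the ones introduced in \cite{BR25}. Since $G$ acts transitively, $\{\xi_1 q,\dots,\xi_L q\}$ spans $T_q\mathcal N$ pointwise, so this system encodes the full gradient of $u$.

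Second, combining these conservations with \eqref{EQ: Euler-Lagrange equations in div form for p harm} and the antisymmetry $\xi_j^T=-\xi_j$ rewrites the equation, in a local conformal chart on $\Sigma$, as a Rivi\`ere-type antisymmetric system
\[
-\divergence\bigl((1+|\na u|^2)^{p/2-1}\na u\bigr)=\Om\cdot(1+|\na u|^2)^{p/2-1}\na u,
\]
where $\Om$ is a matrix-valued $1$-form with values in $\mathfrak{so}(m)$ and $|\Om|\lesssim|\na u|$. Under the smallness hypothesis I would build a Coulomb gauge $P\in W^{1,2}(B_r,O(m))$ absorbing the antisymmetric part and solve the associated Hodge system, producing Morrey-type decay of $\int|\na u|^2$ and hence, via Adams' embedding, an $L^{2,1}_{\mathrm{loc}}$ bound on $\na u$ controlled by $\|\na u\|_{L^2(B_r)}$. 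Substituting back into \eqref{EQ: Euler-Lagrange equations in non-div form for p harm}, whose right-hand side is quadratic in $\na u$, a Calder\'on--Zygmund step promotes $\na u$ to $L^\infty$, and rescaling from radius $r$ to the unit ball yields the claimed bound.

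The main obstacle is uniformity in $p$. The weight $(1+|\na u|^2)^{p/2-1}$ is not a priori close to $1$, and the extra non-divergence term involves $\na^2 u$ with a coefficient of order $p-2$. To close the argument I would first perform a preliminary bootstrap on a slightly smaller ball, using only the $L^2$-smallness together with the coarser $\varepsilon$-regularity from \cite{SU81}, to obtain a scale-invariant pointwise bound $r\,\|\na u\|_{L^\infty}\le M$; this forces $(1+|\na u|^2)^{p/2-1}=1+O(p-2)$ pointwise, so that the $p$-correction can be absorbed into the antisymmetric term provided $p_0$ is taken sufficiently close to $2$ depending only on $\mathcal N$ and the Wente constant. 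This is the strategy of \cite{DLRS25} adapted to the general homogeneous setting via \cite{BR25}.
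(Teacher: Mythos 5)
The paper does not prove this lemma at all: it simply refers the reader to \cite{SU81}, Chapter 3, Main Estimate 3.2 and Lemma 3.4, where the result is established for critical points of the $\alpha$-energy into an arbitrary closed target manifold, with no use of homogeneity. You instead attempt a self-contained re-proof via the conservation-law/Rivi\`ere machinery. This is a genuinely different route, but there is a gap that makes the argument circular as written.

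The circularity sits in your last paragraph. To control the weight $(1+|\na u|^2)^{p/2-1}$ and the second-order term $(p/2-1)\langle\na^2 u,\na u\rangle\na u/(1+|\na u|^2)$, you propose to first obtain a scale-invariant pointwise bound $r\,\|\na u\|_{L^\infty}\le M$ ``using only the $L^2$-smallness together with the coarser $\varepsilon$-regularity from \cite{SU81}.'' But that ``coarser $\varepsilon$-regularity'' \emph{is} the statement of the lemma (up to the identification $p=2\alpha$): Main Estimate 3.2 of \cite{SU81} already gives $\|Du\|_{W^{1,p}(B_{r/2})}\le C\|Du\|_{L^2(B_r)}$, which by Morrey embedding yields the pointwise bound. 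So the step on which your whole reduction depends is the result you are trying to prove, and without it the weight is not a priori close to $1$ and the Coulomb-gauge/Wente scheme cannot be started uniformly in $p$. Beyond this, the passage ``build a Coulomb gauge $\ldots$ absorbing the antisymmetric part'' is stated for a \emph{weighted} divergence operator $\divergence\bigl((1+|\na u|^2)^{p/2-1}\na\,\cdot\bigr)$, for which the standard Uhlenbeck gauge construction and Wente estimate do not apply directly; in the paper this difficulty is handled in a different place (Lemma \ref{LEMMA: First est of nabla b in necks}, via the commutator estimate of \cite{BR19}), not by gauge fixing. Finally, worth noting: \cite{SU81}'s proof works for any closed target and does not require homogeneity, whereas your argument uses the conservation laws and hence $\mathcal N$ homogeneous; this is consistent with the paper's standing assumptions but is strictly less general than the cited result.
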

\noindent
For a proof see in \cite{SU81} Chapter 3, Main Estimate 3.2 and Lemma 3.4.

\subsection{Conservation Laws in Homogeneous Spaces}

We adopt the strategy used in \cite{BR25} to build a frame on $\mathcal N$, which allows one to write the $p$-harmonic map equation as a conservation law.
This goes back to \cite{H91}.    
We start by showing the following lemma.
\begin{lemma}
For any $q \in\mathcal N$ the map
\begin{equation}
\rho_q: \mathfrak g \rightarrow T_q \mathcal N; \qquad
\rho_q(\mathrm A)\coloneqq\mathrm Aq.
\end{equation}
is well-defined and surjective. 
\end{lemma}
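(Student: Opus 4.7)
The plan is to establish both properties by exploiting the fact that $\mathfrak g$ is the Lie algebra of $G\subset O(m)$ together with the transitive action of $G$ on $\mathcal N\subset \R^m$, as guaranteed by Moore's theorem.

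For well-definedness, I would fix $A\in \mathfrak g$ and consider the one-parameter subgroup $t\mapsto \exp(tA)\in G$, where $\exp$ denotes the usual matrix exponential (since $\mathfrak g\subset \mathfrak{so}(m)$ and $G\subset O(m)$). Because each $\exp(tA)$ is an isometry of $\mathcal N$, the curve $\gamma(t)\coloneqq \exp(tA)\cdot q$ stays on $\mathcal N$ for all $t\in \R$. Differentiating at $t=0$ and using $\frac{d}{dt}\big|_{t=0}\exp(tA)=A$ gives $\dot\gamma(0)=Aq$, which therefore belongs to $T_q\mathcal N$. This shows the map $\rho_q$ takes values in $T_q\mathcal N$, i.e.\ is well-defined. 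Linearity in $A$ is immediate from the definition $\rho_q(A)=Aq$.

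For surjectivity, I would use the orbit map $\pi_q: G\to \mathcal N$, $\pi_q(\psi)=\psi\cdot q$. By the transitivity of the action of $G$ on $\mathcal N$, the map $\pi_q$ is surjective. Let $H\coloneqq \mathrm{Stab}_G(q)$ be the (closed) isotropy subgroup at $q$; then $\pi_q$ descends to a smooth bijection $G/H\to \mathcal N$, which by standard Lie group theory is in fact a diffeomorphism. In particular $\pi_q$ is a smooth submersion. Computing the differential at the identity gives exactly $(d\pi_q)_{\mathrm{id}}(A)=Aq=\rho_q(A)$, so $\rho_q$ coincides with the differential of a submersion at the identity, hence is surjective onto $T_q\mathcal N$.

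Neither step poses a serious obstacle: well-definedness is a direct consequence of $G$ preserving $\mathcal N$, and surjectivity reduces to the standard fact that the orbit map of a transitive smooth Lie group action is a submersion. The only minor subtlety worth making explicit in the write-up is the identification of $\mathfrak g$ with matrices in $\mathfrak{so}(m)$ via the embedding $\Pi: G\hookrightarrow O(m)$ from Moore's theorem, which justifies writing $\rho_q(A)=Aq$ as a genuine matrix-vector product in $\R^m$.
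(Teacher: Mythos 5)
Your proof is correct and follows essentially the same route as the paper: both arguments establish well-definedness by differentiating a path in $G$ through the identity acting on $q$, and both establish surjectivity via the quotient structure $\mathcal N \cong G/\operatorname{Stab}(q)$. The only cosmetic difference is that you invoke directly the standard fact that the orbit map of a transitive smooth Lie group action is a submersion, whereas the paper reaches the same conclusion by lifting a geodesic through the fibration $G \to G/\operatorname{Stab}(q)$; your phrasing is slightly more streamlined but mathematically equivalent.
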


\begin{proof}
Let $q\in \mathcal N$ and let $\mathrm A \in \mathfrak g=T_{id}G$.
We want to show that $\mathrm Aq \in T_q\mathcal N$.
There exists some path $Q:(-\epsilon,\epsilon)\rightarrow G$ such that $Q(0)=id$ and $Q^\prime(0)=\mathrm A$.
Define the path $\gamma:(-\epsilon,\epsilon)\rightarrow \mathcal N$ given by $\gamma(t)\coloneqq Q(t)q$.
Then clearly, $\rho_q(\mathrm A)=\mathrm Aq=Q^\prime(0)q=\gamma^\prime(0) \in T_q\mathcal N$ and therefore $\rho_q$ is well-defined. \\
In the following we show that $\rho_q$ is onto.
Let $X \in T_q\mathcal N$.
Then there exists some geodesic $\gamma:(-\epsilon,\epsilon)\rightarrow \mathcal N$ such that $\gamma(0)=q$ and $\gamma^\prime(0)=X$.
We recall that $\mathcal N\cong G/\operatorname{Stab}(q)$, where $\operatorname{Stab}(q)$ denotes the stabilizer of $q$ with respect to the group action of $G$ on $\mathcal N$.
Hence, by the universal property we can lift the path $\gamma$ to a path $Q:(-\epsilon,\epsilon)\rightarrow G$ such that $\gamma(t)=Q(t)q$.
As $\mathrm A\coloneqq Q^\prime(0) \in \mathfrak g$ we have found $\rho_q(\mathrm A)=X$, showing surjectivity of $\rho_q$.
\end{proof}

\begin{lemma}
[Frame]
\label{LEMMA: Frame for the tangent space}
Let $\mathrm A^1, \dots,\mathrm A^L \in \mathfrak g$ be an orthonormal basis of antisymmetric matrices of $\mathfrak g\subset \mathfrak{so}(m)$ with respect to the inner product on $\R^{m\times m}$.
There exist $L$ smooth vector fields $Y^1,\dots,Y^L \in \Gamma(T\mathcal N)$ such that for any point $q \in \mathcal N$ and any tangent vector $X\in T_q\mathcal N$ one has the decomposition 
\begin{equation}
X=\langle \mathrm A^1 q, X \rangle Y^1 + \dots + \langle \mathrm A^L q, X \rangle Y^L.
\end{equation}
\end{lemma}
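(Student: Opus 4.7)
The plan is to construct the vector fields $Y^i$ as the inverse images of the generators $\mathrm A^i q$ under a naturally associated self-adjoint endomorphism of $T\mathcal{N}$. Concretely, I would use the map $\rho_q : \mathfrak g \to T_q \mathcal N$ from the previous lemma, which is surjective, and exploit the orthonormal basis $\mathrm A^1,\dots,\mathrm A^L$ to write its adjoint (with respect to the ambient Euclidean inner products on $\mathfrak g \subset \R^{m\times m}$ and on $T_q\mathcal N \subset \R^m$) as
$$\rho_q^*(X)=\sum_{i=1}^L \langle \mathrm A^i q , X\rangle\, \mathrm A^i.$$
The composition then yields the endomorphism
$$T_q(X)\coloneqq \rho_q\rho_q^*(X) = \sum_{i=1}^L \langle \mathrm A^i q, X\rangle\, \mathrm A^i q \in T_q\mathcal N.$$

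Since $\rho_q$ is surjective, standard linear algebra gives $\ker T_q = \ker \rho_q^* = (\operatorname{im}\rho_q)^\perp = \{0\}$, so $T_q$ is a positive self-adjoint automorphism of $T_q\mathcal N$. I would then define
$$Y^i(q) \coloneqq T_q^{-1}(\mathrm A^i q),$$
and verify the decomposition in a single line: for any $X\in T_q\mathcal N$,
$$\sum_{i=1}^L \langle \mathrm A^i q, X\rangle\, Y^i(q) = T_q^{-1}\!\left(\sum_{i=1}^L \langle \mathrm A^i q, X\rangle\, \mathrm A^i q\right) = T_q^{-1}(T_q X) = X.$$

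The one point that deserves care, and which I view as the main obstacle, is the smoothness of $q\mapsto Y^i(q)$. To handle it, I would interpret $T$ not merely as a pointwise linear map but as a bundle endomorphism of $T\mathcal N \to \mathcal N$: each $q\mapsto \mathrm A^i q$ is a smooth global section of $T\mathcal N$ (it is even linear in $q$ in the ambient coordinates of $\R^m$, and tangency was established in the previous lemma), and the inner products involved are smooth. Hence $T$ is a smooth endomorphism of $T\mathcal N$, and pointwise invertibility upgrades to global smoothness of $T^{-1}$ by the usual formula for matrix inversion in any local trivialization. Consequently, the $Y^i$ are smooth vector fields on $\mathcal N$ and the construction is complete.
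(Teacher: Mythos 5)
Your proof is correct, and it is a cleaner packaging of essentially the same idea. The paper constructs $Y^i$ through the pseudoinverse $\sigma_q = (\rho_q|_{\ker(\rho_q)^\perp})^{-1}$ and its adjoint, arriving at $Y^i = \sum_j \langle\sigma_q^*(\mathrm A^j),\sigma_q^*(\mathrm A^i)\rangle\,\mathrm A^j q$ after a chain of substitutions; you instead form the positive self-adjoint endomorphism $T_q=\rho_q\rho_q^*$ of $T_q\mathcal N$ and set $Y^i=T_q^{-1}(\mathrm A^i q)$, for which the defining identity is verified in one line. The two definitions agree: since $\rho_q$ is surjective, $\sigma_q$ is the Moore--Penrose pseudoinverse $\rho_q^*T_q^{-1}$, so $\sigma_q^*(\mathrm A^i)=T_q^{-1}(\mathrm A^i q)$, and the paper's final displayed computation shows precisely that this equals its $Y^i$. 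What your route buys is brevity and transparency (no auxiliary object $\sigma_q$ and no double-sum manipulation), and you also explicitly address the smoothness of $q\mapsto Y^i(q)$ — a point the paper's proof glosses over — by observing that $T$ is a smooth bundle endomorphism with nowhere-vanishing determinant, so $T^{-1}$ is smooth in any local trivialization.
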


\begin{proof}
Let $q\in \mathcal N$.
We observe that $\rho_q|_{\ker(\rho_q)^\perp}$ is an isomorphism.
Let $\sigma_q\coloneqq (\rho_q|_{\ker(\rho_q)^\perp})^{-1}$ and let $\sigma_q^*$ be its adjoint.
To $i=1,\dots,L$ we define
\begin{equation}
Y^i \coloneqq \sum_{j=1}^L \langle \sigma_q^*(\mathrm A^j) ,  \sigma_q^*(\mathrm A^i) \rangle\ \mathrm A^j q
= \sum_{j=1}^L \langle \sigma_q^*(\mathrm A^j) ,  \sigma_q^*(\mathrm A^i) \rangle\ \rho_q(\mathrm A^j).
\end{equation}
Let $X\in T_q\mathcal N$. As $\rho_q|_{\ker(\rho_q)^\perp}$ is an isomorphism we can find some $\mathrm A \in \ker(\rho_q)^\perp \subset \mathfrak g$ such that $\rho_q(\mathrm A)=X$. With $\mathrm A=\sigma_q(X)$ write
\begin{equation}
\mathrm A=\sum_{j=1}^L \langle \mathrm A, \mathrm A^j \rangle \mathrm A^j
=\sum_{j=1}^L \langle \sigma_q(X), \mathrm A^j \rangle \mathrm A^j
=\sum_{j=1}^L \langle X, \sigma_q^*(\mathrm A^j) \rangle \mathrm A^j
\end{equation}
and therefore
\begin{equation}
\label{EQ: uinuwneun29838n92niJNIHNdqw}
X=\sum_{j=1}^L \langle X, \sigma_q^*(\mathrm A^j) \rangle\ \rho_q(\mathrm A^j).
\end{equation}
Now using the identity $\rho_q \circ \sigma_q=id_{T_q\mathcal N}$ we express
\begin{equation}
\begin{aligned}
\sigma_q^*(\mathrm A^j) 
&= \rho_q \circ \sigma_q ( \sigma_q^* (\mathrm A^j))
= \rho_q ( \sigma_q \circ \sigma_q^* (\mathrm A^j))
= \rho_q \left( \sum_{i=1}^L \big\langle\sigma_q \circ \sigma_q^* (\mathrm A^j),\mathrm A^i \big\rangle \mathrm A^i \right) \\
&= \rho_q \left( \sum_{i=1}^L \big\langle \sigma_q^* (\mathrm A^j),\sigma_q^*(\mathrm A^i) \big\rangle \mathrm A^i \right)
=  \sum_{i=1}^L \big\langle \sigma_q^* (\mathrm A^j),\sigma_q^*(\mathrm A^i) \big\rangle\ \rho_q (\mathrm A^i)
\end{aligned}
\end{equation}
Going back to \eqref{EQ: uinuwneun29838n92niJNIHNdqw} we have found
\begin{equation}
\begin{aligned}
X&=\sum_{j=1}^L \langle X, \sigma_q^*(\mathrm A^j) \rangle\ \rho_q(\mathrm A^j)
=\sum_{j=1}^L \left\langle X, \sum_{i=1}^L \big\langle \sigma_q^* (\mathrm A^j),\sigma_q^*(\mathrm A^i) \big\rangle\ \rho_q (\mathrm A^i) \right\rangle\ \rho_q(\mathrm A^j) \\
&=\sum_{i,j=1}^L \big\langle \sigma_q^* (\mathrm A^j),\sigma_q^*(\mathrm A^i) \big\rangle\left\langle X, \rho_q (\mathrm A^i) \right\rangle\ \rho_q(\mathrm A^j) \\
&=\sum_{i=1}^L \left\langle X, \rho_q (\mathrm A^i) \right\rangle \sum_{j=1}^L \big\langle \sigma_q^* (\mathrm A^j),\sigma_q^*(\mathrm A^i) \big\rangle\ \rho_q(\mathrm A^j) \\
&=\sum_{i=1}^L \left\langle X, \rho_q (\mathrm A^i) \right\rangle Y^i.
\end{aligned}
\end{equation}

\end{proof}

\begin{lemma}[Conservation Law]
\label{LEMMA: Conservation Law Homo spaces}
Let $u \in W^{1,p}(\Sigma;\mathcal N)$ be a $p$-harmonic map, $p>2$.
Then for any $\mathrm A\in \mathfrak g \subset \mathfrak{so}(m)$ there holds
\begin{equation}
\divergence\left((1+\abs{\nabla u}^2)^{\frac{p}{2}-1} \langle\nabla u, \mathrm Au\rangle \right)=0.
\end{equation}
\end{lemma}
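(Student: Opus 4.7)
The plan is to derive the claimed conservation law by contracting the Euler--Lagrange equation \eqref{EQ: Euler-Lagrange equations in div form for p harm} with the infinitesimal variation $\mathrm A u$, which is the Noether current associated to the one-parameter subgroup $\{\exp(t\mathrm A)\}_{t\in\R}\subset G$ acting on the target. Heuristically, since $E_p$ is invariant under isometries of $\mathcal N$, one expects any such current to be conserved; the calculation below makes this explicit and uses only (i) the Euler--Lagrange equation, (ii) the tangency $\mathrm A u=\rho_u(\mathrm A)\in T_u\mathcal N$ supplied by the first lemma of this subsection, and (iii) the antisymmetry of $\mathrm A\in\mathfrak g\subset\mathfrak{so}(m)$.

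First, I would take the Euclidean inner product of both sides of \eqref{EQ: Euler-Lagrange equations in div form for p harm} with $\mathrm A u$, obtaining
\begin{equation*}
-\bigl\langle \divergence\bigl((1+\abs{\nabla u}^2)^{p/2-1}\nabla u\bigr), \mathrm A u\bigr\rangle = (1+\abs{\nabla u}^2)^{p/2-1}\bigl\langle \mathbb I_u(\nabla u,\nabla u), \mathrm A u\bigr\rangle.
\end{equation*}
The right-hand side vanishes pointwise: $\mathrm A u\in T_u\mathcal N$ by the previous lemma, whereas $\mathbb I_u(\nabla u,\nabla u)$ takes values in the normal space $(T_u\mathcal N)^\perp$ by definition of the second fundamental form, and tangent and normal directions are orthogonal.

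Next, I would apply the Leibniz rule to the divergence on the left-hand side. Since $\mathrm A$ is a constant matrix, $\nabla(\mathrm A u)=\mathrm A\nabla u$, so the identity above can be rewritten as
\begin{equation*}
\divergence\bigl((1+\abs{\nabla u}^2)^{p/2-1}\langle\nabla u, \mathrm A u\rangle\bigr) = (1+\abs{\nabla u}^2)^{p/2-1}\sum_{\alpha}\langle\partial_\alpha u,\mathrm A\partial_\alpha u\rangle,
\end{equation*}
where $\alpha$ runs over a local coordinate frame on $\Sigma$. Finally, antisymmetry $\mathrm A^T=-\mathrm A$ gives $\langle v,\mathrm A v\rangle=0$ for every $v\in\R^m$; applying this termwise to $v=\partial_\alpha u$ forces each summand on the right to vanish pointwise, yielding the stated conservation law.

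I do not anticipate any substantive obstacle: the argument is short, pointwise, and purely algebraic. The only mild technicality is that $u$ lies only in $W^{1,p}$, but since $\mathcal N$ is compact and $p>2$, the flux $(1+\abs{\nabla u}^2)^{p/2-1}\langle\nabla u,\mathrm A u\rangle$ is in $L^{p/(p-1)}_{\mathrm{loc}}(\Sigma)$, so the divergence identity is to be interpreted in $\mathcal D'(\Sigma)$ and the computation above can be justified by testing against a smooth compactly supported function and integrating by parts (equivalently, by a standard mollification argument).
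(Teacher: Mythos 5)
Your proposal is correct and follows essentially the same argument as the paper: contract the Euler--Lagrange equation with the Noether vector field $\mathrm Au\in T_u\mathcal N$, observe that the second fundamental form term is normal and hence pairs to zero, expand the divergence by the Leibniz rule, and kill the remaining term by antisymmetry of $\mathrm A$. The only cosmetic difference is that the paper restates the tangency of $\mathrm Au$ via a short path argument inside the proof rather than citing the earlier lemma, and does not spell out the distributional justification you mention, but the substance is identical.
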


\begin{proof}
As $\mathrm A\in \mathfrak g=T_{id}G$ we can find a path $Q:(-\epsilon,\epsilon)\rightarrow G$ such that $Q(0)=id$ and $Q^\prime(0)=\mathrm A$.
Let $\gamma:(-\epsilon,\epsilon)\rightarrow \mathcal N;$ $\gamma(t)=Q(t)u(x)$.
Then $\mathrm Au(x)=Q^\prime(0)u(x)=\gamma^\prime(0)\in T_u\mathcal N$.
Furthermore, since $u$ is a $p$-harmonic map we have that $\divergence((1+\abs{\nabla u}^2)^{\frac{p}{2}-1} \nabla u ) \in (T_u\mathcal N)^\perp$.
This gives
\begin{equation}
\begin{aligned}
0&=\left\langle \divergence((1+\abs{\nabla u}^2)^{\frac{p}{2}-1} \nabla u ) , \mathrm Au \right\rangle \\
&=\divergence\left( \left\langle (1+\abs{\nabla u}^2)^{\frac{p}{2}-1} \nabla u  , \mathrm Au \right\rangle \right)
-(1+\abs{\nabla u}^2)^{\frac{p}{2}-1} \underbrace{\left\langle  \nabla u , \mathrm A \nabla u \right\rangle}_{=0},
\end{aligned}
\end{equation}
where we used that $\mathrm A$ is anti-symmetric and hence $v^T\mathrm Av=0$, for all $v\in \R^m$.
\end{proof}

In the sphere case $\mathcal N=S^n$ we have that $G=O(n+1)$ and hence $\mathfrak g=\mathfrak{so}(n+1)$.
For any fixed $i,j=1,\dots,n+1$ define the matrix
\begin{equation}
\mathrm A_{\alpha\beta}=\begin{cases}
1, &\text{ if } (\alpha,\beta)=(i,j), \\
-1, &\text{ if } (\alpha,\beta)=(j,i), \\
0, &\text{ else, }
\end{cases}
\end{equation}
Then $\mathrm A \in \mathfrak g=\mathfrak{so}(n+1)$ and hence we recover the conservation law
\begin{equation}
\divergence\left((1+\abs{\nabla u}^2)^{\frac{p}{2}-1}(u \wedge\nabla u)\right)
= \divergence\left((1+\abs{\nabla u}^2)^{\frac{p}{2}-1} \langle\nabla u, \mathrm Au\rangle \right)=0.
\footnote{Here we use the notation $(u \wedge\nabla u)_{i,j}= \nabla u_i u_j - \nabla u_j u_i $.}
\end{equation}

\begin{theorem} [Conservation Law]
\label{THEOREM: Conservation Law p harm map equation}
Let $\mathrm A^1,\dots,\mathrm A^L$ and $Y^1,\dots,Y^L$ be as in Lemma \ref{LEMMA: Frame for the tangent space}.
Let $u \in W^{1,p}(\Sigma;\mathcal N)$ be a $p$-harmonic map, $p>2$.
Then $u$ satisfies the conservation law
\begin{equation}
-\divergence((1+\abs{\nabla u}^2)^{\frac{p}{2}-1} \nabla u )
= \sum_{i=1}^L \nabla^\perp\mathrm  B^i \cdot \nabla \Upsilon^i,
\end{equation}
where $\nabla^\perp\mathrm  B^i \coloneqq -(1+\abs{\nabla u}^2)^{\frac{p}{2}-1} \langle\nabla u, \mathrm A^iu\rangle$ and $\Upsilon^i \coloneqq Y^i\circ u$.
We remark that for some constant $C=C(\mathcal N)>0$ one has the point wise bounds
\begin{equation}
\abs{\nabla\mathrm  B^i} \le C (1+\abs{\nabla u}^2)^{\frac{p}{2}-1} \abs{\nabla u},
\qquad \text{ and } \qquad
\abs{\nabla \Upsilon^i} \le C  \abs{\nabla u}.
\end{equation}
\end{theorem}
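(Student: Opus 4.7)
The plan is to combine the frame decomposition from Lemma \ref{LEMMA: Frame for the tangent space} with the scalar conservation laws from Lemma \ref{LEMMA: Conservation Law Homo spaces}. The key observation is that, working in local isothermal coordinates on $\Sigma$, each partial derivative $\partial_\alpha u(x)$ lies in $T_{u(x)}\mathcal N$, so the frame lemma can be applied direction by direction to the vector field $\nabla u$ itself. This yields the pointwise identity
\begin{equation*}
\nabla u = \sum_{i=1}^L \langle \mathrm A^i u, \nabla u\rangle\, \Upsilon^i,
\end{equation*}
with $\Upsilon^i=Y^i\circ u$.

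Next, I would multiply this identity by $(1+\abs{\nabla u}^2)^{p/2-1}$ to get
\begin{equation*}
(1+\abs{\nabla u}^2)^{p/2-1}\nabla u = \sum_{i=1}^L \Big[(1+\abs{\nabla u}^2)^{p/2-1}\langle \mathrm A^i u, \nabla u\rangle\Big]\Upsilon^i .
\end{equation*}
By Lemma \ref{LEMMA: Conservation Law Homo spaces}, applied to each $\mathrm A^i\in\mathfrak g$, the vector field in brackets is divergence-free, so Poincar\'e's lemma (locally, on simply connected charts of $\Sigma$) furnishes scalar potentials $\mathrm B^i$ with $\nabla^\perp \mathrm B^i = -(1+\abs{\nabla u}^2)^{p/2-1}\langle \nabla u, \mathrm A^i u\rangle$. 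Substituting gives
\begin{equation*}
(1+\abs{\nabla u}^2)^{p/2-1}\nabla u = -\sum_{i=1}^L (\nabla^\perp \mathrm B^i)\,\Upsilon^i.
\end{equation*}

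Taking the divergence of both sides (componentwise in $\R^m$) and using the Leibniz rule, the term $\divergence(\nabla^\perp \mathrm B^i)=0$ drops out, leaving
\begin{equation*}
-\divergence\bigl((1+\abs{\nabla u}^2)^{p/2-1}\nabla u\bigr) = \sum_{i=1}^L \nabla^\perp \mathrm B^i\cdot \nabla \Upsilon^i,
\end{equation*}
which is the claimed conservation law. The pointwise bounds are immediate: $\abs{\mathrm A^i u}\le \|\mathrm A^i\|\,\|u\|_\infty$ is controlled since $\mathcal N$ is compact, giving $\abs{\nabla \mathrm B^i}\le C(1+\abs{\nabla u}^2)^{p/2-1}\abs{\nabla u}$, and $\abs{\nabla\Upsilon^i}\le \|dY^i\|_\infty\abs{\nabla u}\le C\abs{\nabla u}$ by smoothness of the frame fields on the compact target.

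I do not anticipate a genuine obstacle; the argument is a structural rewriting and all difficult ingredients (transitive isometric action, equivariant embedding, construction of the frame, and the scalar conservation laws) have already been assembled. The only mildly delicate point is the passage from the divergence-free vector field to its potential $\mathrm B^i$, which is only defined up to a global topological obstruction on $\Sigma$; this is harmless since the statement only involves $\nabla^\perp \mathrm B^i$, and in applications (e.g.\ the subsequent neck analysis) one works on disks or annuli where the potential exists globally.
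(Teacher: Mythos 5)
Your proof is correct and follows essentially the same route as the paper: the paper applies Lemma \ref{LEMMA: Frame for the tangent space} directly to the vector field $X=(1+\abs{\nabla u}^2)^{p/2-1}\nabla u$, expands the divergence by Leibniz, and kills the first term with Lemma \ref{LEMMA: Conservation Law Homo spaces}, which is the same manipulation you perform after the cosmetic rearrangement of factoring out the scalar $(1+\abs{\nabla u}^2)^{p/2-1}$. Your extra remark about the potential $\mathrm B^i$ being only locally defined is a reasonable clarification of a point the paper leaves implicit.
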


\begin{proof}
Considering $X=(1+\abs{\nabla u}^2)^{\frac{p}{2}-1} \nabla u \in T_u\mathcal N$ in Lemma \ref{LEMMA: Frame for the tangent space} we find with Lemma \ref{LEMMA: Conservation Law Homo spaces}
\begin{equation}
\begin{aligned}
&\hspace{-5mm} -\divergence((1+\abs{\nabla u}^2)^{\frac{p}{2}-1} \nabla u ) \\
&= -\divergence\left( \sum_{i=1}^L \langle (1+\abs{\nabla u}^2)^{\frac{p}{2}-1} \nabla u, \mathrm A^iu \rangle Y^i(u) \right) \\
&= \sum_{i=1}^L  -\underbrace{\divergence \Big(\langle (1+\abs{\nabla u}^2)^{\frac{p}{2}-1} \nabla u, \mathrm A^iu \rangle\Big) }_{=0} \ Y^i(u)
- \langle (1+\abs{\nabla u}^2)^{\frac{p}{2}-1} \nabla u, \mathrm A^iu \rangle \cdot \nabla (Y^i(u)) \\
&= \sum_{i=1}^L \nabla^\perp\mathrm  B^i \cdot \nabla \Upsilon^i.
\end{aligned}
\end{equation}
\end{proof}

\subsection{The Second Variation and the Morse Index}

Following the computations carried out in \cite{DLRS25} (for conformally invariant Lagrangians see also \cite{DGR22}) but in the case of a general target $\mathcal N$ we find the following definitions for the second variation and the Morse index.

\begin{definition} [Morse index of $p$-harmonic maps]
\label{DEFINITION: Morse index and Nullityof p-harmonic maps}
Let $u \in W^{1,p}(\Sigma;\mathcal N)$ be a $p$-harmonic map.
Then we introduce the space of variations as
\begin{equation}
\label{EQ: wigunUBNI8193hf23f}
V_u = \Gamma(u^{-1} T\mathcal N)= \left\{ w \in W^{1,2}(\Sigma; \R^{m}) \forwhich w(x) \in T_{u(x)} \mathcal N, \quad \text{for a.e. } x\in\Sigma \right\}.
\end{equation}
The second variation is given by $Q_u:V_u \rightarrow\R$,
\begin{equation}
\begin{aligned}
Q_u(w) 
&\coloneqq p\ (p-2) \int_\Sigma \left( 1+\abs{\nabla u}^2\right)^{p/2-2} \left(\nabla u \cdot \nabla w \right)^2  \ dvol_\Si \\
&\hspace{20mm}+p \int_\Sigma \left( 1+\abs{\nabla u}^2\right)^{p/2-1} \left[\abs{\nabla w}^2 - \mathbb I_u(\nabla u, \nabla u) \cdot \mathbb I_u(w,w) \right] \ dvol_\Si.
\end{aligned}
\end{equation}
The \underline{Morse index} of $u$ relative to the energy $E_p(u)$:  
\begin{equation}
\operatorname{Ind}_{E_p}(u):=\max \left\{\operatorname{dim}(W) ; W\right. \text{ is a sub vector space of } V_u \text{ s.t. } \left.\left.Q_u\right|_{W\setminus\{0\}}<0\right\}
\end{equation}
and the \underline{Nullity} of $u$ to be
\begin{equation}
\operatorname{Null}_{E_p}(u):= \dim\big(\ker Q_u \big).
\end{equation}
\end{definition}

\subsection{Setting of the Problem}
\label{SUBSECTION: Setting of the Problem}

In this section we introduce the setting and the notations used during the remaining of the paper.
We will follow the strategies introduced in \cite{DLRS25} but adapting the Wente structure to accommodate the conservation law we got in Theorem \ref{THEOREM: Conservation Law p harm map equation}.
Let $p_k>2$, $k \in \N$, be a sequence of exponents with 
\begin{equation}
p_k \searrow 2, \qquad \text{ as } k \rightarrow \infty.
\end{equation}
and let  
 $u_k \in W^{1,p_k}(\Sigma;\mathcal N)$ be a sequence of $p_k$-harmonic maps with uniformly bounded energy, i.e. 
\begin{equation}
\label{EQ: Uniform bound on the energy 8912H2egBeZ}
\sup_k E_{p_k}(u_k) = \sup_k \int_\Sigma (1+\abs{\nabla u_k}^2)^{\frac{p_k}{2}} dvol_{\Sigma}<\infty.
\end{equation}
Thanks to a classical result in concentration compactness theory, see for instance \cite{SU81}, we know that the sequence will converge up to subsequences strongly to a harmonic map away from a finite set of blow up points, where bubbles start to form while passing to the limit.
For our purposes it suffices to consider the simplified case of a single blow up point with only one bubble.
In this case we have the following

\begin{definition} [Bubble tree convergence with one bubble]
\label{Definition: Bubble tree convergence of p harm map with one bubble}
We say that the sequence $u_k$ bubble tree converges to a harmonic map and one single bubble if the following happens:
There exist harmonic maps $u_\infty \in W^{1,2}(\Sigma;\mathcal N)$ and $v_\infty \in W^{1,2}(\C;\mathcal N)$, a sequence of radii $(\delta_k)_{k \in \N} \subset \R_{>0}$, a sequence of points $(x_k)_{k \in \N}\subset\Sigma$ and a blow up point $q\in \Sigma$ such that
\begin{equation}
\label{EQ: Cond of bubb conv ji0wr931jJGA}
\begin{aligned}
&\bullet \quad u_k \rightarrow u_\infty, \qquad \text{ in } C^\infty_{loc}(\Sigma \setminus \{q\}), \text{ as } k \rightarrow \infty, \\
&\bullet \quad v_k(z) \coloneqq u_k\left(x_k + \delta_k z \right) \rightarrow v_\infty(z), \qquad \text{ in } C^\infty_{loc}(\C), \text{ as } k \rightarrow \infty, \\
&\bullet \quad \lim _{\eta \searrow 0} \limsup_{k \rightarrow\infty} \sup _{\delta_k / \eta<\rho<2 \rho<\eta} \int_{B_{2 \rho}\left(x_k\right) \backslash B_\rho\left(x_k\right)}\left|\nabla u_k\right|^2 d v o l_\Sigma=0,
\end{aligned}
\end{equation}
where in the second line $u_k(\cdot)$ is to be understood on a fixed conformal chart around the point $q$ and also
\begin{equation}
\label{EQ: Bubb conv 2}
 x_k \rightarrow q, \quad \de_k \rightarrow 0, \qquad \text{ as } k \rightarrow \infty.
\end{equation}
\end{definition}

\noindent

Henceforward, we will assume that we are in the setting of Definition \ref{Definition: Bubble tree convergence of p harm map with one bubble}.
Furthermore, we are working in a fixed conformal chart around the point $q$ centered at the origin and parametrized by the unit ball $B_1=B_1(0)$.
Also for the sake of simplicity $x_k=0=q$ for any $k\in\N$. \\
We consider the vector field 
\begin{equation}\label{Xk}
X_{k}=(1+\abs{\nabla u_k}^2)^{\frac{p_k}{2}-1} \nabla u_k \in L^{p_k^\prime}(B_1),~~p_k^\prime=\frac{p_k}{p_k-1},
\end{equation}
which satisfies by \eqref{EQ: Euler-Lagrange equations in div form for p harm} the equation
\begin{equation}\label{eqXk}
-\divergence(X_k)= (1+\abs{\nabla u_k}^2)^{\frac{p_k}{2}-1}\  \mathbb{I}_{u_k}\big(\nabla u_k , \nabla u_k\big) \qquad \text{ in } B_1.
\end{equation}
Let $\mathrm A^1,\dots,\mathrm A^L$ be an orthonormal basis of $\mathfrak g$ (with respect to the inner product in $\R^{m\times m}$) and let $Y^1,\dots,Y^L\in \Gamma (T\mathcal N)$ be the smooth vector fields constructed in Lemma \ref{LEMMA: Frame for the tangent space}.
Applying Theorem \ref{THEOREM: Conservation Law p harm map equation} for $k \in \N$ we find 
\begin{equation}
-\divergence(X_k)
= \sum_{i=1}^L \nabla^\perp\mathrm  B_{\eta,k}^i \cdot \nabla \Upsilon_{\eta,k}^i,
\end{equation}
where
\begin{equation}
\label{EQ: Equation for nabla B def}
\nabla^\perp\mathrm  B_{\eta,k}^i = -(1+\abs{\nabla u_k}^2)^{\frac{p_k}{2}-1} \langle\nabla u_k, \mathrm A^iu_k\rangle,
\qquad \text{ and } \qquad
\Upsilon_{\eta,k}^i=Y^i\circ u_k,
\end{equation}
with $\eta>0$.
(Here we are using the subscript $\eta$ for consistency of notation, although non of the quantities has any dependance on it.)
We remark that 
For some constant $C=C(\mathcal N)>0$ (depending only on the embedding of $\mathcal N$) one has the point wise bounds
\begin{equation}
\label{EQ: inwfo9n2038njJNUDW223d}
\abs{\nabla\mathrm B_{\eta,k}^i} \le C (1+\abs{\nabla u_k}^2)^{\frac{p_k}{2}-1} \abs{\nabla u_k},
\qquad \text{ and } \qquad
\abs{\nabla \Upsilon_{\eta,k}^i} \le C  \abs{\nabla u_k}.
\end{equation}
Given $\eta\in (0,1)$, and $k \in \N$, we consider the annulus 
\begin{equation}
A(\eta, \delta_k)\coloneqq B_{\eta}(0)\setminus \overline{B}_{\delta_k/\eta}(0),
\end{equation}
which is called neck-region.
Combining H\"older, \eqref{EQ: Uniform bound on the energy 8912H2egBeZ} and \eqref{EQ: inwfo9n2038njJNUDW223d} we can bound
\begin{equation}
\label{EQ: estimate of B 980jt4unJUN}
\begin{aligned}
\norm{\nabla \mathrm B_{\eta,k}^i}_{L^{p_k^\prime}(A(\eta,\delta_k))} 
\le C \norm{\nabla u_k}_{L^{p_k}(A(\eta,\delta_k))}
, \qquad
\norm{\nabla \mathrm \Upsilon_{\eta,k}^i}_{L^{p_k}(A(\eta,\delta_k))} 
\le C \norm{\nabla u_k}_{L^{p_k}(A(\eta,\delta_k))}.
\end{aligned}
\end{equation}
We use the Hodge/Helmholtz-Weyl Decomposition from Lemma A.6 in \cite{DLRS25} on the domain $\Omega=B_1$ to find some $a,b \in W^{1,p_k^\prime}(B_1)$ such that
\begin{equation}\label{decXk}
X_k= \nabla a_{\eta,k} + \nabla^{\perp}b_{\eta,k}\quad \text{ in } B_1
\end{equation}
and with $\partial_{\tau} b_{\eta,k}=0$ on $\partial B_1$.
We get the equation
\begin{equation}
-\Delta a_{\eta,k}=-\divergence(X_k) = \sum_{i=1}^L \nabla^\perp\mathrm  B_{\eta,k}^i \cdot \nabla \Upsilon_{\eta,k}^i \qquad \text{ in } B_1.
\end{equation}
Let $\widetilde u_k$ be the Whitney extension to $\C$ of $u_k|_{A(\eta,\delta_k)}$ coming from Lemma A.1 of \cite{DLRS25} with
\begin{equation}
\label{EQ: jnfemorjsfJMNi29d91}
\begin{aligned}
\norm{\nabla \widetilde u_k}_{L^{p_k}(\C)} 
\le C\norm{\nabla  u_k}_{L^{p_k}(A(\eta,\delta_k))}
\end{aligned}
\end{equation}
and also
\begin{equation}
\label{EQ: wuinuifnvuUNIDWU0139jr1}
\supp (\nabla \widetilde u_k) \subset A(2\eta,\delta_k).
\end{equation}
Letting $\widetilde \Upsilon_{\eta,k}^i\coloneqq Y_i\circ \widetilde u_k$ we also find
\begin{equation}
\label{EQ: iunfr9e902309niJNI93sJM2jms}
\begin{aligned}
\abs{\nabla \widetilde \Upsilon_{\eta,k}^i} \le C \abs{\nabla \widetilde u_k}
\end{aligned}
\end{equation}
and also
\begin{equation}
\supp (\nabla \widetilde \Upsilon_{\eta,k}^i) \subset A(2\eta,\delta_k).
\end{equation}
Let $\mathrm{\widetilde B}_{\eta,k}^i$ be the Whitney extensions to $\C$ of $\mathrm B_{\eta,k}^i|_{A(\eta,\delta_k)}$ coming from Lemma A.1 of \cite{DLRS25} with
\begin{equation}
\label{EQ: Control of tilde B uhni893hre}
\norm{\nabla \mathrm{\widetilde B}_{\eta,k}^i}_{L^{p_k^\prime}(\C)} 
\le C\norm{\nabla  \mathrm B_{\eta,k}^i}_{L^{p_k^\prime}(A(\eta,\delta_k))}
\end{equation}
and also
\begin{equation}
\supp (\nabla \mathrm{\widetilde B}_{\eta,k}^i) \subset A(2\eta,\delta_k).
\end{equation}
For $i=1,\dots,L$ let $\varphi_{\eta,k}^i\in W^{1,2}(\C)$ be the solution of
\begin{equation}
\label{EQ: Eq for De phi}
-\Delta \varphi_{\eta,k}^i = \nabla^\perp\mathrm{\widetilde B}_{\eta,k}^i \cdot \nabla \widetilde \Upsilon_{\eta,k}^i \qquad \text{ in } \C.
\end{equation}
Letting $\varphi_{\eta,k}\coloneqq \sum_{i=1}^L \varphi_{\eta,k}^i$ we have
\begin{equation}
\label{EQ: Eq for De phi sumed}
-\Delta \varphi_{\eta,k} = \sum_{i=1}^L \nabla^\perp\mathrm{\widetilde B}_{\eta,k}^i \cdot \nabla \widetilde \Upsilon_{\eta,k}^i \qquad \text{ in } \C.
\end{equation}
Now set
\begin{equation}
\mathfrak h_{\eta,k}=a_{\eta,k} - \varphi_{\eta,k} \qquad \text{ in } B_1.
\end{equation}
Clearly, $\mathfrak h_{\eta,k}$ is harmonic in $A(\eta,\delta_k)$.
Now we decompose the harmonic part $\mathfrak h_{\eta,k}$ as follows:
\begin{equation}
\mathfrak h_{\eta,k} =\mathfrak h_{\eta,k}^+ + \mathfrak h_{\eta,k}^- + \mathfrak h_{\eta,k}^0 \qquad \text{ in } A(\eta,\delta_k),
\end{equation}
where
\begin{equation}
\mathfrak h_{\eta,k}^+ = \Re \left(\sum_{l>0} h_l^k z^l \right), \qquad
\mathfrak h_{\eta,k}^- = \Re \left(\sum_{l<0} h_l^k z^l \right), \qquad
\mathfrak h^0_{\eta,k} = h^k_0 + C^k_0 \log\abs{z}.
\end{equation}
From \eqref{decXk} we get the decomposition
\begin{equation}
\label{EQ: Final decomp of na u}
X_k= \nabla^\perp b_{\eta,k}+\nabla \varphi_{\eta,k}+ \nabla \mathfrak h_{\eta,k}^+ + \nabla \mathfrak h_{\eta,k}^- \qquad \text{ in } A(\eta,\delta_k).
\end{equation}

\begin{lemma}
There holds $C^k_0=0$ and hence $\nabla \mathfrak h_{\eta,k}^0=0$.
\end{lemma}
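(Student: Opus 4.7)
The plan is to express $C^k_0$ as the flux of $\nabla \mathfrak h_{\eta,k}$ through any circle inside the annulus and then to show that this flux vanishes thanks to the conservation law in Theorem \ref{THEOREM: Conservation Law p harm map equation} together with the agreement of the Whitney extensions with the originals on $A(\eta,\delta_k)$. Since $\mathfrak h_{\eta,k}$ is harmonic on $A(\eta,\delta_k)$ with logarithmic coefficient $C^k_0$ in its Laurent expansion, for every $r\in(\delta_k/\eta,\eta)$ one has
\begin{equation*}
2\pi\, C^k_0 \;=\; \int_{\partial B_r}\partial_\nu \mathfrak h_{\eta,k}\,d\sigma \;=\; \int_{\partial B_r}\partial_\nu a_{\eta,k}\,d\sigma \;-\; \int_{\partial B_r}\partial_\nu \varphi_{\eta,k}\,d\sigma,
\end{equation*}
since the $z^l$-terms with $l\neq 0$ have vanishing angular averages and constants contribute nothing.

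For the first flux the Hodge decomposition \eqref{decXk} gives $\partial_\nu a_{\eta,k} = X_k\cdot\nu - \partial_\tau b_{\eta,k}$; the loop integral of $\partial_\tau b_{\eta,k}$ is zero because $b_{\eta,k}$ is a single-valued function on $B_1$, so by the divergence theorem and Theorem \ref{THEOREM: Conservation Law p harm map equation},
\begin{equation*}
\int_{\partial B_r}\partial_\nu a_{\eta,k}\,d\sigma \;=\; \int_{B_r}\divergence(X_k)\,dx \;=\; -\sum_{i=1}^L\int_{B_r}\nabla^\perp \mathrm B^i_{\eta,k}\cdot \nabla \Upsilon^i_{\eta,k}\,dx.
\end{equation*}
Integrating \eqref{EQ: Eq for De phi sumed} over $B_r$ yields the analogous identity
\begin{equation*}
\int_{\partial B_r}\partial_\nu \varphi_{\eta,k}\,d\sigma \;=\; -\sum_{i=1}^L\int_{B_r}\nabla^\perp \widetilde{\mathrm B}^i_{\eta,k}\cdot \nabla \widetilde\Upsilon^i_{\eta,k}\,dx.
\end{equation*}

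The decisive step is to rewrite each bulk Jacobian as a line integral through the pointwise identity $\nabla^\perp B\cdot \nabla \Upsilon = \divergence(\Upsilon\, \nabla^\perp B)$, which is immediate from $\divergence\circ\nabla^\perp \equiv 0$. After applying the divergence theorem on $B_r$, both volume integrals become line integrals of $\Upsilon^i\,(\nabla^\perp \mathrm B^i)\cdot \nu$ (with and without tildes) along $\partial B_r$. Since $\partial B_r\subset A(\eta,\delta_k)$ is precisely the set where the Whitney extensions coincide with the originals, that is $\widetilde{\mathrm B}^i_{\eta,k}=\mathrm B^i_{\eta,k}$ and $\widetilde\Upsilon^i_{\eta,k}=\Upsilon^i_{\eta,k}$ there, the two boundary integrals are identical and cancel in $2\pi C^k_0$, giving $C^k_0=0$. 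The only technical point I expect is justifying the divergence theorem at the available Sobolev regularity of $a_{\eta,k}$, $\varphi_{\eta,k}$ and the extensions, which is standard once one approximates by smooth maps on the smooth interior curve $\partial B_r$ and uses that the relevant traces are well defined.
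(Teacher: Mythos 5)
Your proposal is correct and takes essentially the same route as the paper: both express $2\pi C_0^k$ as the difference of the fluxes of $\nabla a_{\eta,k}$ and $\nabla\varphi_{\eta,k}$ through $\partial B_r$, rewrite the div--curl Jacobian via $\nabla^\perp \mathrm B\cdot\nabla\Upsilon=\divergence(\Upsilon\,\nabla^\perp\mathrm B)$, pass to boundary integrals on $\partial B_r\subset A(\eta,\delta_k)$, and use that the Whitney extensions coincide with the original quantities there. Your extra detour through the Hodge decomposition of $X_k$ is unnecessary (one can go directly from $\int_{\partial B_r}\partial_\nu a_{\eta,k}$ to $\int_{B_r}\Delta a_{\eta,k}$), but it is harmless and reaches the same place.
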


\begin{proof}
Let $r \in \left(\frac{\delta_k}{\eta},\eta \right)$.
Then
\begin{equation}
\label{EQ: 2r381brufr1}
\int_{B_r} \De \mathfrak h_{\eta,k} \ dz
= \int_{B_r} \divergence \nabla \mathfrak h_{\eta,k} \ dz
= \int_{\partial B_r} \partial_\nu \mathfrak h_{\eta,k}^+ \ d\sigma
+ \int_{\partial B_r} \partial_\nu \mathfrak h_{\eta,k}^- \ d\sigma
+ \int_{\partial B_r} \partial_\nu \mathfrak h_{\eta,k}^0 \ d\sigma
\end{equation}
Now we compute
\begin{equation}
\label{EQ: 8ru81bf81b}
\int_{\partial B_r} \partial_\nu \mathfrak h_{\eta,k}^+ \ d\sigma = 0 =
\int_{\partial B_r} \partial_\nu \mathfrak h_{\eta,k}^- \ d\sigma.
\end{equation}
Furthermore, 
\begin{equation}
\label{EQ: Comp flux of h0 edjn293}
\int_{\partial B_r} \partial_\nu \mathfrak h_{\eta,k}^0 \ d\sigma =C^k_0 \int_{\partial B_r} \frac{1}{r} \ d\sigma = 2\pi C^k_0.
\end{equation}
Combining \eqref{EQ: 2r381brufr1}, \eqref{EQ: 8ru81bf81b} and \eqref{EQ: Comp flux of h0 edjn293} we find
\begin{equation}\label{EQ: unifw81389BN92IJ4127dudhAWQKGX}
\begin{aligned}
C^k_0 
= \frac{1}{2\pi} \int_{B_r} \De \mathfrak h_{\eta,k} \ dz = \frac{1}{2\pi} \int_{B_r} \De a_{\eta,k} - \De \varphi_{\eta,k} \ dz
\end{aligned}
\end{equation}
Now we compute
\begin{equation}
\begin{aligned}
\int_{B_r} \De a_{\eta,k} \ dz
&= \sum_{i=1}^L \int_{B_r} \nabla^\perp\mathrm  B_{\eta,k}^i \cdot \nabla \Upsilon_{\eta,k}^i \ dz \\
&= \sum_{i=1}^L \int_{B_r} \divergence\left( \nabla^\perp\mathrm  B_{\eta,k}^i  \Upsilon_{\eta,k}^i\right) \ dz \\
&= \sum_{i=1}^L \int_{\partial B_r} \left( \nabla^\perp\mathrm  B_{\eta,k}^i  \Upsilon_{\eta,k}^i\right) \cdot \nu \ d\sigma \\
&= \sum_{i=1}^L \int_{\partial B_r} \left(\nabla^\perp\mathrm{\widetilde B}_{\eta,k}^i \widetilde \Upsilon_{\eta,k}^i\right) \cdot \nu \ d\sigma \\
&= \sum_{i=1}^L \int_{B_r} \divergence\left( \nabla^\perp\mathrm{\widetilde B}_{\eta,k}^i \widetilde \Upsilon_{\eta,k}^i \right) \ dz \\
&= \sum_{i=1}^L \int_{B_r} \nabla^\perp\mathrm{ \widetilde B}_{\eta,k}^i \cdot \nabla \widetilde\Upsilon_{\eta,k}^i \ dz
= \int_{B_r} \De \varphi_{\eta,k} \ dz
\end{aligned}
\end{equation}
Going back to \eqref{EQ: unifw81389BN92IJ4127dudhAWQKGX} the claim follows.
\end{proof}

\vspace{10mm}
\section{Energy quantization}

In this section we adapt the proof of the $L^{2,1}$-energy quantization from \cite{DLRS25} (see also \cite{DGR22}) to the case of a homogeneous manifold in the target.
The $L^{2,1}$-energy quantization for Sacks-Uhlenbeck sequences in the sphere case is due to \cite{LZ19} and was extended to the setting of homogeneous manifolds in \cite{BR25}.
They were using a different method, which involves a direct cut-off argument on the boundaries of the necks and the application of Wente's inequality.
Our method involves the Whitney type extensions introduced in Section \ref{SUBSECTION: Setting of the Problem} and weighted Wente type inequalities.
The $L^{2,1}$-energy quantization derived in this section is used to obtain the pointwise bound of the gradient in the neck regions in Section \ref{SECTION: Pointwise Control of the Gradient in the Neck Regions}.\medskip

For arguments that are the same as in the sphere case and are rather standard in the literature, we will refer to \cite{DLRS25} and omit carrying out the proof. 
\medskip

The $L^{2,\infty}$-energy quatization is a direct consequence of $\varepsilon$-regularity Lemma \ref{LEMMA: epsilon reg sacks uhlenbeck for p harm}:

\begin{lemma}[$L^{2,\infty}$-energy quantization]\label{THM: L2infty en quant of na uk}
There holds
\begin{equation}
\lim_{\eta\searrow0}\limsup_{k\rightarrow\infty}\norm{\nabla u_k}_{L^{2,\infty}(A(\eta,\delta_k))}=0.
\end{equation}
\end{lemma}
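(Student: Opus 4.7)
The plan is to derive a pointwise bound of the form $|\nabla u_k(x)| \le C\sqrt{\mathfrak e(\eta,k)}/|x|$ on $A(\eta,\delta_k)$ via Lemma~\ref{LEMMA: epsilon reg sacks uhlenbeck for p harm}, where $\mathfrak e(\eta,k)$ is the supremum of energies on dyadic annuli in the neck, and then translate this pointwise decay into an $L^{2,\infty}$-bound using the fact that $|x|^{-1}$ lies in $L^{2,\infty}(\R^2)$.

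First, let $\varepsilon_0 > 0$ and $C > 0$ be as in Lemma~\ref{LEMMA: epsilon reg sacks uhlenbeck for p harm}, and define
\begin{equation*}
\mathfrak e(\eta,k) \coloneqq \sup_{\delta_k/\eta < \rho < \eta/2} \int_{B_{2\rho}(0)\setminus B_\rho(0)} |\nabla u_k|^2 \, dvol_\Sigma.
\end{equation*}
The third bullet of \eqref{EQ: Cond of bubb conv ji0wr931jJGA} gives $\limsup_{k\to\infty}\mathfrak e(\eta,k) \to 0$ as $\eta \searrow 0$, so for $\eta$ small and $k$ large we may assume $\mathfrak e(\eta,k) < \varepsilon_0/4$.

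Second, I would establish the pointwise estimate. Fix $x$ with $r \coloneqq |x| \in (2\delta_k/\eta,\eta/2)$. Since $B_{r/2}(x) \subset B_{3r/2}(0)\setminus B_{r/2}(0)$, which is covered by the two dyadic annuli $B_{2r}\setminus B_r$ and $B_r \setminus B_{r/2}$, we obtain
\begin{equation*}
\int_{B_{r/2}(x)} |\nabla u_k|^2 \, dvol_\Sigma \le 2\, \mathfrak e(\eta,k) < \varepsilon_0.
\end{equation*}
Applying Lemma~\ref{LEMMA: epsilon reg sacks uhlenbeck for p harm} to the ball $B_{r/2}(x)$ yields
\begin{equation*}
|\nabla u_k(x)| \le \|\nabla u_k\|_{L^\infty(B_{r/4}(x))} \le \frac{2C}{r}\, \|\nabla u_k\|_{L^2(B_{r/2}(x))} \le \frac{C'\sqrt{\mathfrak e(\eta,k)}}{|x|}.
\end{equation*}
The boundary strips $|x|\in(\delta_k/\eta,2\delta_k/\eta)$ and $|x|\in(\eta/2,\eta)$ that are excluded can be handled identically after shrinking $\eta$ by a constant factor, which is absorbed into the final limit $\eta \searrow 0$.

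Third, I would compute the $L^{2,\infty}$-norm directly from the distribution function. Given $t > 0$, the pointwise bound implies
\begin{equation*}
\bigl|\{x \in A(\eta,\delta_k) \,:\, |\nabla u_k(x)| > t\}\bigr| \le \bigl|\{x \in \R^2 \,:\, C'\sqrt{\mathfrak e(\eta,k)}/|x| > t\}\bigr| \le \frac{\pi (C')^2 \mathfrak e(\eta,k)}{t^2},
\end{equation*}
and therefore
\begin{equation*}
\|\nabla u_k\|_{L^{2,\infty}(A(\eta,\delta_k))} \le C' \sqrt{\pi\,\mathfrak e(\eta,k)}.
\end{equation*}
Taking $\limsup_{k \to \infty}$ followed by $\eta \searrow 0$ gives the conclusion.

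No genuine obstacle appears in this argument: everything reduces to classical $\varepsilon$-regularity together with the standard observation that a radial $1/|x|$-profile sits exactly on the borderline of $L^2$ but is comfortably in weak $L^2$ on the plane. The only bookkeeping point is that the pointwise estimate a priori holds on a slightly smaller neck (with $\eta$ replaced by a fixed fraction of itself), but this is absorbed when the outer limit $\eta \searrow 0$ is taken.
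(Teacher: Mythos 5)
Your proposal is correct and takes essentially the same route as the paper: $\varepsilon$-regularity on dyadic annuli to obtain the pointwise bound $|\nabla u_k(x)|\lesssim \sqrt{\mathfrak e(\eta,k)}/|x|$, combined with the third bullet of \eqref{EQ: Cond of bubb conv ji0wr931jJGA} and the fact that $|x|^{-1}\in L^{2,\infty}(\R^2)$. The paper's proof is stated tersely and defers details to \cite{DLRS25}, but the mechanism is identical to what you wrote.
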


\begin{proof}
By $\varepsilon$-regularity Lemma \ref{LEMMA: epsilon reg sacks uhlenbeck for p harm} it is clear that $\abs{\nabla u_k(x)} \le C \abs{x}^{-1}\norm{\nabla u_k}_{L^2(B_{\abs{x}/4}(x))}$. One concludes using $\abs{x}^{-1} \in L^{2,\infty}$ and \eqref{EQ: Cond of bubb conv ji0wr931jJGA}.
For more details see Theorem 3.2 in \cite{DLRS25}.
\end{proof}

Recall the decomposition constructed in \eqref{EQ: Final decomp of na u}.

\begin{lemma}\label{LEMMA: L21 of na h}
There holds
\begin{equation}
\lim_{\eta\searrow0}\limsup_{k\rightarrow\infty} \norm{\nabla \mathfrak h_{\eta,k}^\pm}_{L^{2,1}(A(\eta,\delta_k))}=0.
\end{equation}
\end{lemma}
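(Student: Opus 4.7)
The plan is to reduce the $L^{2,1}$-smallness of $\nabla\mathfrak h_{\eta,k}^\pm$ on the full neck $A(\eta,\delta_k)$ to an $L^2$-smallness on a thin dyadic collar adjacent to one of the two boundary circles, and then to extract that collar smallness from the Hodge decomposition \eqref{EQ: Final decomp of na u} together with a Wente-type estimate for $\varphi_{\eta,k}$. The key structural input is the Laurent expansion: since $\mathfrak h_{\eta,k}^+=\Re(\sum_{l>0}h^k_l z^l)$ contains only strictly positive powers, $\abs{\nabla z^l}=l\abs{z}^{l-1}$ is monotone in $\abs{z}$, and a direct computation of $\norm{\nabla z^l}_{L^{2,1}(A(\eta,\delta_k))}$ combined with $L^2$-orthogonality of distinct monomials on circles should yield, uniformly in $\eta$ and $k$,
\begin{equation*}
\norm{\nabla\mathfrak h_{\eta,k}^+}_{L^{2,1}(A(\eta,\delta_k))}\le C\,\norm{\nabla\mathfrak h_{\eta,k}^+}_{L^{2}(A(\eta,\eta/2))},
\end{equation*}
together with the symmetric inequality for $\mathfrak h_{\eta,k}^-$ on the inner collar $A(2\delta_k/\eta,\delta_k/\eta)$.

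On the outer collar, \eqref{EQ: Final decomp of na u} reads $\nabla\mathfrak h_{\eta,k}^+=X_k-\nabla^\perp b_{\eta,k}-\nabla\varphi_{\eta,k}-\nabla\mathfrak h_{\eta,k}^-$. The term $X_k$ is controlled pointwise by $(1+\abs{\nabla u_k}^2)^{(p_k-2)/2}\abs{\nabla u_k}$, and its $L^2$-norm on $A(\eta,\eta/2)$ vanishes in the double limit by the third bullet of \eqref{EQ: Cond of bubb conv ji0wr931jJGA} applied to this dyadic annulus. The piece $\nabla\varphi_{\eta,k}$ is controlled globally on $\C$ via an improved Wente estimate applied to the Jacobian-type right-hand side of \eqref{EQ: Eq for De phi sumed}; combined with \eqref{EQ: Control of tilde B uhni893hre}, \eqref{EQ: jnfemorjsfJMNi29d91} and Lemma \ref{THM: L2infty en quant of na uk}, this should yield $\norm{\nabla\varphi_{\eta,k}}_{L^{2,1}(\C)}\to 0$. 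The term $\nabla\mathfrak h_{\eta,k}^-$ is small on the outer collar by a monomial-by-monomial estimate, its Laurent modes $z^l$, $l<0$, being concentrated near the inner boundary. The divergence-free piece $\nabla^\perp b_{\eta,k}$ on the outer collar is absorbed by writing $\nabla^\perp b_{\eta,k}=X_k-\nabla\varphi_{\eta,k}-\nabla\mathfrak h_{\eta,k}^+-\nabla\mathfrak h_{\eta,k}^-$ and using the global $L^2$-orthogonality of the Hodge decomposition on $B_1$. Repeating the argument on the inner dyadic collar, which likewise has vanishing $L^2$-energy of $\nabla u_k$, completes the proof for $\mathfrak h_{\eta,k}^-$.

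The main obstacle is the $L^{2,1}$-Wente control of $\varphi_{\eta,k}$: the extended factor $\nabla\mathrm{\widetilde B}_{\eta,k}^i$ naturally lives in $L^{p_k'}$ with $p_k'>2$ rather than exactly in $L^2$, so the classical Coifman--Lions--Meyer--Semmes estimate does not apply verbatim. I would circumvent this by Lorentz-space interpolation between the $L^{2,\infty}$-smallness of Lemma \ref{THM: L2infty en quant of na uk} and the uniform $L^{p_k}$-bound following from \eqref{EQ: Uniform bound on the energy 8912H2egBeZ}, or by a weighted Wente inequality in the spirit of \cite{DLRS25,DGR22}, paying a $(p_k-2)$-cost that gets absorbed in the $k\to\infty$ limit. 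Aside from this analytic point, the remainder of the argument reduces to the monomial bookkeeping for the harmonic part and the dyadic energy smallness provided by the bubble-tree convergence.
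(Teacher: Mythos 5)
The paper dispatches this lemma with a one-line citation to Lemma III.3 of \cite{DGR22}, so there is no explicit proof to compare against line by line; what follows measures your proposal against the mechanism of that reference, adapted to the present $p$-harmonic setting. Your opening move — reducing $\norm{\nabla\mathfrak h_{\eta,k}^{+}}_{L^{2,1}(A(\eta,\delta_k))}$ to $\norm{\nabla\mathfrak h_{\eta,k}^{+}}_{L^2(A(\eta,\eta/2))}$ via the radial monotonicity of $\abs{\nabla z^l}$, the $L^2$-orthogonality of the gradients of distinct Laurent modes on circles, and the resulting geometric decay of $\norm{\nabla\mathfrak h_{\eta,k}^{+}}_{L^2(A_j)}$ away from the outer boundary — is correct and is precisely the skeleton of the cited argument. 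The problems lie in your collar estimates.

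First, the assertion that $\norm{\nabla\varphi_{\eta,k}}_{L^{2,1}(\C)}\to0$ is circular at this point of the paper. Wente gives only $\norm{\nabla\varphi_{\eta,k}}_{L^{2,1}(\C)}\le C\norm{\nabla\widetilde{\mathrm B}_{\eta,k}}_{L^2}\norm{\nabla\widetilde\Upsilon_{\eta,k}}_{L^2}$, whose right-hand side is uniformly bounded but not small; its smallness is exactly the $L^2$-energy quantization of Theorem \ref{THM: L2 en quant of na u}, which itself uses the present lemma as input. Interpolating the $L^{2,\infty}$-smallness of $\nabla u_k$ with an $L^{p_k}$-bound does not yield $L^{2,1}$-smallness of $\nabla\varphi_{\eta,k}$ either (the improved Wente estimates go from $L^2\times L^2$ to $L^{2,1}$, not from $L^{2,\infty}\times L^{p}$). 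What actually works — and what \cite{DGR22} uses — is the weighted Wente inequality localized to the dyadic collar: this gives $\norm{\nabla\varphi_{\eta,k}}_{L^2(A(\eta,\eta/2))}\to0$ directly from the no-dyadic-concentration condition in \eqref{EQ: Cond of bubb conv ji0wr931jJGA}, with no need for any global quantization. You mention this as a fallback, but do not carry it out, and your primary route fails. Second, your treatment of $\nabla^\perp b_{\eta,k}$ is genuinely circular: rewriting $\nabla^\perp b_{\eta,k}=X_k-\nabla\varphi_{\eta,k}-\nabla\mathfrak h_{\eta,k}^{+}-\nabla\mathfrak h_{\eta,k}^{-}$ reintroduces the very term $\nabla\mathfrak h_{\eta,k}^{+}$ you are trying to bound, and global $L^2$-orthogonality of the Hodge decomposition on $B_1$ gives no local control on a collar. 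In the harmonic-map setting of \cite{DGR22} this term simply does not appear (there $X_k=\nabla u_k$ is already a gradient), so the cited lemma cannot handle it; in the present $p$-harmonic setting the correct and nontrivial ingredient is the commutator estimate $\norm{\nabla b_{\eta,k}}_{L^{p_k^\prime}(B_1)}\le C(p_k-2)$ of Lemma \ref{LEMMA: First est of nabla b in necks}, which your proposal does not invoke.
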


\begin{proof}
The proof is the same as   in Lemma III.3 of \cite{DGR22} and we omit it.
\end{proof}

\begin{lemma}
\label{LEMMA: First est of nabla b in necks}
For $k\in \N$ large and $\eta>0$  small one has
\begin{equation}
\label{EQ: HIninfwjei921ejir2}
\norm{\nabla b_{\eta,k}}_{L^{p_k^\prime}(B_1)} \le C (p_k-2)
\end{equation}
\end{lemma}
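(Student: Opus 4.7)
My plan is a short duality argument followed by an elementary calculus inequality that cleanly extracts the factor $p_k-2$. Writing $f_k:=(1+|\nabla u_k|^2)^{p_k/2-1}$ so that $X_k=\nabla u_k+(f_k-1)\nabla u_k$, I first reduce the estimate to a bound on the \emph{nonlinear remainder} $(f_k-1)\nabla u_k$. Componentwise, $\nabla u_k^\alpha$ is curl-free, and its Hodge decomposition with the same boundary condition as in Lemma A.6 of \cite{DLRS25} has vanishing $\nabla^\perp$-part. By linearity and uniqueness of the Hodge decomposition, the $\nabla^\perp$-part of $(f_k-1)\nabla u_k^\alpha$ coincides with $\nabla^\perp b_{\eta,k}^\alpha$; the $L^{p_k'}$-continuity of the Hodge projector on $B_1$ (whose operator norm stays uniformly bounded as $p_k\searrow 2$) then yields
\begin{equation*}
\|\nabla b_{\eta,k}\|_{L^{p_k'}(B_1)} \le C\,\|(f_k-1)\nabla u_k\|_{L^{p_k'}(B_1)}.
\end{equation*}
Equivalently, one may test the equation $\Delta b_{\eta,k}^\alpha=\curl(X_k^\alpha)$ against $\phi\in W_0^{1,p_k}(B_1)$ and observe that the pure gradient contribution $\int_{B_1}\nabla u_k^\alpha\cdot\nabla^\perp\phi$ vanishes after integration by parts because $\phi|_{\partial B_1}=0$.

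Next, setting $\lambda:=p_k/2-1$, the identity
\begin{equation*}
(1+t)^\lambda-1=\int_0^\lambda(1+t)^s\log(1+t)\,ds\ \le\ \lambda\,(1+t)^\lambda\log(1+t),
\end{equation*}
valid for $t\ge 0$ by monotonicity of $s\mapsto(1+t)^s$, produces the pointwise bound
\begin{equation*}
|(f_k-1)\nabla u_k|\ \le\ \tfrac{p_k-2}{2}\,|X_k|\,\log(1+|\nabla u_k|^2),
\end{equation*}
which cleanly pulls out the factor $(p_k-2)$.

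It then remains to verify that $\bigl\||X_k|\log(1+|\nabla u_k|^2)\bigr\|_{L^{p_k'}(B_1)}$ is uniformly bounded in $k$. A direct computation gives $|X_k|^{p_k'}\le(1+|\nabla u_k|^2)^{p_k/2}$; combining this with $\log^{p_k'}(1+t)\le C_\delta(1+t)^{\delta p_k'}$ (valid for arbitrary $\delta>0$), the task reduces to the uniform bound
\begin{equation*}
\int_{B_1}(1+|\nabla u_k|^2)^{p_k/2+\delta p_k'}\,dx\ \le\ C
\end{equation*}
for some small $\delta>0$. This is a mild higher integrability of the Sacks--Uhlenbeck sequence, obtainable from a Gehring-type self-improvement of a Caccioppoli inequality derived from the $p_k$-harmonic equation together with the uniform energy bound \eqref{EQ: Uniform bound on the energy 8912H2egBeZ}, exactly as in the sphere case of \cite{DLRS25}. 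The principal technical hurdle is verifying that both the higher-integrability exponent and its constant stay uniform as $p_k\searrow 2$; everything else is routine.
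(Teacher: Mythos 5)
Your initial reduction is sound: the observation that the $\nabla^\perp$-part of $\nabla u_k^\alpha$ vanishes (because $\nabla u_k^\alpha$ is curl-free and the boundary condition forces the harmonic $b$ to be constant), hence $\nabla^\perp b_{\eta,k}$ equals the Hodge $\nabla^\perp$-projection of $(f_k-1)\nabla u_k$, and the calculus inequality $|(f_k-1)\nabla u_k|\le\tfrac{p_k-2}{2}|X_k|\log(1+|\nabla u_k|^2)$, are both correct. However, the argument collapses at the final step: the uniform higher integrability
\begin{equation*}
\int_{B_1}(1+|\nabla u_k|^2)^{p_k/2+\delta p_k^\prime}\,dx\le C
\end{equation*}
for some fixed $\delta>0$ is \emph{false} in the bubbling regime, and no Gehring-type self-improvement can restore it. To see this, rescale near the concentration point: writing $v_k(z)=u_k(\delta_k z)\to v_\infty$ in $C^\infty_{loc}$ with $v_\infty$ nonconstant, one gets
\begin{equation*}
\int_{B_{\delta_k/\eta}}(1+|\nabla u_k|^2)^{p_k/2+\delta p_k^\prime}\,dx
\approx \delta_k^{\,2-p_k-2\delta p_k^\prime}\int_{B_{1/\eta}}|\nabla v_k|^{\,p_k+2\delta p_k^\prime}\,dz
\gtrsim \delta_k^{-2\delta p_k^\prime}\longrightarrow\infty.
\end{equation*}
The uniform energy bound only controls $\delta_k^{2-p_k}$, not the extra factor $\delta_k^{-2\delta p_k^\prime}$. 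Since the exponent $\delta$ in any Gehring/reverse-H\"older improvement must be held fixed (otherwise the constant $C_\delta$ in $\log^{p_k^\prime}(1+t)\le C_\delta(1+t)^{\delta p_k^\prime}$ blows up), the scheme cannot work. Incidentally, the claim that \cite{DLRS25} proceeds ``exactly'' via a Caccioppoli plus Gehring argument is a misattribution.

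There is also a structural problem even modulo the higher integrability: the quantity $\||X_k|\log(1+|\nabla u_k|^2)\|_{L^{p_k^\prime}(B_1)}$ in fact grows like $\log(1/\delta_k)$ near the bubble, so after multiplying by $\tfrac{p_k-2}{2}$ one would, at best, obtain a bound of order $(p_k-2)\log(1/\delta_k)$, which is only $O(1)$ rather than the required $O(p_k-2)$. (One cannot invoke the entropy relation $(p_k-2)\log(1/\delta_k)\to0$ here, as that is proved later in the paper, \emph{using} the present lemma.) The paper instead writes $\nabla^\perp b_{\eta,k}$ as a commutator $[T,S_{p_k}](\nabla u_k)$ with the Hodge projector $T$ and the nonlinear multiplier $S_{p_k}$, then applies a Coifman--Rochberg--Weiss / Rochberg--Weiss-type commutator estimate (Lemma A.5 of \cite{BR19}) whose very structure produces the linear factor $(p_k-2)$ via differentiation of an analytic family of operators at $p=2$, with no higher integrability needed. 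That operator-theoretic mechanism is the essential ingredient your argument is missing.
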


\begin{proof}
This proof is the same as in Lemma 3.4 of \cite{DLRS25}.
For $p>2$ we consider the operator
\begin{equation}
S_p(f)\coloneqq \left[\frac{1+|f|^2}{1+\|f\|_{L^p(B_1)}^2}\right]^{\frac{p}{2}-1} f
\end{equation}
and let $T(f)=\nabla^\perp B$, where $f=\nabla A + \nabla^\perp B$ and $\partial_\tau B=0$ is the Hodge/Helmholtz-Weyl Decomposition of $f$ as e.g. in Lemma A.6 in \cite{DLRS25}.
Then we can apply Coifman-Rochberg-Weiss commutator type Lemma A.5 of \cite{BR19} and use \eqref{EQ: Uniform bound on the energy 8912H2egBeZ} to derive
\begin{equation}
\norm{\nabla b_{\eta,k}}_{L^{p_k^\prime}(B_1)}
\le C \norm{\left[T,S_{p_k}\right](\nabla u_k)}_{{L^{p_k^\prime}(B_1)}}
\le C (p_k-2).
\end{equation}
For the full intermediate computations see Lemma 3.4 of \cite{DLRS25}.

\end{proof}

\begin{lemma} \label{LEMMA: L21 of na b}
For $\eta>0$ we have
\begin{equation}
\label{EQ: L21 est of nabla b eta k}
\lim_{k \rightarrow\infty} \norm{\abs{\nabla b_{\eta,k}}^{1/(p_k-1)}}_{L^{2,1}(B_1)}=0.
\end{equation}
\end{lemma}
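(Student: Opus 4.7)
The plan is to bootstrap Lemma \ref{LEMMA: First est of nabla b in necks}, which provides $\norm{\nabla b_{\eta,k}}_{L^{p_k^\prime}(B_1)} \le C(p_k-2)$, to the desired Lorentz estimate for the power $\abs{\nabla b_{\eta,k}}^{1/(p_k-1)}$. The key observation is that raising $\abs{\nabla b_{\eta,k}}$ to the exponent $1/(p_k-1)$ converts $L^{p_k^\prime}$-integrability into $L^{p_k}$-integrability, placing us in the regime $p_k>2$ where $L^{p_k}(B_1)\hookrightarrow L^{2,1}(B_1)$ holds; the only subtle question is whether the degenerating embedding constant is compensated by the smallness of the $L^{p_k}$ norm coming from Lemma \ref{LEMMA: First est of nabla b in necks}.

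The argument proceeds in two steps. First, the elementary identity $(\abs{\nabla b_{\eta,k}}^{1/(p_k-1)})^{p_k}=\abs{\nabla b_{\eta,k}}^{p_k^\prime}$ combined with Lemma \ref{LEMMA: First est of nabla b in necks} gives
\begin{equation}
\norm{\abs{\nabla b_{\eta,k}}^{1/(p_k-1)}}_{L^{p_k}(B_1)} = \norm{\nabla b_{\eta,k}}_{L^{p_k^\prime}(B_1)}^{1/(p_k-1)} \le [C(p_k-2)]^{1/(p_k-1)}.
\end{equation}
Second, using the rearrangement characterization $\norm{g}_{L^{2,1}(B_1)}\sim\int_0^{|B_1|} g^*(t)\, t^{-1/2}\,dt$ and H\"older's inequality with conjugate exponents $p_k$ and $p_k^\prime$ on the interval $(0,|B_1|)$, I will establish the (non-uniform) embedding
\begin{equation}
\norm{g}_{L^{2,1}(B_1)} \le C\, \norm{g}_{L^{p_k}(B_1)} \cdot \left(\int_0^{|B_1|} t^{-p_k^\prime/2}\, dt\right)^{1/p_k^\prime},
\end{equation}
where the weight integral is finite precisely because $p_k^\prime<2$, and an explicit calculation shows that it behaves like $(p_k-2)^{-1/p_k^\prime}$ as $p_k\searrow 2$. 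Combining the two estimates yields
\begin{equation}
\norm{\abs{\nabla b_{\eta,k}}^{1/(p_k-1)}}_{L^{2,1}(B_1)} \le C\,(p_k-2)^{1/(p_k-1)\, -\, 1/p_k^\prime}.
\end{equation}

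The main (and essentially only) point left is to verify that the resulting exponent is strictly positive and bounded below as $p_k\searrow 2$. Since $1/(p_k-1)\to 1$ while $1/p_k^\prime=(p_k-1)/p_k\to 1/2$, the exponent tends to $1/2$, so for $p_k$ sufficiently close to $2$ it remains $\ge 1/4$, and the right-hand side tends to zero. Conceptually, the proof reduces to a clean balance: the embedding $L^{p_k}(B_1)\hookrightarrow L^{2,1}(B_1)$ degenerates at the rate $(p_k-2)^{-1/2}$, while the $L^{p_k}$-norm of $\abs{\nabla b_{\eta,k}}^{1/(p_k-1)}$ decays at the rate $(p_k-2)^{1}$, producing the net gain $(p_k-2)^{1/2}\to 0$. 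No additional Wente-type structure or conservation-law input is required here, because all the nonlinear geometric content has already been absorbed into the commutator estimate of Lemma \ref{LEMMA: First est of nabla b in necks}.
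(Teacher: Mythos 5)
Your proposal is correct and follows essentially the same route as the paper: Lemma \ref{LEMMA: First est of nabla b in necks} supplies the $L^{p_k^\prime}$ control, and the key computation is H\"older's inequality applied to the rearrangement characterisation of $L^{2,1}$ with exponents $p_k$, $p_k^\prime$, tracking the exact $(p_k-2)^{-1/p_k^\prime}$ degeneration of the embedding constant so as to balance it against the $(p_k-2)^{1/(p_k-1)}$ smallness coming from the commutator estimate. This is precisely what the paper means by ``H\"older's inequality, computing there the exact constant'' (the details being deferred to Lemma 3.5 of \cite{DLRS25}), and your exponent bookkeeping $1/(p_k-1)-1/p_k^\prime \to 1/2 >0$ is accurate.
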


\begin{proof}
This result follows by using H\"older's inequality, computing there the exact constant and using Lemma \ref{LEMMA: First est of nabla b in necks}.
For all the details see Lemma 3.5 in \cite{DLRS25}.
\end{proof}

We can finally show,

\begin{theorem}[$L^2$-energy quantization]
\label{THM: L2 en quant of na u}
There holds
\begin{equation}
\lim_{\eta\searrow0}\limsup_{k\rightarrow\infty}\norm{\nabla u_k}_{L^{2}(A(\eta,\delta_k))}=0.
\end{equation}
\end{theorem}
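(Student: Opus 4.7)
The plan is to test the decomposition \eqref{EQ: Final decomp of na u} of $X_k$ against $\nabla u_k$ in the neck $A(\eta,\delta_k)$, and then to bound each of the four resulting terms either by H\"older in $L^{p_k}$--$L^{p_k'}$ or by $L^{2,1}$--$L^{2,\infty}$ duality. The starting point is the elementary pointwise inequality
\begin{equation*}
\abs{\nabla u_k}^2 \leq \left(1+\abs{\nabla u_k}^2\right)^{\frac{p_k}{2}-1}\abs{\nabla u_k}^2 = X_k\cdot \nabla u_k,
\end{equation*}
valid for $p_k\geq 2$, which, combined with \eqref{EQ: Final decomp of na u}, gives
\begin{equation*}
\int_{A(\eta,\delta_k)}\abs{\nabla u_k}^2\ dx \leq \int_{A(\eta,\delta_k)}\left(\nabla^\perp b_{\eta,k}+\nabla \varphi_{\eta,k}+\nabla \mathfrak h_{\eta,k}^{+}+\nabla \mathfrak h_{\eta,k}^{-}\right)\cdot \nabla u_k\ dx.
\end{equation*}

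For the $b$-term I would simply use H\"older with dual exponents $(p_k',p_k)$, together with the uniform energy bound \eqref{EQ: Uniform bound on the energy 8912H2egBeZ} and Lemma \ref{LEMMA: First est of nabla b in necks}, to obtain
\begin{equation*}
\Bigl|\int_{A(\eta,\delta_k)} \nabla^\perp b_{\eta,k}\cdot \nabla u_k\Bigr| \leq \norm{\nabla b_{\eta,k}}_{L^{p_k'}(B_1)}\norm{\nabla u_k}_{L^{p_k}(A(\eta,\delta_k))} \leq C(p_k-2)\longrightarrow 0.
\end{equation*}
For each of the harmonic pieces $\mathfrak h_{\eta,k}^{\pm}$, the $L^{2,1}$--$L^{2,\infty}$ duality combined with Lemmas \ref{THM: L2infty en quant of na uk} and \ref{LEMMA: L21 of na h} yields
\begin{equation*}
\Bigl|\int_{A(\eta,\delta_k)} \nabla \mathfrak h_{\eta,k}^{\pm}\cdot \nabla u_k\Bigr| \leq C\,\norm{\nabla \mathfrak h_{\eta,k}^{\pm}}_{L^{2,1}(A(\eta,\delta_k))}\norm{\nabla u_k}_{L^{2,\infty}(A(\eta,\delta_k))}\longrightarrow 0
\end{equation*}
after taking $k\to\infty$ and then $\eta\searrow 0$.

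The main obstacle is controlling the Wente piece $\nabla \varphi_{\eta,k}$. Here I would again pair with $\nabla u_k$ in $L^{2,\infty}$, so the task reduces to establishing a uniform bound $\norm{\nabla \varphi_{\eta,k}}_{L^{2,1}(\C)}\leq C$ for the solution of \eqref{EQ: Eq for De phi sumed}. To this end, one invokes a Coifman--Lions--Meyer--Semmes/Wente-type estimate: since $\nabla^\perp \widetilde{\mathrm B}_{\eta,k}^i$ is divergence-free and lies in $L^{p_k'}(\C)$, while $\nabla \widetilde\Upsilon_{\eta,k}^i$ lies in $L^{p_k}(\C)$ with $1/p_k+1/p_k'=1$, the right-hand side of \eqref{EQ: Eq for De phi} belongs to the Hardy space $\mathcal H^1(\C)$ with norm controlled, via \eqref{EQ: jnfemorjsfJMNi29d91}, \eqref{EQ: iunfr9e902309niJNI93sJM2jms}, \eqref{EQ: Control of tilde B uhni893hre} and the uniform energy bound, by a constant independent of $k$ and $\eta$. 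Inverting the Laplacian then promotes $\nabla \varphi_{\eta,k}$ to $L^{2,1}(\C)$ uniformly. This is essentially the same mechanism used in the sphere case in \cite{DLRS25}; the only genuinely new input for the homogeneous target is the substitution $\widetilde\Upsilon_{\eta,k}^i = Y^i\circ \widetilde u_k$ in place of $\widetilde u_k$, and the pointwise bound \eqref{EQ: iunfr9e902309niJNI93sJM2jms} ensures that this substitution costs only a multiplicative constant depending on $\mathcal N$.

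Combining the four estimates and letting first $k\to\infty$ and then $\eta\searrow 0$ gives the claim. The delicate point to handle carefully in the full proof is the asymmetric Wente-type bound in the $L^{p_k}$--$L^{p_k'}$ setting, since one must make sure that the constant in the Hardy space estimate is stable as $p_k\searrow 2$; once this is in place, every other step is a direct application of H\"older or of $L^{2,1}$--$L^{2,\infty}$ duality and of the already-established quantization Lemmas \ref{THM: L2infty en quant of na uk}, \ref{LEMMA: L21 of na h}, and \ref{LEMMA: First est of nabla b in necks}.
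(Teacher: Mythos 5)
Your argument is correct, and it takes a genuinely more elementary route than the paper's. The paper never writes the bilinear pairing $\int X_k\cdot\nabla u_k$; instead it works throughout with the nonlinear quantity $|X_k|^{1/(p_k-1)}$, using the pointwise bound $|X_k|^{1/(p_k-1)}\le 1+|\nabla u_k|$ and the subadditivity of the concave power $t\mapsto t^{1/(p_k-1)}$ to push $L^{2,\infty}$-smallness through the decomposition \eqref{EQ: Final decomp of na u} onto $|\nabla\varphi_{\eta,k}|^{1/(p_k-1)}$, and then interpolates $L^2$-smallness from $L^{2,\infty}$-smallness and $L^{2,1}$-boundedness of that quantity before finally using $|\nabla u_k|\le |X_k|^{1/(p_k-1)}$. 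You instead linearise: from $|\nabla u_k|^2 \le X_k\cdot\nabla u_k$ you reduce the claim to four bilinear pairings of $\nabla u_k$ against the pieces of $X_k$, each of which you kill by $L^{p_k'}$--$L^{p_k}$ H\"older (the $b$-term, via Lemma \ref{LEMMA: First est of nabla b in necks}) or $L^{2,1}$--$L^{2,\infty}$ Lorentz duality (the $\mathfrak h^\pm$- and $\varphi$-terms, via Lemmas \ref{LEMMA: L21 of na h}, \ref{THM: L2infty en quant of na uk} and the uniform Wente bound \eqref{EQ: uZG813g87UH}). Your version avoids the concavity trick entirely and is shorter; the paper's formulation is chosen so that the $L^{2,1}$-estimates on $|X_k|^{1/(p_k-1)}$ assembled here are recycled verbatim in the next proof (the $L^{2,1}$-energy quantization), where your linearisation has no direct analogue and one would have to reintroduce the nonlinear step anyway. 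Your worry about the Wente constant being uniform as $p_k\searrow 2$ is well-placed but is indeed harmless: the div-curl product maps the dual pair $L^{p_k'}\times L^{p_k}$ into $\mathcal H^1(\C)$ with a constant independent of the exponent, and $(-\Delta)^{-1}\colon \mathcal H^1(\C)\to \dot W^{1,(2,1)}(\C)$ is a fixed bounded operator, exactly as the paper's inequality \eqref{EQ: uZG813g87UH} records.
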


\begin{proof}
We follow closley the proof of Theorem 3.5 in \cite{DLRS25} but adapt it to accomodate the conservation law coming from Theorem \ref{THEOREM: Conservation Law p harm map equation}.
One can estimate
\begin{equation}
\begin{aligned}
\norm{\abs{X_k}^{\frac{1}{p_k-1}}}_{L^{2,\infty}(A(\eta,\delta_k))}
\le \norm{(1+\abs{\nabla u_k})}_{L^{2,\infty}(A(\eta,\delta_k))} 
\le C \left( \eta + \norm{\nabla u_k}_{L^{2,\infty}(A(\eta,\delta_k))} \right)
\end{aligned}
\end{equation}
and thus with Theorem \ref{THM: L2infty en quant of na uk}
\begin{equation}
\label{EQ: L21 of Xk ifn3N51nM}
\lim_{\eta\searrow0}\limsup_{k\rightarrow\infty}\norm{\abs{X_k}^{\frac{1}{p_k-1}}}_{L^{2,\infty}(A(\eta,\delta_k))}=0.
\end{equation}
Following the computation as in the proof of Theorem 3.5 in \cite{DLRS25} one has for any function $f$ on a bounded domain $\Omega$ and any $p>2$
\begin{equation}
\label{EQ: ZBU3rZfi8U9djogqw}
\begin{aligned}
\norm{f^{\frac{1}{p-1}}}_{L^{2,1}(\Omega)}
\le 2\ \abs{\Omega}^{\frac{1}{2}} + \frac{2}{p-1} \norm{f}_{L^{2,1}(\Omega)}.
\end{aligned}
\end{equation}
Combining \eqref{EQ: ZBU3rZfi8U9djogqw} with Lemma \ref{LEMMA: L21 of na h} we obtain
\begin{equation}
\label{EQ: L21 skewed na h ZH82bn9Ns1}
\lim_{\eta\searrow0}\limsup_{k\rightarrow\infty} \norm{\abs{\nabla \mathfrak h_{\eta,k}^\pm}^{\frac{1}{p_k-1}}}_{L^{2,1}(A(\eta,\delta_k))}=0.
\end{equation}
Going back to the decomposition \eqref{EQ: Final decomp of na u} and using Lemma \ref{LEMMA: L21 of na b}, \eqref{EQ: L21 of Xk ifn3N51nM}, \eqref{EQ: L21 skewed na h ZH82bn9Ns1}, we find
\begin{equation}
\label{EQ: NUunf81b1dvwdJ12J}
\lim_{\eta\searrow0}\limsup_{k\rightarrow\infty} \norm{\abs{\nabla \varphi_{\eta,k}}^{\frac{1}{p_k-1}}}_{L^{2,\infty}(A(\eta,\delta_k))}=0.
\end{equation}
Using Wente's inequality with \eqref{EQ: Eq for De phi} and also \eqref{EQ: jnfemorjsfJMNi29d91}, \eqref{EQ: estimate of B 980jt4unJUN}, \eqref{EQ: Control of tilde B uhni893hre}, \eqref{EQ: Uniform bound on the energy 8912H2egBeZ} one finds
\begin{equation}
\label{EQ: uZG813g87UH}
\norm{\nabla \varphi_{\eta,k}}_{L^{2,1}(\C)} 
\le C \sum_{i=1}^L \norm{\nabla\widetilde{\mathrm B}_{\eta,k}^i}_{L^{p_k^\prime}(\C)} \norm{\nabla\widetilde \Upsilon_{\eta,k}^i}_{L^{p_k}(\C)} 
\le C \norm{\nabla u_{k}}_{L^{p_k}(A(\eta,\delta_k))}^2
\le C.
\end{equation}
Combining \eqref{EQ: uZG813g87UH} with \eqref{EQ: ZBU3rZfi8U9djogqw}
we find
\begin{equation}
\label{EQ: Ifw8193rH23nN}
\norm{\abs{\nabla \varphi_{\eta,k}}^{\frac{1}{p_k-1}}}_{L^{2,1}(A(\eta,\delta_k))} \le C.
\end{equation}
 By H\"older's inequality in Lorentz spaces
\begin{equation}
\norm{\abs{\nabla \varphi_{\eta,k}}^{\frac{1}{p_k-1}}}_{L^{2}(A(\eta,\delta_k))}
\le \norm{\abs{\nabla \varphi_{\eta,k}}^{\frac{1}{p_k-1}}}_{L^{2, \infty}(A(\eta,\delta_k))} \norm{\abs{\nabla \varphi_{\eta,k}}^{\frac{1}{p_k-1}}}_{L^{2, 1}(A(\eta,\delta_k))}
\end{equation}
Hence, using \eqref{EQ: Ifw8193rH23nN} and \eqref{EQ: NUunf81b1dvwdJ12J} we get
\begin{equation}
\label{EQ: UNuwf83hn1H12fd}
\lim_{\eta\searrow0}\limsup_{k\rightarrow\infty} \norm{\abs{\nabla \varphi_{\eta,k}}^{\frac{1}{p_k-1}}}_{L^{2}(A(\eta,\delta_k))}=0.
\end{equation}
Going back to the decomposition \eqref{EQ: Final decomp of na u} and using Lemma \ref{LEMMA: L21 of na b}, \eqref{EQ: L21 skewed na h ZH82bn9Ns1} and \eqref{EQ: UNuwf83hn1H12fd} we obtain
\begin{equation}
\lim_{\eta\searrow0}\limsup_{k\rightarrow\infty}\norm{\abs{X_{k}}^{\frac{1}{p_k-1}}}_{L^{2}(A(\eta,\delta_k))}=0.
\end{equation}
The bound $\abs{\nabla u_k}\le \abs{X_k}^{\frac{1}{p_k-1}}$ gives the claimed result.
\end{proof}

\begin{lemma} \label{LEMMA: Linfty bd on na u in the necks unif}
There is a constant $C>0$ such that for $k\in\N$ large there holds
\begin{equation}\label{linftyuk}
\norm{\left(1+\abs{\nabla u_k}^2 \right)^{\frac{p_k}{2}-1}}_{L^\infty(\Sigma)} \le C.
\end{equation}
\end{lemma}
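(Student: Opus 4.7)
The plan is to partition $\Sigma$ using the bubble tree decomposition and control $(1+|\nabla u_k|^2)^{p_k/2-1}$ in each piece. Since $p_k/2-1\searrow 0$, the target bound is equivalent to a uniform pointwise estimate of the form $|\nabla u_k(x)|^{p_k-2}\le C$.

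Fix $\eta>0$ small enough so that, by the $L^2$-energy quantization of Theorem \ref{THM: L2 en quant of na u}, $\int_{A(2\eta,\delta_k)}|\nabla u_k|^2<\varepsilon$ for all $k$ large, where $\varepsilon$ is the threshold of the $\varepsilon$-regularity Lemma \ref{LEMMA: epsilon reg sacks uhlenbeck for p harm}. Split $\Sigma$ into three zones. On the macroscopic zone $\Sigma\setminus B_\eta(q)$ the smooth convergence $u_k\to u_\infty$ gives $\|\nabla u_k\|_{L^\infty}\le C$. On the bubble zone $B_{\delta_k/\eta}(x_k)$, passing to the rescaled coordinates $y=(x-x_k)/\delta_k$ we are on the fixed compact ball $B_{1/\eta}$, and the smooth convergence $v_k\to v_\infty$ there gives $|\nabla u_k(x)|\le C_\eta/\delta_k$. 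On the neck $A(\eta,\delta_k)$, for $x$ with $r=|x-x_k|$ the ball $B_{r/4}(x)$ sits inside $A(2\eta,\delta_k)$, so Lemma \ref{LEMMA: epsilon reg sacks uhlenbeck for p harm} yields $|\nabla u_k(x)|\le C/r\le C/\delta_k$ after absorbing the fixed $\eta$. In all three zones we therefore obtain
\begin{equation*}
(1+|\nabla u_k|^2)^{p_k/2-1}\;\le\;C_\eta\,\delta_k^{-(p_k-2)},
\end{equation*}
so the lemma reduces to the Sacks--Uhlenbeck entropy estimate $\delta_k^{p_k-2}\ge c>0$.

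To prove the entropy estimate I would rescale and compute: with $v_k(y)=u_k(x_k+\delta_k y)$,
\begin{equation*}
\int_{B_{R\delta_k}(x_k)}(1+|\nabla u_k|^2)^{p_k/2}\,dx \;=\; \delta_k^{2-p_k}\int_{B_R}\bigl(\delta_k^2+|\nabla v_k|^2\bigr)^{p_k/2}\,dy.
\end{equation*}
Since $v_k\to v_\infty$ in $C^\infty(B_R)$, $\delta_k\to 0$, and $p_k\to 2$, dominated convergence gives $\int_{B_R}(\delta_k^2+|\nabla v_k|^2)^{p_k/2}\,dy\to\int_{B_R}|\nabla v_\infty|^2\,dy$. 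Choosing $R$ large so that $\int_{B_R}|\nabla v_\infty|^2\,dy\ge\tfrac12\,E(v_\infty)>0$---which is possible because the bubble is by definition non-constant and extends to a non-trivial harmonic sphere via the Sacks--Uhlenbeck removability theorem---and using the uniform bound $E_{p_k}(u_k)\le C$, one obtains $\delta_k^{2-p_k}\le 2C/E(v_\infty)$, as required.

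The main obstacle is the dominated-convergence step in the scaling identity as $p_k\to 2$ and $\delta_k\to 0$ occur simultaneously; the uniform $C^0(B_R)$-bound on $|\nabla v_k|$ inherited from smooth convergence supplies a dominating constant, so the limit is controlled. The homogeneity of $\mathcal N$ plays no role in this argument, so the proof is essentially identical to the sphere-case proof in \cite{DLRS25}.
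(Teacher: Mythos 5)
Your proposal is correct and follows exactly the two-step route the paper sketches and references to \cite{DLRS25} and \cite{LZ19}: first bound $\|\nabla u_k\|_{L^\infty(\Sigma)}\le C\delta_k^{-1}$ by splitting into macroscopic, neck, and bubble zones (using smooth convergence in the first and third and $\varepsilon$-regularity in the neck), then control $\delta_k^{2-p_k}$ by comparing the rescaled $p_k$-energy on $B_{R\delta_k}$ against the uniform bound $E_{p_k}(u_k)\le C$ and the nontrivial bubble energy $E(v_\infty)>0$. The details you fill in — verifying $B_{r/4}(x)\subset A(2\eta,\delta_k)$, the change-of-variables identity, and the dominated-convergence passage to $\int_{B_R}|\nabla v_\infty|^2$ — are all sound.
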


\begin{proof}
This proof is rather standard in the Sacks-Uhlenbeck bubbling analysis.
One bounds $\|\nabla u_k\|_{L^\infty(\Sigma)} \le C \delta_k^{-1}$ and the result follows by a rescaling argument.
For all the details see \cite{DLRS25} and also \cite{LZ19}.
\end{proof}

\begin{theorem}[$L^{2,1}$-energy quantization]
There holds
\begin{equation}
\lim_{\eta\searrow0}\limsup_{k\rightarrow\infty}\norm{\nabla u_k}_{L^{2,1}(A(\eta,\delta_k))}=0.
\end{equation}
\end{theorem}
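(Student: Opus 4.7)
The plan is to upgrade Theorem \ref{THM: L2 en quant of na u} ($L^2$-energy quantization) to $L^{2,1}$ by tracking each piece of the Hodge-type decomposition \eqref{EQ: Final decomp of na u} in the finer Lorentz scale. Since $(1+\abs{\nabla u_k}^2)^{(p_k-2)/2} \ge 1$ pointwise, one has $\abs{\nabla u_k} \le \abs{X_k}$ and hence $\norm{\nabla u_k}_{L^{2,1}(A(\eta,\delta_k))} \le \norm{X_k}_{L^{2,1}(A(\eta,\delta_k))}$, so it suffices to estimate the four contributions $\nabla^\perp b_{\eta,k}$, $\nabla \varphi_{\eta,k}$, $\nabla \mathfrak h^\pm_{\eta,k}$ to $X_k$.

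The harmonic parts $\nabla \mathfrak h^\pm_{\eta,k}$ are handled directly by Lemma \ref{LEMMA: L21 of na h}. For $\nabla \varphi_{\eta,k}$, I would apply Wente's inequality (Coifman--Lions--Meyer--Semmes) to the Jacobian equation \eqref{EQ: Eq for De phi sumed} on $\C$, giving
\[
\norm{\nabla \varphi_{\eta,k}}_{L^{2,1}(\C)} \le C\sum_{i=1}^L \norm{\nabla \widetilde B^i_{\eta,k}}_{L^2(\C)} \norm{\nabla \widetilde \Upsilon^i_{\eta,k}}_{L^2(\C)}.
\]
The pointwise bounds \eqref{EQ: inwfo9n2038njJNUDW223d}, \eqref{EQ: iunfr9e902309niJNI93sJM2jms} combined with Lemma \ref{LEMMA: Linfty bd on na u in the necks unif} give $\abs{\nabla B^i}, \abs{\nabla \Upsilon^i} \le C\abs{\nabla u_k}$, and the Whitney extension estimates \eqref{EQ: jnfemorjsfJMNi29d91}, \eqref{EQ: Control of tilde B uhni893hre} then reduce each $L^2$-factor to $C\norm{\nabla u_k}_{L^2(A(\eta,\delta_k))}$; Theorem \ref{THM: L2 en quant of na u} closes this step.

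The main obstacle is the estimate of $\nabla b_{\eta,k}$: there is no direct Wente identity (the Jacobian $\Delta b = \nabla^\perp f \cdot \nabla u$ with $f=(1+\abs{\nabla u_k}^2)^{(p_k-2)/2}$ would require uniform $L^2$-control on $\nabla^2 u_k$, which fails on the neck), and the Coifman--Rochberg--Weiss commutator of Lemma \ref{LEMMA: First est of nabla b in necks} only gives an $L^{p_k'}$-smallness of order $p_k-2$. My plan is to combine the latter with a Calder\'on--Zygmund $L^{p_k}$-bound on the Hodge decomposition, exploiting that $1/p_k + 1/p_k' = 1$ to obtain the real-interpolation identity $(L^{p_k}, L^{p_k',\infty})_{1/2, 1} = L^{2,1}$ and the corresponding Lions--Peetre inequality
\[
\norm{\nabla b_{\eta,k}}_{L^{2,1}(A)} \le C\, \norm{\nabla b_{\eta,k}}_{L^{p_k}(A)}^{1/2}\, \norm{\nabla b_{\eta,k}}_{L^{p_k',\infty}(A)}^{1/2}.
\]
Using $\norm{\nabla b_{\eta,k}}_{L^{p_k'}(B_1)} \le C(p_k-2)$ from Lemma \ref{LEMMA: First est of nabla b in necks}, together with the Calder\'on--Zygmund estimate $\norm{\nabla b_{\eta,k}}_{L^{p_k}(B_1)} \le C\, \norm{X_k}_{L^{p_k}(B_1)} \le C\, E_{p_k}(u_k)^{1/p_k}$ (uniformly bounded by \eqref{EQ: Uniform bound on the energy 8912H2egBeZ}), the right-hand side is $\le C(p_k-2)^{1/2} \to 0$. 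Assembling the four contributions via the triangle inequality completes the proof.
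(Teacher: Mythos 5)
Your treatment of $\nabla\mathfrak h^\pm_{\eta,k}$ and of $\nabla\varphi_{\eta,k}$ is sound and close to the paper's, but the estimate for $\nabla b_{\eta,k}$ contains a genuine gap: the interpolation inequality $\norm{f}_{L^{2,1}} \le C\,\norm{f}_{L^{p_k}}^{1/2}\,\norm{f}_{L^{p_k',\infty}}^{1/2}$ does hold for each fixed $k$, but the constant is \emph{not} uniform as $p_k\searrow 2$. Splitting $\int_0^{|\Omega|} t^{-1/2}f^*(t)\,dt$ at the crossover of the two weak-type rearrangement bounds gives $C\sim \tfrac{1}{1/2-1/p_k}+\tfrac{1}{1/p_k'-1/2}=\tfrac{4p_k}{p_k-2}$. (Heuristically: a uniform constant would, in the limit $p_k\to 2$, yield $\norm{f}_{L^{2,1}}\lesssim\norm{f}_{L^{2}}$, which is false.) Plugging in Lemma \ref{LEMMA: First est of nabla b in necks} and the uniform $L^{p_k}$-bound, your chain therefore produces $\norm{\nabla b_{\eta,k}}_{L^{2,1}}\lesssim (p_k-2)^{-1}\cdot(p_k-2)^{1/2}\cdot 1=(p_k-2)^{-1/2}$, which diverges rather than vanishes. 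The smallness of $\nabla b_{\eta,k}$ lives in $L^{p_k'}$ with $p_k'<2$, and there is no uniform-constant way to push that subcritical smallness into $L^{2,1}$; it is not even clear that $\norm{X_k}_{L^{2,1}(A(\eta,\delta_k))}$ stays bounded, so starting from $\abs{\nabla u_k}\le\abs{X_k}$ is already the wrong opening move.

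The paper avoids this by never estimating $\norm{X_k}_{L^{2,1}}$ or $\norm{\nabla b_{\eta,k}}_{L^{2,1}}$ at all. Instead it uses the sharper pointwise bound $\abs{\nabla u_k}\le\abs{X_k}^{1/(p_k-1)}$ together with the subadditivity of $t\mapsto t^{1/(p_k-1)}$ (valid since $1/(p_k-1)<1$) to distribute the power over the decomposition \eqref{EQ: Final decomp of na u}, and then estimates each \emph{skewed} piece $\abs{\cdot}^{1/(p_k-1)}$ in $L^{2,1}$. This is precisely where the $b$-term is rescued: $\abs{\nabla b_{\eta,k}}^{1/(p_k-1)}$ lies in $L^{p_k}$ (not merely $L^{p_k'}$) with $\norm{\abs{\nabla b_{\eta,k}}^{1/(p_k-1)}}_{L^{p_k}}=\norm{\nabla b_{\eta,k}}_{L^{p_k'}}^{1/(p_k-1)}\lesssim (p_k-2)^{1/(p_k-1)}$. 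The H\"older embedding $L^{p_k}(B_1)\hookrightarrow L^{2,1}(B_1)$ still costs $\sim(p_k-2)^{-1/2}$, but since $1/(p_k-1)>1/2$ near $p_k=2$, the net exponent $1/(p_k-1)-1/2\to 1/2>0$ is positive and the product vanishes; this is exactly Lemma \ref{LEMMA: L21 of na b}. For $\varphi$ and $\mathfrak h^\pm$ the passage to the skewed quantities is done via the elementary inequality \eqref{EQ: ZBU3rZfi8U9djogqw}. This exact-constant bookkeeping, and the decision to work with $\abs{X_k}^{1/(p_k-1)}$ rather than $\abs{X_k}$, are the missing ingredients in your proposal.
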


\begin{proof}
By following \eqref{EQ: uZG813g87UH} and using Lemma \ref{LEMMA: Linfty bd on na u in the necks unif} one gets
\begin{equation}
\begin{aligned}
\norm{\nabla \varphi_{\eta,k}}_{L^{2,1}(\C)}
\le C \norm{\nabla u_{k}}_{L^{p_k}(A(\eta,\delta_k))}^2
\le C \norm{\nabla u_k}_{L^2(A(\eta,\delta_k))}^{\frac{4}{p_k}}
\end{aligned}
\end{equation}
Using Theorem \ref{THM: L2 en quant of na u} we find
\begin{equation}
\label{EQ: jNU8uj893NIJ1}
\lim_{\eta\searrow0}\limsup_{k\rightarrow\infty} \norm{\nabla \varphi_{\eta,k}}_{L^{2,1}(\C)}=0.
\end{equation}
Using \eqref{EQ: ZBU3rZfi8U9djogqw} we find
\begin{equation}
\label{EQ: un23J2jd10jN012}
\lim_{\eta\searrow0}\limsup_{k\rightarrow\infty} \norm{\abs{\nabla \varphi_{\eta,k}}^{\frac{1}{p_k-1}}}_{L^{2,1}(A(\eta,\delta_k))}=0.
\end{equation}
Going back to the decomposition \eqref{EQ: Final decomp of na u} and combining Lemma \ref{LEMMA: L21 of na b}, \eqref{EQ: L21 skewed na h ZH82bn9Ns1} and \eqref{EQ: un23J2jd10jN012}
\begin{equation}
\lim_{\eta\searrow0}\limsup_{k\rightarrow\infty} \norm{\abs{X_k}^{\frac{1}{p_k-1}}}_{L^{2,1}(A(\eta,\delta_k))}=0.
\end{equation}
The bound $\abs{\nabla u_k}\le \abs{X_k}^{\frac{1}{p_k-1}}$ gives the claimed result.
\end{proof}

\vspace{10mm}
\section{Pointwise Control of the Gradient in the Neck Regions}
\label{SECTION: Pointwise Control of the Gradient in the Neck Regions}

In this section we show an improved pointwise control in the neck regions compared to the control coming from $\varepsilon$-regularity Lemma \ref{LEMMA: epsilon reg sacks uhlenbeck for p harm}:

\begin{theorem}
\label{THEOREM: on Pointwise Estimate of the Gradient in the Necks p harm}
For any given $\be \in \left(0,\log_2 (3/2) \right)$ we find that for $k \in \N$ large and $\eta>0$ small
\begin{equation}
\label{EQ: 283hfunv0lpdnu13ef6zh1}
\begin{gathered}
\forall x \in A(\et,\de_k): \qquad
\abs{x}^2 \abs{\nabla u_k(x)}^2 \le \left[ \left( \frac{\abs{x}}{\et} \right)^\be + \left( \frac{\de_k}{\et \abs{x}} \right)^\be \right]\boldsymbol{\epsilon}_{\eta, \delta_k} +  \boldsymbol{\mathrm c}_{\eta, \delta_k},
\end{gathered}
\end{equation}
where
\begin{equation}
\lim_{\eta\searrow0}\limsup_{k\rightarrow\infty}\boldsymbol{\epsilon}_{\eta, \delta_k}=0,
\qquad \text{and} \qquad
\lim_{\eta\searrow0}\limsup_{k\rightarrow\infty}\boldsymbol{\mathrm c}_{\eta, \delta_k} \log^2\left(\frac{\eta^2}{\delta_k}\right) =0.
\end{equation}
\end{theorem}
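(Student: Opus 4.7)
My plan is to combine the $\varepsilon$-regularity of Lemma \ref{LEMMA: epsilon reg sacks uhlenbeck for p harm} with the Hodge decomposition \eqref{EQ: Final decomp of na u} and the $L^{2,1}$-energy quantization (Theorem \ref{THM: L2 en quant of na u} and the refined estimates obtained inside its proof), via a three-annulus iteration in the spirit of \cite{DLRS25} and \cite{DGR22}. Writing $r := |x|$ and $A_r := B_{3r/2} \setminus B_{r/2}$, the first step is to reduce the pointwise bound to an annular $L^2$-bound: since by Theorem \ref{THM: L2 en quant of na u} the $L^2$-energy of $u_k$ on $A_r$ is below the $\varepsilon$-regularity threshold as soon as $\eta$ is small and $k$ large, applying Lemma \ref{LEMMA: epsilon reg sacks uhlenbeck for p harm} on $B_{r/4}(x)$ yields
\begin{equation*}
|x|^2 |\nabla u_k(x)|^2 \le C \int_{A_r} |\nabla u_k|^2 =: C\, E_k(r),
\end{equation*}
so it suffices to establish the claimed decay for $E_k(r)$ at every dyadic scale $r \in [2\delta_k/\eta,\, \eta/2]$.

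Second, I would control $E_k(r)$ by injecting \eqref{EQ: Final decomp of na u} into the inequality $|\nabla u_k|^2 \le C\, X_k \cdot \nabla u_k$---valid thanks to Lemma \ref{LEMMA: Linfty bd on na u in the necks unif}---and integrating by parts against a dyadic cutoff supported in a mild thickening of $A_r$. Among the four resulting pairings, the Wente contribution is controlled by Lorentz duality via $\|\nabla \varphi_{\eta,k}\|_{L^{2,1}(\C)}\, \|\nabla u_k\|_{L^{2,\infty}(A_r)}$, which from the proof of Theorem \ref{THM: L2 en quant of na u} together with Lemma \ref{THM: L2infty en quant of na uk} is $o_{\eta,k}(1)$ uniformly in $r$; the gauge contribution involves $\nabla b_{\eta,k}$, of size $p_k - 2$ in $L^{p_k^\prime}$ by Lemma \ref{LEMMA: First est of nabla b in necks}, whose pairing produces a term of order $(p_k-2)\log(\eta^2/\delta_k)$ that will be absorbed into $\boldsymbol{\mathrm c}_{\eta,\delta_k}$.

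Third, the two harmonic pieces $\nabla \mathfrak{h}^\pm_{\eta,k}$ exhibit polynomial decay from the two ends of the neck thanks to their Laurent expansions; the orthogonality of Fourier modes on circles produces the one-sided dyadic decays
\begin{equation*}
\int_{A_r} |\nabla \mathfrak{h}^+_{\eta,k}|^2 \le \tfrac{3}{4} \int_{A_{2r}} |\nabla \mathfrak{h}^+_{\eta,k}|^2, \qquad \int_{A_r} |\nabla \mathfrak{h}^-_{\eta,k}|^2 \le \tfrac{3}{4} \int_{A_{r/2}} |\nabla \mathfrak{h}^-_{\eta,k}|^2.
\end{equation*}
Combined with Step~2 this yields a three-annulus recursion
\begin{equation*}
E_k(r) \le \lambda \bigl[ E_k(2r) + E_k(r/2) \bigr] + \boldsymbol{\mathrm c}_{\eta,\delta_k}
\end{equation*}
with some $\lambda < 1/2$. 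Anchoring the iteration at the endpoints by $E_k(\eta/2),\, E_k(2\delta_k/\eta) \le \boldsymbol{\epsilon}_{\eta,\delta_k}$ (from Theorem \ref{THM: L2 en quant of na u}) and iterating dyadically produces the claimed decay for every $\beta$ satisfying $2^\beta + 2^{-\beta} < 1/\lambda$, i.e.\ $\beta < \log_2(3/2)$, which returned to Step~1 gives \eqref{EQ: 283hfunv0lpdnu13ef6zh1}.

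The main obstacle will be Step~2: the integration by parts has to be carried out so that the harmonic boundary terms genuinely split into a "+" contribution decaying towards the bubble scale and a "-" contribution decaying towards the body scale, while the Wente and gauge errors are kept quantitatively controlled so that their aggregate lies inside $\boldsymbol{\mathrm c}_{\eta,\delta_k}$ with $\boldsymbol{\mathrm c}_{\eta,\delta_k} \log^2(\eta^2/\delta_k) \to 0$. It is precisely here, in turning $X_k$ into a (nearly) Hodge-exact field on each dyadic annulus (cf.\ \eqref{decXk}), that the extended conservation law of Theorem \ref{THEOREM: Conservation Law p harm map equation} plays its essential role beyond the sphere case.
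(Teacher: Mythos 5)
Your plan diverges substantially from the paper's proof, and as written it has a genuine gap precisely at the point you flag as "the main obstacle."

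The paper does not run a three-annulus recursion. Instead it establishes, via the weighted Wente inequality (Lemma F.1 of \cite{DGR22}) applied to \eqref{EQ: Eq for De phi}, the discrete convolution estimate
\begin{equation*}
\int_{A_j}|\nabla\varphi_{\eta,k}|^2 \le C\,\|\nabla u_k\|_{L^2(A(\eta,\delta_k))}^2\Bigl(\gamma^j + \sum_{l}\gamma^{|l-j|}\int_{A_l}|\nabla\widetilde u_k|^2\Bigr),
\end{equation*}
(Lemma~\ref{LEMMA: Dyadic est of na phi start}), then converts $L^{p_k'}$ control of $X_k$ into $L^2$ control of $\nabla u_k$ on each dyadic annulus (Lemmas~\ref{LEMMA: Lower bound on the mass over diadic annulus with p-2} and \ref{LEMMA: Lemma on the start of dyadic est}, which need the entropy condition Lemma~\ref{LEMMA: Growth of pk and ln etadelta}), and finally absorbs the self-referential sum using the discrete Lemma~G.1 of \cite{DGR22}. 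The role of the weighted Wente estimate is to make the Wente contribution itself inherit the dyadic decay: it goes into the \emph{first} bracket $\boldsymbol{\epsilon}_{\eta,\delta_k}\bigl[(|x|/\eta)^\beta+(\delta_k/(\eta|x|))^\beta\bigr]$, not into $\boldsymbol{\mathrm c}_{\eta,\delta_k}$.

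In your Step~2 you bound the Wente pairing uniformly in $r$ by $\|\nabla\varphi_{\eta,k}\|_{L^{2,1}(\mathbb C)}\|\nabla u_k\|_{L^{2,\infty}(A_r)} = o_{\eta,k}(1)$ and intend to deposit it inside $\boldsymbol{\mathrm c}_{\eta,\delta_k}$. This is where the argument fails: $\boldsymbol{\mathrm c}_{\eta,\delta_k}$ must satisfy the much stronger requirement $\boldsymbol{\mathrm c}_{\eta,\delta_k}\log^2(\eta^2/\delta_k)\to 0$ (this refinement is exactly what is needed downstream, in Theorem~\ref{THM: JDNW892fejeujfwef32}, to beat the weighted Poincar\'e constant in the neck). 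A global $L^{2,1}$--$L^{2,\infty}$ Lorentz pairing only gives $o_{\eta,k}(1)$, not $o_{\eta,k}(1/\log^2)$. The only quantity that lands in $\boldsymbol{\mathrm c}$ in the paper's proof is $(C_{\eta,k})^2 \sim \|\nabla b_{\eta,k}\|^2 + (p_k-2)^2$, and the $\log^2$-smallness for this term comes from combining the commutator estimate $\|\nabla b_{\eta,k}\|_{L^{p_k'}} \lesssim p_k-2$ (Lemma~\ref{LEMMA: First est of nabla b in necks}) with the entropy identity $(p_k-2)\log(\eta^2/\delta_k)\to 0$ (Lemma~\ref{LEMMA: Growth of pk and ln etadelta}). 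Nothing of this strength is available for $\nabla\varphi_{\eta,k}$; the weighted Wente is precisely what replaces it.

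Two smaller points. First, your pairing $|\nabla u_k|^2 \le X_k\cdot\nabla u_k$ followed by expansion of $X_k$ does not cleanly detach $\nabla u_k$ from the right-hand side; the paper instead needs the reverse-H\"older step of Lemma~\ref{LEMMA: Lemma on the start of dyadic est} (anchored by the lower bound Lemma~\ref{LEMMA: Lower bound on the mass over diadic annulus with p-2}) to convert $L^{p_k'}$ of $X_k$ into $L^2$ of $\nabla u_k$ annulus by annulus — your plan elides this. Second, the constant $3/4$ in your one-sided decays for $\nabla\mathfrak h^\pm$ is not right: for the lowest harmonic mode the dyadic ratio is $1/4$, and the constraint $\beta<\log_2(3/2)$ in the statement comes from the interplay of the Wente convolution weight $\gamma$ and the absorption parameter $\mu$ in Lemma G.1 of \cite{DGR22}, not from a three-annulus recursion with $\lambda<1/2$.

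Your Step~1 (reducing the pointwise bound to an annular $L^2$ bound via $\varepsilon$-regularity, once the energy identity guarantees smallness) is correct and coincides with the paper's final step. It is the treatment of $\nabla\varphi_{\eta,k}$ that must be upgraded to the weighted Wente inequality for the proof to deliver the refined $\log^2$ estimate on $\boldsymbol{\mathrm c}_{\eta,\delta_k}$.
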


We will closley follow the proof of Theorem 4.1 in \cite{DLRS25} and adapt it from the sphere to the homogeneous case.
See also \cite{DGR22}.
Introduce the notation
\begin{equation}
A_j=B_{2^{-j}}\setminus B_{2^{-j-1}} , \qquad j \in \N.
\end{equation}
Now recall that we are working with the decomposition introduced in \eqref{EQ: Final decomp of na u}. \par

\begin{lemma}[Estimate of $\nabla \varphi_{\eta,k}$]\label{LEMMA: Dyadic est of na phi start}
For any $\gamma \in \left(0,\frac{2}{3}\right]$ there is a constant $C=C(\gamma)>0$ such that for $k\in\N$ large and $\eta>0$ small
\begin{equation}
\int_{A_j} \abs{\nabla \varphi_{\eta,k}}^2 \ dx 
\le C  \norm{\nabla u_k}_{L^2(A(\eta,\delta_k))}^2 \left(\gamma^j  + \sum_{l=0}^\infty \gamma^{\abs{l-j}} \int_{A_l} \abs{\nabla\widetilde u_k}^2 \ dx\right),
\end{equation}
where $j\in \N$.
\end{lemma}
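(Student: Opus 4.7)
The plan is to follow the dyadic-annulus strategy of Lemma 4.2 in \cite{DLRS25}, adapted to the Jacobian right-hand side
\[
\sum_{i=1}^L \nabla^\perp \widetilde{\mathrm B}^i_{\eta,k} \cdot \nabla \widetilde \Upsilon^i_{\eta,k}
\]
coming from the conservation law in Theorem \ref{THEOREM: Conservation Law p harm map equation}. The idea is to decompose this right-hand side according to the dyadic annuli $A_l$, estimate each localised piece via a Wente-type inequality, and then use the geometric decay of harmonic extensions to sum the contributions with the weights $\gamma^{|l-j|}$.

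Concretely, I would fix a partition of unity $\{\chi_l\}_{l \in \N}$ subordinate to a slight fattening of the family $\{A_l\}$, and for each $i \in \{1,\dots,L\}$ and each $l \in \N$ introduce the auxiliary function $\varphi^{(l)}_i$ solving
\[
-\Delta \varphi^{(l)}_i = \nabla^\perp \widetilde{\mathrm B}^i_{\eta,k} \cdot \nabla(\chi_l \widetilde \Upsilon^i_{\eta,k}) \quad \text{in } \C,
\]
so that $\varphi_{\eta,k} = \sum_{i,l} \varphi^{(l)}_i$ by linearity. The lower-order error $(\nabla \chi_l)\widetilde \Upsilon^i_{\eta,k}$ generated by the cut-off is handled by subtracting a suitable constant from $\widetilde \Upsilon^i_{\eta,k}$ on each annulus and invoking Poincaré. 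Wente's $L^{2,1}$ inequality for Jacobians, combined with \eqref{EQ: iunfr9e902309niJNI93sJM2jms}, \eqref{EQ: Control of tilde B uhni893hre}, and the uniform pointwise bound of Lemma \ref{LEMMA: Linfty bd on na u in the necks unif}, yields the localised control
\[
\|\nabla \varphi^{(l)}_i\|_{L^2(\C)} \le C \|\nabla u_k\|_{L^2(A(\eta,\delta_k))} \, \|\nabla \widetilde u_k\|_{L^2(A_l)}.
\]

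Outside the support of its right-hand side, $\varphi^{(l)}_i$ is harmonic on an annular region whose logarithmic modulus grows with $|j-l|$. Expanding it in Laurent series and using interior Cauchy estimates (equivalently, the orthogonality of the modes $z^n, \bar z^n$ on concentric circles), one obtains the geometric decay
\[
\int_{A_j} |\nabla \varphi^{(l)}_i|^2 \, dx \le C \gamma^{|j-l|} \|\nabla \varphi^{(l)}_i\|_{L^2(\C)}^2, \qquad \gamma \in (0,\tfrac{2}{3}].
\]
Summing over $i \in \{1,\dots,L\}$ and $l \in \N$ and factoring out $\|\nabla u_k\|_{L^2(A(\eta,\delta_k))}^2$ produces the asserted inequality. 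The stand-alone $\gamma^j$ term arises from the boundary contribution at the outer scale $|x|\approx 1$, where the Whitney extensions of $\mathrm B^i_{\eta,k}$ and $\Upsilon^i_{\eta,k}$ are cut off; this residual piece is governed by its own harmonic decay estimate propagating into the neck and is absorbed into the $\|\nabla u_k\|_{L^2(A(\eta,\delta_k))}^2$ prefactor.

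The main obstacle will be pinning down the decay rate $\gamma \le 2/3$: one must track both positive and negative Laurent modes on each intermediate annulus, estimate cross-interactions, and verify that the worst pair produces this precise rate --- which matches, via $\gamma = 2^{-\beta}$, the exponent $\beta < \log_2(3/2)$ appearing later in Theorem \ref{THEOREM: on Pointwise Estimate of the Gradient in the Necks p harm}. A secondary delicate point is that Wente's inequality requires an $L^2 \times L^2 \to L^{2,1}$ structure, so it is essential to invoke Lemma \ref{LEMMA: Linfty bd on na u in the necks unif} to pass from the natural $L^{p_k^\prime} \times L^{p_k}$ duality of $(\nabla \widetilde{\mathrm B}^i, \nabla \widetilde \Upsilon^i)$ into the $L^2 \times L^2$ regime with constants sharp enough to survive the dyadic summation.
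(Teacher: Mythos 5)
Your approach is genuinely different from the paper's. Where you propose to localise the right-hand side via a partition of unity $\{\chi_l\}$, solve a family of auxiliary Poisson problems $\varphi^{(l)}_i$, apply Wente's inequality to each, and then reassemble using the harmonic decay of each piece across dyadic scales, the paper instead cites the \emph{weighted Wente inequality}, Lemma F.1 of \cite{DGR22}, as a black box. That lemma delivers, in a single step, exactly the estimate
\begin{equation*}
\int_{A_j} \abs{\nabla \varphi_{\eta,k}^i}^2
\le \gamma^j \int_{\C} \abs{\nabla \varphi_{\eta,k}^i}^2 + C \int_{A(2\eta,\delta_k)} \abs{\nabla\widetilde{\mathrm B}_{\eta,k}^i}^2 \ \sum_{l=0}^\infty \gamma^{\abs{l-j}} \int_{A_l} \abs{\nabla \widetilde \Upsilon_{\eta,k}^i}^2 ,
\end{equation*}
after which one only needs the global Wente estimate $\int_\C \abs{\nabla\varphi^i_{\eta,k}}^2\le C\norm{\nabla u_k}_{L^2(A(\eta,\delta_k))}^2$, the bound $\norm{\nabla\widetilde{\mathrm B}^i_{\eta,k}}_{L^2(\C)}\le C\norm{\nabla u_k}_{L^2(A(\eta,\delta_k))}$ (which uses $\eqref{EQ: inwfo9n2038njJNUDW223d}$, $\eqref{EQ: Control of tilde B uhni893hre}$ and Lemma~\ref{LEMMA: Linfty bd on na u in the necks unif}, exactly as you say), and $\abs{\nabla\widetilde\Upsilon^i_{\eta,k}}\le C\abs{\nabla\widetilde u_k}$. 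In effect you are re-deriving Lemma F.1 of \cite{DGR22} from scratch; your route is more self-contained but substantially longer, while the paper's is a two-line reduction to a cited result.

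Two real gaps in your write-up. First, the reassembly step is not just ``summing'': you obtain bounds on $\norm{\nabla\varphi^{(l)}_i}_{L^2(A_j)}$, and since $\varphi^i_{\eta,k}=\sum_l \varphi^{(l)}_i$ you must sum the $L^2$-norms by the triangle inequality and \emph{then} square, which requires a Cauchy--Schwarz step that halves the decay exponent (you end up with $\gamma^{\abs{l-j}/2}$, not $\gamma^{\abs{l-j}}$, inside the final sum). Your stated decay $\gamma\le 2/3$ for the localised pieces is therefore not good enough as written; you would need to show a localised decay at rate $\gamma^2\le 4/9$ to recover the lemma's $\gamma\le 2/3$. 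As it happens the genuine dyadic harmonic decay across annuli of ratio $2$ (lowest Laurent mode, after noting the localised right-hand side has zero mean so there is no $\log$ term) is of order $1/4 < 4/9$, so the argument can be repaired, but the numerology in your sketch is off and must be tracked. Second, the source of the standalone $\gamma^j$ term is not the ``boundary contribution at the outer scale'' in any clear sense; in the weighted Wente inequality it appears because the estimate retains a $\gamma^j\int_\C\abs{\nabla\varphi^i_{\eta,k}}^2$ piece, which is then bounded globally by Wente; in your localisation scheme you would need to isolate and estimate the analogous residual explicitly rather than absorbing it by assertion.
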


\begin{proof}
First we start by bounding
\begin{equation}
\label{EQ: inscdjn98321h834567}
\int_{A_j} \abs{\nabla \varphi_{\eta,k}}^2 \ dx 
\le L \sum_{i=1}^L \int_{A_j} \abs{\nabla \varphi_{\eta,k}^i}^2 \ dx 
\end{equation}
Applying the weighted Wente inequality Lemma F.1 of \cite{DGR22} for $i=1,\dots,L$ to \eqref{EQ: Eq for De phi} we have
\begin{equation}
\label{EQ: uinnewf793213n9nJAUDNB}
\begin{aligned}
\int_{A_j} \abs{\nabla \varphi_{\eta,k}^i}^2 \ dx 
&\le \gamma^j \int_{\C} \abs{\nabla \varphi_{\eta,k}^i}^2 \ dx + C \int_{A(2\eta,\delta_k)} \abs{\nabla\widetilde{\mathrm B}_{\eta,k}^i}^2 dx\ \sum_{l=0}^\infty \gamma^{\abs{l-j}} \int_{A_l} \abs{\nabla \widetilde \Upsilon_{\eta,k}^i}^2 dx.
\end{aligned}
\end{equation}
Using Lemma A.1 of \cite{DLRS25}, \eqref{EQ: inwfo9n2038njJNUDW223d} and Lemma \ref{LEMMA: Linfty bd on na u in the necks unif} we find that
\begin{equation}
\label{EQ: iuenrNU20893n9032unNQ10}
\begin{aligned}
\int_{A(2\eta,\delta_k)} \abs{\nabla\widetilde{\mathrm B}_{\eta,k}^i}^2 dx
\le C \int_{A(\eta,\delta_k)} \abs{\nabla{\mathrm B}_{\eta,k}^i}^2 dx 
\le C \norm{\nabla u_k}_{L^2(A(\eta,\delta_k))}^2
\end{aligned}
\end{equation}
Using \eqref{EQ: iunfr9e902309niJNI93sJM2jms} one has
\begin{equation}
\label{EQ: uiwnefie90931mJNJND12NHIN32in12sdfas}
\begin{aligned}
\int_{A_l} \abs{\nabla \widetilde \Upsilon_{\eta,k}^i}^2 dx
&\le C \int_{A_l} \abs{\nabla \widetilde u_k}^2 dx
\end{aligned}
\end{equation}
as well as with \eqref{EQ: jnfemorjsfJMNi29d91}
\begin{equation}
\label{EQ: uiwfnee89UNJ398nj132AQ}
\norm{\nabla \widetilde \Upsilon_{\eta,k}^i}_{L^2(\C)}
\le C \norm{\nabla u_k}_{L^2(A(\eta,\delta_k))}
\end{equation}
By Wente's inequality applied to \eqref{EQ: Eq for De phi} and the above estimates \eqref{EQ: iuenrNU20893n9032unNQ10}, \eqref{EQ: uiwfnee89UNJ398nj132AQ} we have
\begin{equation}
\label{EQ: nuifwerifndqJNDW093e12e}
\int_{\C} \abs{\nabla \varphi_{\eta,k}^i}^2 \ dx 
\le C \norm{\nabla\widetilde{\mathrm B}_{\eta,k}^i}_{L^2(\C)} \norm{\nabla \widetilde \Upsilon_{\eta,k}^i}_{L^2(\C)}
\le C \norm{\nabla u_k}_{L^2(A(\eta,\delta_k))}^2.
\end{equation}
Hence, with \eqref{EQ: inscdjn98321h834567}, \eqref{EQ: uinnewf793213n9nJAUDNB}, \eqref{EQ: iuenrNU20893n9032unNQ10}, \eqref{EQ: uiwnefie90931mJNJND12NHIN32in12sdfas} and \eqref{EQ: nuifwerifndqJNDW093e12e}
\begin{equation}
\int_{A_j} \abs{\nabla \varphi_{\eta,k}}^2 \ dx 
\le C \gamma^j \norm{\nabla u_k}_{L^2(A(\eta,\delta_k))}^2 + C \norm{\nabla u_k}_{L^2(A(\eta,\delta_k))}^2 \sum_{l=0}^\infty \gamma^{\abs{l-j}} \int_{A_l} \abs{\nabla\widetilde u_k}^2 \ dx.
\end{equation}
\end{proof}

\begin{lemma}[Estimate of $\nabla\mathfrak h_{\eta,k}$]
\label{LEMMA: on the est of na h in necks}
For $k\in\N$ large and $\eta>0$ small there holds
\begin{equation}
\int_{A_j} \abs{\nabla \mathfrak h_{\eta,k}}^2 dz 
\le C \left[ \left( \frac{2^{-j}}{\et}\right)^2 + \left( \frac{\de_k}{2^{-j}\et}\right)^2 \right] \left(\norm{\nabla u_k}_{L^2(A(\eta,\delta_k))}^2 + \norm{\nabla b_{\eta,k}}_{L^{p_k^\prime}(B_1)} \right) ,
\end{equation}
where $j\in\N$ is such that $\frac{2\delta_k}{\eta}\le2^{-j-1}<2^{-j}\le \frac{\eta}{2} $.
\end{lemma}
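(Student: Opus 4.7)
By the preceding lemma $C_0^k=0$, so on $A(\eta,\delta_k)$ the harmonic function $\mathfrak h_{\eta,k}$ splits as $\mathfrak h_{\eta,k}^+ + \mathfrak h_{\eta,k}^-$. The argument has two steps: a sharp dyadic decay estimate for the gradient of each Laurent piece, and a global bound on the neck integral $\int_{A(\eta,\delta_k)}|\nabla\mathfrak h_{\eta,k}|^2$ in terms of the quantities on the right-hand side.

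\emph{Step 1 (dyadic decay via Parseval).} I would expand $\mathfrak h_{\eta,k}^\pm$ into real Fourier series in the angular variable. Parseval then expresses both $\int_{A_j}|\nabla \mathfrak h_{\eta,k}^+|^2$ and the reference integral $\int_{B_\eta\setminus B_{\eta/2}}|\nabla \mathfrak h_{\eta,k}^+|^2$ in closed form as sums of the type $\pi\sum_{l\geq1} l\,|h_l^k|^2\, r^{2l}(1-4^{-l})$, evaluated at $r=2^{-j}$ and $r=\eta$ respectively. Since the hypothesis forces $2^{-j}/\eta\leq 1/2$, the elementary inequality $(2^{-j}/\eta)^{2l}\leq (2^{-j}/\eta)^2$ for every $l\geq 1$ yields term-by-term the bound
\begin{equation*}
\int_{A_j}|\nabla \mathfrak h_{\eta,k}^+|^2 \leq \Big(\tfrac{2^{-j}}{\eta}\Big)^{\!2}\int_{B_\eta\setminus B_{\eta/2}}|\nabla \mathfrak h_{\eta,k}^+|^2.
\end{equation*}
Using the inner reference annulus $B_{2\delta_k/\eta}\setminus B_{\delta_k/\eta}$ and the symmetric constraint $2^{-j-1}\geq 2\delta_k/\eta$, the analogous calculation for $\mathfrak h_{\eta,k}^-$ produces a companion bound with prefactor $(\delta_k/(2^{-j}\eta))^2$. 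A direct Fourier orthogonality check shows $\int_0^{2\pi}\nabla \mathfrak h_{\eta,k}^+\!\cdot\!\nabla \mathfrak h_{\eta,k}^-\,d\theta=0$ on every centered circle (the radial and angular cross-terms cancel), so the two frequency ranges decouple in $L^2(A_j)$ and one concludes
\begin{equation*}
\int_{A_j}|\nabla \mathfrak h_{\eta,k}|^2 \leq C\Big[\Big(\tfrac{2^{-j}}{\eta}\Big)^{\!2}+\Big(\tfrac{\delta_k}{2^{-j}\eta}\Big)^{\!2}\Big]\int_{A(\eta,\delta_k)}|\nabla \mathfrak h_{\eta,k}|^2.
\end{equation*}

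\emph{Step 2 (control of the global neck integral).} From \eqref{EQ: Final decomp of na u} I have $\nabla \mathfrak h_{\eta,k}=X_k-\nabla^\perp b_{\eta,k}-\nabla\varphi_{\eta,k}$ on $A(\eta,\delta_k)$, and I bound each piece separately. Lemma \ref{LEMMA: Linfty bd on na u in the necks unif} gives the uniform $L^\infty$ bound $|X_k|\leq C|\nabla u_k|$, hence $\|X_k\|_{L^2(A)}^2\leq C\|\nabla u_k\|_{L^2(A)}^2$. For $\varphi_{\eta,k}$, Wente's inequality applied to \eqref{EQ: Eq for De phi sumed} together with \eqref{EQ: estimate of B 980jt4unJUN} and the same $L^\infty$ bound yields $\|\nabla\varphi_{\eta,k}\|_{L^2(\mathbb C)}\leq C\|\nabla u_k\|_{L^2(A)}^2$, which by the uniform total-energy bound \eqref{EQ: Uniform bound on the energy 8912H2egBeZ} absorbs into $C\|\nabla u_k\|_{L^2(A)}^2$. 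Finally, Lemma \ref{LEMMA: Linfty bd on na u in the necks unif} also gives $\|X_k\|_{L^{p_k}(B_1)}\leq C$ uniformly, so the Hodge decomposition provides $\|\nabla b_{\eta,k}\|_{L^{p_k}(B_1)}\leq C$ uniformly, and H\"older's inequality with the dual pair $(p_k',p_k)$ converts this into
\begin{equation*}
\int_{B_1}|\nabla b_{\eta,k}|^2 \leq \|\nabla b_{\eta,k}\|_{L^{p_k'}(B_1)}\|\nabla b_{\eta,k}\|_{L^{p_k}(B_1)} \leq C\|\nabla b_{\eta,k}\|_{L^{p_k'}(B_1)}.
\end{equation*}
Combining these three estimates with Step 1 yields the claim.

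\emph{Main obstacle.} The delicate point is the scaling mismatch on the right-hand side of the lemma: the squared $L^2$-norm of $\nabla u_k$ versus the un-squared $L^{p_k'}$-norm of $\nabla b_{\eta,k}$. This is reconciled precisely by the H\"older trick above, which works because $p_k'<2<p_k$ and because the $L^\infty$-control on the conformal factor grants uniform $L^{p_k}$-integrability of $\nabla b_{\eta,k}$ via the Hodge decomposition. Checking that all implied constants remain uniform as $p_k\searrow 2$ is routine since all exponents involved stay bounded away from $1$ and $\infty$.
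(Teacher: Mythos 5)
Your proof is correct and follows essentially the same route as the paper's cited source (Lemma~4.4 of \cite{DLRS25}, modeled on Lemma~III.3 of \cite{DGR22}): the Laurent/Parseval term-by-term comparison in Step~1 is the standard dyadic decay estimate for harmonic functions on an annulus, and Step~2's combination of the uniform $L^\infty$ bound on the conformal factor, the $L^2$ Wente inequality for $\nabla\varphi_{\eta,k}$, and the H\"older pairing $\int |\nabla b_{\eta,k}|^2 \le \|\nabla b_{\eta,k}\|_{L^{p_k'}}\|\nabla b_{\eta,k}\|_{L^{p_k}}$ is exactly what produces the asymmetric right-hand side with $\|\nabla b_{\eta,k}\|_{L^{p_k'}(B_1)}$ unsquared. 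Two cosmetic remarks: the phrase ``absorbs into'' is imprecise — what you mean is $\|\nabla u_k\|_{L^2(A)}^4 \le C\|\nabla u_k\|_{L^2(A)}^2$ via the uniform energy bound — and the claim $\|\nabla b_{\eta,k}\|_{L^{p_k}(B_1)}\le C$ tacitly uses that the Helmholtz projection is $L^{p_k}$-bounded with constant uniform as $p_k\searrow 2$ and that the $L^{p_k'}$- and $L^{p_k}$-decompositions agree, both of which are true but worth stating.
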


\begin{proof}
This is the same as Lemma 4.4 in \cite{DLRS25}.
\end{proof}

In the  following lemma we show a sort of entropy condition linking the parameter $p$ and the conformal class of the neck regions. It is actually a consequence of the $\varepsilon$-regularity and the $L^2$-energy quantization as it was already shown in \cite{LZ19}.

\begin{lemma}
\label{LEMMA: Growth of  pk and ln etadelta}
For $\eta>0$ small there holds
\begin{equation}
\lim_{\eta\searrow0}\limsup_{k\rightarrow\infty}\ (p_k-2) \log\left(\frac{\eta^2}{\delta_k}\right)=0.
\end{equation}
\end{lemma}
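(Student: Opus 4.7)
The plan is to combine a Pohozaev-type identity for $p_k$-harmonic maps with the $L^2$-energy quantization (Theorem \ref{THM: L2 en quant of na u}) and the uniform $L^\infty$ bound of Lemma \ref{LEMMA: Linfty bd on na u in the necks unif}, following the strategy of \cite{LZ19}. The key observation is that a logarithmic integration (i.e.\ $\int\,dr/r$) of the Pohozaev identity over $r\in(\delta_k/\eta,\eta)$ extracts precisely the factor $\log(\eta^2/\delta_k)$ we need to control.

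First, I would derive the Pohozaev identity. Testing the Euler--Lagrange equation \eqref{EQ: Euler-Lagrange equations in div form for p harm} against $x\cdot\nabla u_k$, which lies in $T_{u_k}\mathcal N$ and therefore annihilates the second-fundamental-form term on the right-hand side, and integrating by parts on $B_r$, I would obtain an identity of the form
\[
\frac{p_k-2}{p_k}F_k(r) \;=\; G_k(r) + r\,I_k(r) - \frac{r}{p_k}H_k(r),
\]
where $F_k(r)=\int_{B_r}(1+|\nabla u_k|^2)^{p_k/2}\,dx$, $G_k(r)=\int_{B_r}(1+|\nabla u_k|^2)^{p_k/2-1}\,dx$, $H_k(r)=\int_{\partial B_r}(1+|\nabla u_k|^2)^{p_k/2}\,d\sigma$ and $I_k(r)=\int_{\partial B_r}(1+|\nabla u_k|^2)^{p_k/2-1}|\partial_\nu u_k|^2\,d\sigma$. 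Observe that $F_k$ is automatically non-decreasing in $r$.

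Next I would divide by $r$ and integrate $r\in(\delta_k/\eta,\eta)$. On the right, Lemma \ref{LEMMA: Linfty bd on na u in the necks unif} yields $G_k(r)\le Cr^2$, hence $\int G_k/r\,dr\le C\eta^2$; Fubini in polar coordinates combined with Theorem \ref{THM: L2 en quant of na u} gives $\int I_k\,dr\le C\|\nabla u_k\|_{L^2(A(\eta,\delta_k))}^2\to 0$ and $\int H_k\,dr=\int_{A(\eta,\delta_k)}(1+|\nabla u_k|^2)^{p_k/2}\,dx\le C(|A|+\|\nabla u_k\|_{L^2(A)}^2)\to 0$ as $k\to\infty$ and $\eta\searrow 0$. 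On the left, monotonicity of $F_k$ gives $\int_{\delta_k/\eta}^\eta F_k(r)/r\,dr\ge F_k(\delta_k/\eta)\log(\eta^2/\delta_k)$; rescaling $v_k(z)=u_k(\delta_k z)$ yields
\[
F_k(\delta_k/\eta) \;=\; \delta_k^{\,2-p_k}\int_{B_{1/\eta}}(\delta_k^2+|\nabla v_k|^2)^{p_k/2}\,dz,
\]
and since $\delta_k<1$ and $p_k>2$ we have $\delta_k^{2-p_k}\ge 1$, while the $C^\infty_{\mathrm{loc}}$ convergence $v_k\to v_\infty$ to a nontrivial harmonic sphere forces $\int_{B_{1/\eta}}(\delta_k^2+|\nabla v_k|^2)^{p_k/2}\,dz \to E(v_\infty,B_{1/\eta})\ge E(v_\infty)/2>0$ for $\eta$ small and $k$ large, so $F_k(\delta_k/\eta)\ge c>0$ uniformly.

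Combining the estimates yields
\[
(p_k-2)\log(\eta^2/\delta_k) \;\le\; \frac{p_k}{c}\bigl(C\eta^2 + o_k(1)\bigr),
\]
and taking $\limsup_k$ followed by $\lim_{\eta\searrow 0}$---together with the observation that the left-hand side is non-negative as soon as $\delta_k<\eta^2$---forces the limit to be zero. The main technical point I expect to spend care on is writing down the Pohozaev identity in exactly the right form so that the logarithmic integration is possible and productive; once this is set up, it is precisely the uniform bound from Lemma \ref{LEMMA: Linfty bd on na u in the necks unif} and the $L^2$-energy quantization from Theorem \ref{THM: L2 en quant of na u} that render the right-hand side $o(\log(\eta^2/\delta_k))$.
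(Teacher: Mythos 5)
Your argument is correct and is essentially the same as the paper's: the Pohozaev identity you derive and logarithmically integrate over $r\in(\delta_k/\eta,\eta)$ is precisely the content of the external Lemma~A.5 of \cite{DLRS25} that the paper invokes with $r=\delta_k/\eta$, $R=\eta$, and your combination with Lemma~\ref{LEMMA: Linfty bd on na u in the necks unif}, Theorem~\ref{THM: L2 en quant of na u}, and the lower bound on $F_k(\delta_k/\eta)$ coming from the nontrivial bubble $v_\infty$ mirrors the paper's use of the same three ingredients. The only difference is that you write out from scratch the Pohozaev estimate and its logarithmic integration, whereas the paper cites it by reference.
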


\begin{proof}
By \eqref{EQ: Cond of bubb conv ji0wr931jJGA} for large $k$ and small $\eta>0$ there holds
\begin{equation}
\label{EQ: iwnfbiININ12081dn012s}
\norm{\nabla v_\infty}_{L^2(\C)} 
\le 2\norm{\nabla v_\infty}_{L^2(B_\frac{1}{\eta})} 
\le 4\norm{\nabla v_k}_{L^2(B_\frac{1}{\eta})} 
=4 \norm{\nabla u_k}_{L^2(B_\frac{\delta_k}{\eta})}.
\end{equation}
Using \eqref{EQ: iwnfbiININ12081dn012s} and applying Lemma A.5 of \cite{DLRS25} to $u_k$ and the radii $r=\delta_k/\eta$, $R=\eta$ one finds
\begin{equation}
(p_k-2) \log\left(\frac{\eta^2}{\delta_k} \right)
\le C \norm{(1+\abs{\nabla u_k }^2)^{\frac{p_k }{2}-1}}_{L^\infty(A(\eta,\delta_k))} \left( \norm{\nabla u_k }_{L^2(A(\eta,\delta_k))}^2 + \eta^2 \right)
\end{equation}
The claim follows by combining Lemma \ref{LEMMA: Linfty bd on na u in the necks unif} and Theorem \ref{THM: L2 en quant of na u}.

\end{proof}

\begin{corollary}
\label{COROLLARY: Limit of na uk to pk minus 2 eq 1}
\begin{equation}
\lim_{k \rightarrow \infty} \norm{\left(1+\abs{\nabla u_k}^2 \right)^{\frac{p_k}{2}-1}}_{L^\infty(\Sigma)}=1.
\end{equation}
\end{corollary}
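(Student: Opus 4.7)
The plan is to combine the lower trivial bound with an upper bound derived from the standard $L^\infty$ control on $\nabla u_k$ and the entropy relation of Lemma \ref{LEMMA: Growth of  pk and ln etadelta}. Since $p_k > 2$ and $1 + |\nabla u_k|^2 \geq 1$ pointwise, we have $(1+|\nabla u_k|^2)^{p_k/2 - 1} \geq 1$ everywhere on $\Sigma$, so immediately
\[
\liminf_{k\to\infty} \|(1+|\nabla u_k|^2)^{p_k/2 - 1}\|_{L^\infty(\Sigma)} \geq 1.
\]

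For the matching upper bound, I would first invoke the stronger form of the rescaling estimate already used to prove Lemma \ref{LEMMA: Linfty bd on na u in the necks unif}, namely $\|\nabla u_k\|_{L^\infty(\Sigma)} \leq C\,\delta_k^{-1}$. This yields
\[
\|(1+|\nabla u_k|^2)^{p_k/2-1}\|_{L^\infty(\Sigma)} \;\leq\; \bigl(1 + C^2\delta_k^{-2}\bigr)^{p_k/2-1} \;\leq\; (C')^{p_k/2-1}\,\delta_k^{-(p_k-2)},
\]
for $k$ large. The prefactor $(C')^{p_k/2-1}$ tends to $1$ as $p_k\searrow 2$, so the whole problem reduces to showing
\[
\lim_{k\to\infty}(p_k - 2)\log(1/\delta_k) = 0.
\]

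To extract this genuine $k$-limit from the iterated limit of Lemma \ref{LEMMA: Growth of  pk and ln etadelta}, I would for any fixed small $\eta > 0$ decompose
\[
(p_k - 2)\log(1/\delta_k) \;=\; (p_k - 2)\log(\eta^2/\delta_k) \;+\; 2(p_k-2)\log(1/\eta).
\]
The second summand tends to $0$ as $k\to\infty$ (with $\eta$ fixed) simply because $p_k \to 2$. Taking $\limsup_k$ of both sides and then letting $\eta\searrow 0$, Lemma \ref{LEMMA: Growth of  pk and ln etadelta} drives the first summand's contribution to $0$, yielding $\limsup_k (p_k-2)\log(1/\delta_k) \leq 0$. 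Combined with the trivial nonnegativity for $\delta_k < 1$, this gives $\lim_k (p_k-2)\log(1/\delta_k) = 0$, and hence $\delta_k^{-(p_k-2)} \to 1$.

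The only mildly delicate point is the manipulation of iterated limits in the last paragraph; the dependence on the auxiliary parameter $\eta$ has to be absorbed into a term that vanishes uniformly as $k\to\infty$ for each fixed $\eta$, which is exactly what the additive decomposition above accomplishes. No other new ingredient is needed beyond Lemma \ref{LEMMA: Linfty bd on na u in the necks unif} and Lemma \ref{LEMMA: Growth of  pk and ln etadelta}.
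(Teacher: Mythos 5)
Your proof is correct and takes essentially the same route as the paper: both start from the rescaling bound $\|\nabla u_k\|_{L^\infty(\Sigma)}\le C\delta_k^{-1}$, factor out $\delta_k^{2-p_k}$, and reduce to showing $(p_k-2)\log(1/\delta_k)\to 0$ via Lemma~\ref{LEMMA: Growth of  pk and ln etadelta}. The one thing you do that the paper leaves implicit is the careful conversion of the iterated $\lim_{\eta\searrow0}\limsup_{k}$ statement of Lemma~\ref{LEMMA: Growth of  pk and ln etadelta} into a genuine single limit in $k$, using the additive decomposition $(p_k-2)\log(1/\delta_k)=(p_k-2)\log(\eta^2/\delta_k)+2(p_k-2)\log(1/\eta)$ and the fact that $\log(1/\delta_k)$ is $\eta$-independent; this is a worthwhile clarification, since that step is genuinely where the auxiliary parameter $\eta$ is eliminated, but it is a routine bookkeeping argument rather than a new idea.
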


\begin{proof}
As explained in Lemma 4.2 of \cite{DLRS25} one can bound $\|\nabla u_k\|_{L^\infty(\Sigma)} \le C \delta_k^{-1}$ and hence using Lemma \ref{LEMMA: Growth of  pk and ln etadelta} obtain
\begin{equation}
\norm{\left(1+\abs{\nabla u_k}^2 \right)^{\frac{p_k}{2}-1}}_{L^\infty(\Sigma)}
\le (C\delta_k^{-2}+1)^{\frac{p_k}{2}-1}
= \underbrace{(\delta_k^2+C)^{\frac{p_k}{2}-1}}_{\rightarrow 1} \ \underbrace{\delta_k^{2-p_k}}_{\rightarrow 1}.
\end{equation}
\end{proof}

 The new precise control on the energy of $b_{\eta,k}$ developed in Lemma \ref{LEMMA: First est of nabla b in necks} together with the entropy condition as in Lemma \ref{LEMMA: Growth of  pk and ln etadelta} (coming from \cite{LZ19}) allows to suitably control $\nabla b_{\eta,k}$ in the necks:
\begin{lemma}\label{LEMMA: Control of const of na b in necks}
For $k\in\N$ large and $\eta>0$ small there holds
\begin{equation}
\norm{\nabla b_{\eta,k}}_{L^{p_k^\prime}(A(\eta,\delta_k))}
\le C_{\eta,k},
\end{equation}
where
\begin{equation}
\lim_{\eta\searrow0}\limsup_{k\rightarrow\infty}\ \log\left(\frac{\eta^2}{\delta_k}\right) C_{\eta,k}=0.
\end{equation}
\end{lemma}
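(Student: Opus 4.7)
The plan is a direct combination of the two preceding results and does not require any new analysis. First, since $A(\eta,\delta_k)\subset B_1$, monotonicity of the $L^{p_k^\prime}$ norm under restriction gives
$$\norm{\nabla b_{\eta,k}}_{L^{p_k^\prime}(A(\eta,\delta_k))} \le \norm{\nabla b_{\eta,k}}_{L^{p_k^\prime}(B_1)}.$$
Next I would invoke Lemma \ref{LEMMA: First est of nabla b in necks}, which provides the global estimate $\norm{\nabla b_{\eta,k}}_{L^{p_k^\prime}(B_1)} \le C(p_k-2)$ for some constant $C$ independent of $\eta$ and $k$. Setting
$$C_{\eta,k} := C(p_k-2)$$
then establishes the first half of the claim; note that this choice is in fact independent of $\eta$.

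For the quantitative vanishing statement, I would combine the above choice of $C_{\eta,k}$ with the entropy condition from Lemma \ref{LEMMA: Growth of  pk and ln etadelta}. This gives
$$\log\left(\frac{\eta^2}{\delta_k}\right) C_{\eta,k} \;=\; C\,(p_k-2)\,\log\!\left(\frac{\eta^2}{\delta_k}\right),$$
and the right-hand side vanishes in the iterated limit $\lim_{\eta\searrow 0}\limsup_{k\to\infty}$ by Lemma \ref{LEMMA: Growth of  pk and ln etadelta} itself.

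Conceptually, Lemma \ref{LEMMA: First est of nabla b in necks} says that $\nabla b_{\eta,k}$ -- the dual Hodge piece of $X_k$ -- is entirely produced by the subcriticality defect $p_k-2$, as it would vanish in the genuinely harmonic case $p_k=2$; meanwhile Lemma \ref{LEMMA: Growth of  pk and ln etadelta} is an entropy-type inequality (in the spirit of \cite{LZ19}) that controls this defect against the degenerating conformal modulus $\log(\eta^2/\delta_k)$ of the neck. The present lemma is exactly the product of these two controls, so there is no substantive obstacle; the only mildly delicate point is that the gain in the constant comes from a \emph{global} Coifman--Rochberg--Weiss estimate rather than a neck-localised one, and one must verify that the pure restriction to $A(\eta,\delta_k)$ is sufficient for the subsequent applications of this bound, which is the case since only the logarithmic weight $\log(\eta^2/\delta_k)$ will interact with $C_{\eta,k}$ in later sections.
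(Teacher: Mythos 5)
Your proposal is correct and matches the paper's own (one-line) proof exactly: restrict the $L^{p_k'}$ norm from $B_1$ to $A(\eta,\delta_k)$, invoke Lemma \ref{LEMMA: First est of nabla b in necks} to set $C_{\eta,k}=C(p_k-2)$, and conclude via the entropy condition of Lemma \ref{LEMMA: Growth of  pk and ln etadelta}. No gaps.
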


\begin{proof}
The claim follows by combining Lemma \ref{LEMMA: First est of nabla b in necks} and Lemma \ref{LEMMA: Growth of  pk and ln etadelta}.
\end{proof}

\begin{lemma} \label{LEMMA: Lower bound on the mass over diadic annulus with p-2}
There exists a constant $C>0$ such that for $k\in\N$ large and $\eta>0$ small the following holds:
For any $j\in\N$ with $\frac{\delta_k}{\eta}<2^{-j}<\eta$ we have
\begin{equation}
(p_k-2) \le C\left( \int_{A_j} \abs{\nabla u_k}^2 dx + 2^{-2j} \right).
\end{equation}
\end{lemma}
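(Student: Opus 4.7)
The plan is to apply the same Pohozaev-type identity (Lemma A.5 of \cite{DLRS25}) that was used in the proof of Lemma \ref{LEMMA: Growth of  pk and ln etadelta}, but localized to the dyadic annulus $A_j = B_{2^{-j}} \setminus B_{2^{-j-1}}$ rather than to the full neck $A(\eta,\delta_k)$. That lemma, applied to a $p$-harmonic map on an annulus $A(R,r)$, produces an inequality of the shape
\begin{equation*}
(p_k-2)\log(R/r)
\le
C\,\bigl\|(1+|\nabla u_k|^2)^{\frac{p_k}{2}-1}\bigr\|_{L^\infty}
\Bigl(\|\nabla u_k\|_{L^2(A(R,r))}^2 + R^2\Bigr),
\end{equation*}
where the $R^2$ term comes from the $1$ inside $(1+|\nabla u_k|^2)^{p_k/2}$ in the Lagrangian (this is exactly what produced the $\eta^2$ term in Lemma \ref{LEMMA: Growth of  pk and ln etadelta}).

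First I would check that the hypothesis $\delta_k/\eta<2^{-j}<\eta$ guarantees that $A_j\subset A(\eta,\delta_k)$ (with room to spare, so that the boundaries $\partial B_{2^{-j}}$ and $\partial B_{2^{-j-1}}$ lie strictly inside the neck) so that Lemma A.5 of \cite{DLRS25} is applicable on $A_j$ with $R=2^{-j}$ and $r=2^{-j-1}$. Then $\log(R/r) = \log 2$ is a fixed constant, and the boundary term is of order $R^2 = 2^{-2j}$. Thus the inequality above specializes to
\begin{equation*}
(p_k-2)\log 2
\le
C\,\bigl\|(1+|\nabla u_k|^2)^{\frac{p_k}{2}-1}\bigr\|_{L^\infty(\Sigma)}
\Bigl(\int_{A_j}|\nabla u_k|^2\,dx + 2^{-2j}\Bigr).
\end{equation*}

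Finally, by Corollary \ref{COROLLARY: Limit of na uk to pk minus 2 eq 1}, the $L^\infty$ factor is bounded by a universal constant for $k$ large, so after absorbing $\log 2$ into the constant we obtain
\begin{equation*}
(p_k-2) \le C\left(\int_{A_j}|\nabla u_k|^2\,dx + 2^{-2j}\right),
\end{equation*}
which is the claim. The only slightly delicate point is verifying that Lemma A.5 of \cite{DLRS25} is truly local (i.e.\ does not require the annulus to be the full neck and produces only the boundary scale $R$, not the larger outer radius), but this is exactly how the lemma was used on the full neck in Lemma \ref{LEMMA: Growth of  pk and ln etadelta}, and the Pohozaev identity is inherently local so no additional work should be needed.
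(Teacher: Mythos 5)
The key lemma you are invoking, Lemma A.5 of \cite{DLRS25}, does not actually have the shape you assign to it. The Pohozaev-type inequality there has the form
\begin{equation*}
(p_k-2)\log\!\left(\tfrac{R}{r}\right)\,\norm{\nabla u_k}_{L^2(B_r)}^2
\;\le\;
C\,\bigl\|(1+|\nabla u_k|^2)^{\frac{p_k}{2}-1}\bigr\|_{L^\infty}
\Bigl(\norm{\nabla u_k}_{L^2(A(R,r))}^2 + R^2\Bigr),
\end{equation*}
with the extra factor $\norm{\nabla u_k}_{L^2(B_r)}^2$ on the left. You have dropped that factor. You can see why it must be there: if the inequality held in the form you wrote, then $(p_k-2)\log(R/r)$ would be controlled by neck energy with no input from the bubble at all, and the conclusion of this lemma would in fact be an unconditional consequence of the Pohozaev identity — but it is not. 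The same issue is visible already in the proof of Lemma \ref{LEMMA: Growth of  pk and ln etadelta}: there the paper explicitly invokes \eqref{EQ: iwnfbiININ12081dn012s} (the lower bound $\norm{\nabla v_\infty}_{L^2(\C)} \le 4\norm{\nabla u_k}_{L^2(B_{\delta_k/\eta})}$, coming from the fact that the bubble $v_\infty$ is nonconstant) \emph{together with} Lemma A.5 to obtain the displayed inequality. Your proposal mistakes the post-processed inequality for the content of Lemma A.5 itself.

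The missing step is exactly what the paper supplies: after applying Lemma A.5 on $A_j$ with $r=2^{-j-1}$, $R=2^{-j}$ and bounding the $L^\infty$ factor by Lemma \ref{LEMMA: Linfty bd on na u in the necks unif}, one gets
$(p_k-2)\,\norm{\nabla u_k}_{L^2(B_{2^{-j-1}})}^2 \le C\bigl(\int_{A_j}|\nabla u_k|^2\,dx + 2^{-2j}\bigr)$.
One then uses the monotonicity $\norm{\nabla u_k}_{L^2(B_{2^{-j-1}})} \ge \norm{\nabla u_k}_{L^2(B_{\delta_k/\eta})}$ (valid since $2^{-j-1}\ge\delta_k/\eta$) and \eqref{EQ: iwnfbiININ12081dn012s} to bound the left-hand factor from below by the strictly positive constant $\frac{1}{16}\norm{\nabla v_\infty}_{L^2(\C)}^2$. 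This is where the nontriviality of the bubble enters, and it is essential. The rest of your argument (localising to $A_j$, $\log(R/r)=\log 2$, absorbing constants) is fine; it is only this factor that needs to be handled.
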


\begin{proof}
Combining Lemma A.5 of \cite{DLRS25} and Lemma \ref{LEMMA: Linfty bd on na u in the necks unif} we find
\begin{equation}
(p_k-2) \norm{\nabla u_k}_{L^2(B_{2^{-j-1}})}^2
\le C \left( \int_{A_j} \abs{\nabla u_k}^2 dx + 2^{-j} \right)
\end{equation}
To conclude we use $\norm{\nabla u_k}_{L^2(B_{2^{-j-1}})} \ge \norm{\nabla u_k}_{L^2(B_\frac{\delta_k}{\eta})}$ and also \eqref{EQ: iwnfbiININ12081dn012s}.
\end{proof}

\begin{lemma}\label{LEMMA: Lemma on the start of dyadic est}
There exists a constant $C>0$ such that for $k\in\N$ large and $\eta>0$ small the following holds:
For any $j\in\N$ with $\frac{\delta_k}{\eta}<2^{-j}<\eta$ we have
\begin{equation}
\int_{A_j} \abs{\nabla u_k}^2 dx 
\le C \left( \norm{\nabla b_{\eta,k}}_{L^{p_k^\prime}(A_j)}^2 + \int_{A_j}\abs{\nabla \varphi_{\eta,k}}^2 dx + \int_{A_j}\abs{\nabla \mathfrak h_{\eta,k}}^2 dx+ 2^{-2j(p_k-1)} \right).
\end{equation}
\end{lemma}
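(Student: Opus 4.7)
The idea is to start from the Hodge-type decomposition \eqref{EQ: Final decomp of na u} of $X_k$ on the neck $A(\eta,\delta_k)$ and estimate each of its four pieces separately on the dyadic annulus $A_j$. Two pointwise facts do most of the work. From the definition $X_k = (1+|\nabla u_k|^2)^{(p_k-2)/2}\nabla u_k$, a case split on whether $|\nabla u_k|\le 1$ or $|\nabla u_k|\ge 1$ yields
\begin{equation}
|\nabla u_k|^{p_k}\le|X_k|^{p_k'},
\end{equation}
and also the sharper pointwise bound $|\nabla u_k|^2\le C|X_k|^{2/(p_k-1)}$, whose uniform constant uses the $L^\infty$-control of $(1+|\nabla u_k|^2)^{(p_k-2)/2}$ from Lemma \ref{LEMMA: Linfty bd on na u in the necks unif}. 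Combined with the elementary inequality $|\nabla u_k|^2\le C(|\nabla u_k|^{p_k}+1)$ (valid since $p_k\ge 2$), these give
\begin{equation}
\int_{A_j}|\nabla u_k|^2\,dx\le C|A_j|+C\int_{A_j}|X_k|^{p_k'}\,dx.
\end{equation}

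Next, substituting $X_k=\nabla^\perp b_{\eta,k}+\nabla\varphi_{\eta,k}+\nabla\mathfrak h_{\eta,k}^++\nabla\mathfrak h_{\eta,k}^-$ and applying the triangle inequality for the $p_k'$-power,
\begin{equation}
\int_{A_j}|X_k|^{p_k'}\,dx\le C\int_{A_j}\bigl(|\nabla b_{\eta,k}|^{p_k'}+|\nabla\varphi_{\eta,k}|^{p_k'}+|\nabla\mathfrak h_{\eta,k}|^{p_k'}\bigr)\,dx.
\end{equation}
For the $\nabla\varphi_{\eta,k}$ and $\nabla\mathfrak h_{\eta,k}$ pieces, Hölder with exponent $2/p_k'>1$ gives $\bigl(\int_{A_j}|\nabla\varphi_{\eta,k}|^2\bigr)^{p_k'/2}|A_j|^{(2-p_k')/2}$, and Young's inequality $a^\theta b^{1-\theta}\le\theta a+(1-\theta)b$ with $\theta=p_k'/2$ converts this to $\int_{A_j}|\nabla\varphi_{\eta,k}|^2$ plus a geometric error in $|A_j|$ (similarly for $\nabla\mathfrak h_{\eta,k}$). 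For the $\nabla b_{\eta,k}$ piece, $\int_{A_j}|\nabla b_{\eta,k}|^{p_k'}=\|\nabla b_{\eta,k}\|_{L^{p_k'}(A_j)}^{p_k'}$, and another Young inequality converts this $p_k'$-power into the squared norm $\|\nabla b_{\eta,k}\|_{L^{p_k'}(A_j)}^2$ at the cost of an additional $|A_j|$-type error.

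The main obstacle is arranging the Hölder and Young exponents so that the accumulated geometric errors add up to $C|A_j|^{p_k-1}=C\,2^{-2j(p_k-1)}$, rather than the coarser $C|A_j|=C\,2^{-2j}$ one would obtain from a brute application of Hölder. The sharper scaling reflects the Hölder duality between $p_k$ and $p_k'$; to recover it one must use the pointwise inequality $|\nabla u_k|^2\le C|X_k|^{2/(p_k-1)}$ in place of $|\nabla u_k|^{p_k}\le|X_k|^{p_k'}$ where it helps, and pick the conjugate exponents in the Young inequalities so that the residual power of $|A_j|$ equals $p_k-1$. Verifying that all constants appearing in these estimates remain bounded as $p_k\searrow 2$ is then a routine bookkeeping exercise.
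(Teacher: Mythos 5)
Your outline starts from the same decomposition of $X_k$, but it never closes the gap you yourself identify: the route you lay out produces an additive error of order $|A_j|\approx 2^{-2j}$, whereas the lemma demands the strictly smaller $2^{-2j(p_k-1)}$. Contrary to what you claim, this is not a matter of choosing better Hölder and Young exponents. If in $a^\theta b^{1-\theta}\le\theta\epsilon\,a+(1-\theta)\epsilon^{-\theta/(1-\theta)}b$ you tune $\epsilon$ so that the error term shrinks to $|A_j|^{p_k-1}$, the weight $\epsilon$ in front of the ``good'' $L^2$-integral becomes an unbounded, $j$-dependent factor, which the lemma cannot afford (it claims a single constant $C$ uniform over the whole range $\frac{\delta_k}{\eta}<2^{-j}<\eta$). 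Likewise, swapping $|\nabla u_k|^{p_k}\le|X_k|^{p_k'}$ for $|\nabla u_k|^2\le|X_k|^{2/(p_k-1)}$ just shifts where Hölder is applied; any subsequent Young step still produces a pure $|A_j|$ residual.

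The missing ingredient is Lemma~\ref{LEMMA: Lower bound on the mass over diadic annulus with p-2}, which supplies the lower bound $p_k-2\le C\big(\int_{A_j}|\nabla u_k|^2\,dx+2^{-2j}\big)$ on every dyadic annulus in the neck. The paper's proof uses Minkowski and Hölder to get $\int_{A_j}|\nabla u_k|^{p_k}+2^{-jp_k}\ge C\big[\int_{A_j}|\nabla u_k|^2+2^{-2j}\big]^{p_k/2}$, then splits the exponent as $\tfrac{p_k}{2}=\tfrac{p_k'}{2}+\tfrac{p_k(p_k-2)}{2(p_k-1)}$ and invokes that lower bound to show the second factor $\big[\cdot\big]^{\frac{p_k(p_k-2)}{2(p_k-1)}}$ is bounded below by an absolute constant as $p_k\searrow2$. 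Only after this absorption can one invert to the power $2/p_k'$; it is precisely this inversion that converts $2^{-jp_k}$ into $2^{-2j(p_k-1)}$, not any Young inequality. Your proposal contains no analogue of this step, so the final estimate is genuinely out of reach by the ``bookkeeping'' you describe.

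As a smaller remark, the pointwise bound $|\nabla u_k|^2\le|X_k|^{2/(p_k-1)}$ holds with constant $1$ directly from $|X_k|\ge|\nabla u_k|^{p_k-1}$; no appeal to Lemma~\ref{LEMMA: Linfty bd on na u in the necks unif} is needed there.
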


\begin{proof}
By Minkowski's inequality and H\"older's inequality
\begin{equation}
\begin{aligned}
\label{EQ: ojwnfe9138inqNIJ21}
\int_{A_j} \abs{\nabla u_k}^{p_k} dx + 2^{-jp_k}
&\ge 2^{1-\frac{p_k}{2}} \left[\left( \int_{A_j} \abs{\nabla u_k}^{p_k} dx\right)^{\frac{2}{p_k}} + 2^{-2j} \right]^{\frac{p_k}{2}} \\
&\ge C \left[ \int_{A_j} \abs{\nabla u_k}^2 dx + 2^{-2j} \right]^{\frac{p_k}{2}} \\
&= C \left[ \int_{A_j} \abs{\nabla u_k}^2 dx + 2^{-2j} \right]^{\frac{p_k^\prime}{2}} \left[ \int_{A_j} \abs{\nabla u_k}^2 dx + 2^{-2j} \right]^{\frac{p_k(p_k-2)}{2(p_k-1)}},
\end{aligned}
\end{equation}
where in the last line we used that $\frac{p_k}{2} = \frac{p_k^\prime}{2} + \frac{p_k(p_k-2)}{2(p_k-1)}$.
By Lemma \ref{LEMMA: Lower bound on the mass over diadic annulus with p-2} we get
\begin{equation}
\label{EQ: Ubnun9e023nfuJN12s1}
\left[ \int_{A_j} \abs{\nabla u_k}^2 dx + 2^{-2j} \right]^{\frac{p_k(p_k-2)}{2(p_k-1)}}
\ge \Big[C (p_k-2) \Big]^{\frac{p_k(p_k-2)}{2(p_k-1)}}
= \underbrace{C^{\frac{p_k(p_k-2)}{2(p_k-1)}}}_{\rightarrow1}  \Big[ \underbrace{(p_k-2)^{(p_k-2)}}_{\rightarrow 1} \Big]^{\frac{p_k}{2(p_k-1)}}.
\end{equation}
Combining \eqref{EQ: ojwnfe9138inqNIJ21} and \eqref{EQ: Ubnun9e023nfuJN12s1}
and using Minkowski's inequality as well as H\"older's inequality one bounds
\begin{equation}
\begin{aligned}
\int_{A_j} \abs{\nabla u_k}^2 dx + 2^{-2j}
&\le C \left[\int_{A_j} \abs{\nabla u_k}^{p_k} dx + 2^{-jp_k} \right]^{\frac{2}{p_k^\prime}} \\
&\le C\left(\int_{A_j} \abs{\nabla u_k}^{p_k} dx  \right)^{\frac{2}{p_k^\prime}} + C\ 2^{-2j(p_k-1)} \\
&\le C\left(\int_{A_j} \abs{X_k}^{p_k^\prime} dx  \right)^{\frac{2}{p_k^\prime}} + C\ 2^{-2j(p_k-1)} \\
&\le C \left( \norm{\nabla b_{\eta,k}}_{L^{p_k^\prime}(A_j)}^2 + \norm{\nabla \varphi_{\eta,k}}_{L^{p_k^\prime}(A_j)}^2 + \norm{\nabla \mathfrak h_{\eta,k}}_{L^{p_k^\prime}(A_j)}^2 + 2^{-2j(p_k-1)} \right) \\
&\le C \left( \norm{\nabla b_{\eta,k}}_{L^{p_k^\prime}(A_j)}^2 + \norm{\nabla \varphi_{\eta,k}}_{L^{2}(A_j)}^2 + \norm{\nabla \mathfrak h_{\eta,k}}_{L^{2}(A_j)}^2 + 2^{-2j(p_k-1)} \right).
\end{aligned}
\end{equation}
\end{proof}

\begin{proof}[Proof (of Theorem \ref{THEOREM: on Pointwise Estimate of the Gradient in the Necks p harm})]
The proof of Theorem \ref{THEOREM: on Pointwise Estimate of the Gradient in the Necks p harm} is very similar to the proof of Theorem 4.1 in \cite{DLRS25}.
We leave details in lengthy and elementary computations out and refer to \cite{DLRS25} for the full details.
Let us introduce
\begin{equation}
\begin{aligned}
a_j & \coloneqq \int_{A_j} \abs{\na\widetilde u_k}^2  dx, \\
b_j & \coloneqq c_0\left[ 2^{-2j(p_k-1)}+ \gamma^j \norm{\nabla u_k}_{L^2(A(\eta,\delta_k))}^2 + \norm{\nabla \mathfrak h_{\eta,k}}_{L^{2}(A_j)}^2+ \norm{\nabla b_{\eta,k}}_{L^{p_k^\prime}(A_j)}^2\right], \\
\varepsilon_0 &= \varepsilon_0(\et,\de_k) \coloneqq  c_0 \int_{A(\eta,\delta_k)} \abs{\nabla u_k}^2  dx.
\end{aligned}
\end{equation}
Combining Lemma \ref{LEMMA: Lemma on the start of dyadic est} and Lemma \ref{LEMMA: Dyadic est of na phi start} one has
\begin{equation}
a_j \le b_j
+ \epv_0 \sum_{l=0}^\infty \gamma^{\abs{l-j}} a_l, \qquad \forall j \in [s_1,s_2]\coloneqq \Bigg[\bigg\lceil-\log_2\left(\frac{\eta}{2}\right)\bigg\rceil, \bigg\lfloor-\log_2\left(\frac{4\delta_k}{\eta}\right)\bigg\rfloor \Bigg].
\end{equation}
Now we apply Lemma G.1 of \cite{DGR22} for some fixed $j \in \{s_1,\dots,s_2 \}$. 
Then for $\gamma<\mu<1$ there exists $C_{\mu, \gamma}>0$ such that
\begin{equation}
\begin{aligned}
\sum_{l=s_1}^{s_2} \mu^{|l-j|} a_{l} 
&= \sum_{l=s_1}^{s_2} \mu^{|l-j|} b_{l} + C_{\mu, \gamma}\ \varepsilon_0 \sum_{l=s_1}^{s_2} \mu^{|l-j|} a_{l} \\
&\hspace{8mm} + C_{\mu, \gamma}\ \varepsilon_0 \left( \mu^{|s_1-1-j|} a_{s_1-1} + \mu^{|s_1-2-j|} a_{s_1-2}
+ \mu^{|s_2+1-j|} a_{s_2+1} + \mu^{|s_2+2-j|} a_{s_2+2}\right),
\end{aligned}
\end{equation}
where we used the fact that $a_l=0$ for any $l \le s_1-3$ or $l \ge s_2+3$ coming from \eqref{EQ: wuinuifnvuUNIDWU0139jr1}.
By Theorem \ref{THM: L2 en quant of na u} 
\begin{equation}
\lim_{\eta\searrow0} \limsup_{k\rightarrow \infty}\ \varepsilon_0(\eta,\delta_k)=0.
\end{equation}
Hence, we can assume that for $\eta>0$ small enough and for $k \in \N$ large enough we have
\begin{equation}
C_{\mu,\gamma} \ \epv_0 < \frac{1}{2}.
\end{equation}
allowing to absorb the sum to the left-hand side
\begin{equation}
\begin{aligned}
\label{EQ: uwnefijnwefh0912ikjmOAJ9201myx}
&\sum_{l=s_1}^{s_2} \mu^{|l-j|} a_{l} \\
&\hspace{7mm} \le C \sum_{l=s_1}^{s_2} \mu^{|l-j|} b_{l} + C \ \varepsilon_0 \left( \mu^{j-s_1} a_{s_1-1} + \mu^{j-s_1} a_{s_1-2}
+ \mu^{s_2-j} a_{s_2+1} + \mu^{s_2-j} a_{s_2+2}\right) \\
&\hspace{7mm} \le C \sum_{l=s_1}^{s_2} \mu^{|l-j|} b_{l} + C \ \varepsilon_0 \left( \mu^{j-s_1} + \mu^{s_2-j}\right)\norm{\nabla u_k}_{L^2(A(\eta,\delta_k))}^2,
\end{aligned}
\end{equation}
where in the last line we used that for any $i$ one has $a_i \le \norm{\na\widetilde u_k}_{L^2(\C)}^2 \le C\norm{\nabla u_k}_{L^2(A(\eta,\delta_k))}^2$.
We introduce $\beta \coloneqq - \log_2 \mu\in (0,-\log_2 \gamma)\subset(0,1)$ such that
\begin{equation}
\mu = 2^{-\beta}.
\end{equation}
Now we focus on the bound of the expression
\begin{equation}
\begin{aligned}
\label{EQ: we832hrnu31rHBI121w12lexs}
&\sum_{l=s_1}^{s_2} \mu^{|l-j|} b_{l}\\
&\hspace{12mm} = c_0\sum_{l=s_1}^{s_2} \mu^{|l-j|} \left[ 2^{-2l(p_k-1)} + \gamma^l \norm{\nabla u_k}_{L^2(A(\eta,\delta_k))}^2 + \norm{\nabla \mathfrak h_{\eta,k}}_{L^{2}(A_l)}^2+ \norm{\nabla b_{\eta,k}}_{L^{p_k^\prime}(A_l)}^2\right].
\end{aligned}
\end{equation}
\textbf{1.)}
\begin{equation}
\begin{aligned}
\sum_{l=s_1}^{s_2} \mu^{|l-j|} 2^{-2l(p_k-1)}
\le \mu^{j}\sum_{l=s_1}^{j} \mu^{-l} 2^{-2l}
+ \mu^{-j} \sum_{l=j+1}^{s_2} \mu^{l} 2^{-2l} \le \left( \frac{1}{4\mu}\right)^{s_1-1}  2^{-\beta j} + 2^{-s_1} 2^{-j}.
\end{aligned}
\end{equation}
\textbf{2.)}
\begin{equation}
\begin{aligned}
\sum_{l=s_1}^{s_2} \mu^{\abs{l-j}} \gamma^l 
&= \mu^{j} \sum_{l=s_1}^{j} \left(\frac{\gamma}{\mu} \right)^l + \mu^{-j} \sum_{l=j+1}^{s_2} \left(\mu\ga\right)^l 
\le \mu^{j}  + \gamma^j \le 2\mu^j =2\ 2^{-\beta j}.
\end{aligned}
\end{equation}
\textbf{3.)}
With Lemma \ref{LEMMA: on the est of na h in necks} we get
\begin{equation}
\begin{aligned}
&\hspace{-10mm} \sum_{l=s_1}^{s_2} \mu^{|l-j|} \norm{\nabla \mathfrak h_{\eta,k}}_{L^{2}(A_l)}^2 \\
&\le C \left(\norm{\nabla u_k}_{L^2(A(\eta,\delta_k))}^2 + \norm{\nabla b_{\eta,k}}_{L^{p_k^\prime}(B_1)} \right) \sum_{l=s_1}^{s_2} \mu^{|l-j|} \left[ \left( \frac{2^{-l}}{\et}\right)^2 + \left( \frac{\de_k}{2^{-l}\et}\right)^2 \right] ,
\end{aligned}
\end{equation}
and compute
\begin{equation}
\begin{aligned}
&\hspace{-12mm}\sum_{l=s_1}^{s_2} \mu^{\abs{l-j}} \left[ \left( \frac{2^{-l}}{\et}\right)^2 + \left( \frac{\de_k}{2^{-l}\et}\right)^2 \right] \\
&= \frac{1}{\et^2} \left[ \sum_{l=s_1}^{j} \mu^{j}\left( \left( \frac{1}{4\mu} \right)^l + \left( \frac{4}{\mu} \right)^l \de_k^2 \right)
+  \sum_{l=j+1}^{s_2} \mu^{-j}\left( \left( \frac{\mu}{4} \right)^l + \left( 4\mu \right)^l \de_k^2 \right) \right] \\
&\le C \left[ \left( \frac{2^{-j}}{\et} \right)^\be + \left( \frac{\de_k}{2^{-j}\et} \right)^\be \right].
\end{aligned}
\end{equation}
where in the last line we used that $\mu = 2^{-\beta}\in (\frac{1}{4},1)$, $\be <2$, $\frac{2^{-j}}{2\et}\le 1$, $\frac{\de_k}{2^{-j}\et}\le 1$, $\frac{2^{-s_1}}{\eta}\le C$ and $\frac{\delta_k}{2^{-s_1}\eta}\le C$.
\\ \textbf{4.)}
We bound using \eqref{EQ: HIninfwjei921ejir2}
\begin{equation}
\begin{aligned}
\label{EQ: 0921niwebfweijwefn8913}
\sum_{l=s_1}^{s_2} \mu^{|l-j|}  \norm{\nabla b_{\eta,k}}_{L^{p_k^\prime}(A_l)}^2
&\le \norm{\nabla b_{\eta,k}}_{L^{p_k^\prime}(A(\eta,\delta_k))}^{2-p_k^\prime} \sum_{l=s_1}^{s_2} \mu^{|l-j|}  \norm{\nabla b_{\eta,k}}_{L^{p_k^\prime}(A_l)}^{p_k^\prime} \\
&\le (C (p_k-2))^{2-p_k^\prime} \norm{\nabla b_{\eta,k}}_{L^{p_k^\prime}(A(\eta,\delta_k))}^{p_k^\prime},
\end{aligned}
\end{equation}
where in the last line we used additivity of the integral.
As $2-p_k^\prime=\frac{p_k-2}{p_k-1}$ we have
\begin{equation}
(C (p_k-2))^{2-p_k^\prime} 
\le C (p_k-2)^{2-p_k^\prime}
= C (\underbrace{(p_k-2)^{p_k-2}}_{\rightarrow 1})^\frac{1}{p_k-1}
\le C
\end{equation}
and also
\begin{equation}
\begin{aligned}
\norm{\nabla b_{\eta,k}}_{L^{p_k^\prime}(A(\eta,\delta_k))}^{p_k^\prime}
& \le \Big[\norm{\nabla b_{\eta,k}}_{L^{p_k^\prime}(A(\eta,\delta_k))} + (p_k-2)\Big]^{p_k^\prime} \\
&= \Big[\norm{\nabla b_{\eta,k}}_{L^{p_k^\prime}(A(\eta,\delta_k))} + (p_k-2)\Big]^{2} \ \Big[\norm{\nabla b_{\eta,k}}_{L^{p_k^\prime}(A(\eta,\delta_k))} + (p_k-2)\Big]^{\overbrace{p_k^\prime-2}^{<0}} \\
&\le C \Big[\norm{\nabla b_{\eta,k}}_{L^{p_k^\prime}(A(\eta,\delta_k))}^2 + (p_k-2)^2\Big] \ \underbrace{\Big[p_k-2\Big]^{p_k^\prime-2}}_{\rightarrow 1}.
\end{aligned}
\end{equation}
With \eqref{EQ: 0921niwebfweijwefn8913} we get
\begin{equation}
\begin{aligned}
\sum_{l=s_1}^{s_2} \mu^{|l-j|}  \norm{\nabla b_{\eta,k}}_{L^{p_k^\prime}(A_l)}^2
\le C \Big[\norm{\nabla b_{\eta,k}}_{L^{p_k^\prime}(A(\eta,\delta_k))}^2 + (p_k-2)^2\Big] \eqqcolon \left(C_{\eta,k}\right)^2,
\end{aligned}
\end{equation}
where with Lemma \ref{LEMMA: Control of const of na b in necks} and Lemma \ref{LEMMA: Growth of  pk and ln etadelta} one has $\lim_{\eta\searrow0}\limsup_{k\rightarrow\infty}\ \log\left(\frac{\eta^2}{\delta_k}\right) C_{\eta,k}=0$. \\
Putting these bounds \textbf{1.) - 4.)} together and with \eqref{EQ: uwnefijnwefh0912ikjmOAJ9201myx}, \eqref{EQ: we832hrnu31rHBI121w12lexs} we find
\begin{equation}
\label{EQ: INuhnfue91uqfwUNqw}
\begin{aligned}
\int_{A_j} \abs{\nabla u_k}^2 dx 
&=a_j \le \sum_{l=s_1}^{s_2} \mu^{|l-j|} a_{l} \\
&\le \left(\norm{\nabla u_k}_{L^2(A(\eta,\delta_k))}^2 + \norm{\nabla b_{\eta,k}}_{L^{p_k^\prime}(B_1)} \right) \Bigg[ \left( \frac{2^{-j}}{\et} \right)^\be + \left( \frac{\de_k}{2^{-j}\et} \right)^\be \Bigg] \\
&\hspace{35mm} + C \norm{\nabla u_k}_{L^2(A(\eta,\delta_k))}^2 \Bigg[ \mu^{j-s_1} + \mu^{s_2-j} + 2^{-\beta j} \Bigg] \\
&\hspace{45mm} + C \Bigg[ \left( \frac{1}{4\mu}\right)^{s_1} \ 2^{-\beta j} + 2^{-s_1} 2^{-j} + \left(C_{\eta,k}\right)^2 \Bigg].
\end{aligned}
\end{equation}
One has the following:
\begin{equation}
\begin{gathered}
\mu^{j-s_1} =\left(2^{-\beta(j-s_1)} \right) = \left(2^{-j} 2^{s_1} \right)^\beta 
\le C \left( \frac{2^{-j}}{\eta} \right)^\beta \\
\mu^{s_2-j} =\left(2^{-\beta(s_2-j)} \right) = \left(2^{j}\ 2^{-s_2} \right)^\beta 
\le C \left( \frac{\delta_k}{2^{-j}\eta} \right)^\beta \\
2^{-\beta j}
\le \left( \frac{2^{-j}}{\eta} \right)^\beta \\
\left( \frac{1}{4\mu}\right)^{s_1} \ 2^{-\beta j}
\le C\left( 2^{\beta-2}\right)^{-\log_2(\eta)} 2^{-\beta j}
\le C \eta^{2-\beta} 2^{-\beta j}
=  C \eta^{2} \left( \frac{2^{-j}}{\eta} \right)^\beta \\
2^{-s_1} 2^{-j} \le C \ 2^{\log_2 \eta} 2^{-j}
\le C \ \eta \left( \frac{2^{-j}}{\eta} \right)
\le C \ \eta \left( \frac{2^{-j}}{\eta} \right)^\beta
\end{gathered}
\end{equation}
Going back to \eqref{EQ: INuhnfue91uqfwUNqw} we have found
\begin{equation}
\label{EQ: ijnvewiUN1298d1}
\begin{aligned}
&\int_{A_j} \abs{\nabla u_k}^2 dx \\
&\hspace{12mm}\le C \left( \norm{\nabla u_k}_{L^2(A(\eta,\delta_k))}^2+ \norm{\nabla b_{\eta,k}}_{L^{p_k^\prime}(B_1)} + \eta+\eta^2 \right)
\left( \left( \frac{2^{-j}}{\et} \right)^\be + \left( \frac{\de_k}{2^{-j}\et} \right)^\be \right) + \left(C_{\eta,k}\right)^2
\end{aligned}
\end{equation}
Let $x \in A_j$.
Put $r_x = \abs{x}/4$.
One has $B_{r_x}(x) \subset A_{j-1} \cup A_{j} \cup A_{j+1}$.
By $\varepsilon$-regularity Lemma \ref{LEMMA: epsilon reg sacks uhlenbeck for p harm} we can bound
\begin{equation}
\label{EQ: iwunfeiuNnquf39831dasdas}
\begin{aligned}
\abs{x}^2 \abs{\nabla u_k(x)}^2 
&= 2^6 \left(\frac{r_x}{2}\right)^2 \abs{\nabla u_k(x)}^2 \le 2^6 \left(\frac{r_x}{2}\right)^2 \norm{\nabla u_k}_{L^{\infty}(B_{r_x/2}(x))}^2 \le C \ \norm{\nabla u_k}_{L^2(B_{r_x}(x))}^2 \\
&\le C \left( \norm{\nabla u_k}_{L^2(A_{j-1})}^2 + \norm{\nabla u_k}_{L^2(A_{j})}^2 + \norm{\nabla u_k}_{L^2(A_{j+1})}^2 \right).
\end{aligned}
\end{equation}
Combining \eqref{EQ: ijnvewiUN1298d1} and \eqref{EQ: iwunfeiuNnquf39831dasdas} with the fact that $2^{-j-1}\le \abs{x}\le 2^{-j}$ we get.
\begin{equation}
\abs{x}^2 \abs{\nabla u_k(x)}^2 
\le C \left( \norm{\nabla u_k}_{L^2(A(\eta,\delta_k))}^2+ \norm{\nabla b_{\eta,k}}_{L^{p_k^\prime}(B_1)} + \eta+\eta^2 \right)
\left( \left( \frac{\abs{x}}{\et} \right)^\be + \left( \frac{\de_k}{\abs{x}\et} \right)^\be \right) + \left(C_{\eta,k}\right)^2
\end{equation}
With Theorem \ref{THM: L2 en quant of na u}, Lemma \ref{LEMMA: First est of nabla b in necks} and Lemma \ref{LEMMA: Control of const of na b in necks} for all $x \in A_j$ we have
\begin{equation}
\label{EQ: jwiefnuIN301nidqw}
\abs{x}^2 \abs{\nabla u_k(x)}^2 \le \left[ \left( \frac{\abs{x}}{\et} \right)^\be + \left( \frac{\de_k}{\et \abs{x}} \right)^\be \right]\boldsymbol{\epsilon}_{\eta, \delta_k} +  \boldsymbol{\mathrm c}_{\eta, \delta_k},
\end{equation}
where
\begin{equation}
\lim_{\eta\searrow0}\limsup_{k\rightarrow\infty}\boldsymbol{\epsilon}_{\eta, \delta_k}=0,
\qquad \text{and} \qquad
\lim_{\eta\searrow0}\limsup_{k\rightarrow\infty}\boldsymbol{\mathrm c}_{\eta, \delta_k} \log^2\left(\frac{\eta^2}{\delta_k}\right) =0.
\end{equation}
Let now $x\in A\left(\frac{\eta}{4},\delta_k\right)$.
Then we can find some $j\in \N$ such that $2^{-j-1}\le \abs{x}\le 2^{-j}$.
But then $\frac{4\delta_k}{\eta} \le \abs{x} \le 2^{-j} \le 2 \abs{x} \le \frac{\eta}{2}$.
Therefore $j \in \{s_1, \dots,s_2 \}$ and estimate \eqref{EQ: jwiefnuIN301nidqw} is valid for $x\in A\left(\frac{\eta}{4},\delta_k\right)$.
This completes the proof.
\end{proof}

\vspace{10mm}
\section{Stability of the Morse Index}

In this section we finally show the upper semicontinuity of the Morse index plus nullity for Sacks-Uhlenbeck sequences to a homogeneous manifold, more precisely
\begin{theorem}
\label{THM: usc morse index sacks-uhlenbeck with one bubble}
For $k\in \N $ large there holds
\begin{equation}
\operatorname{Ind}_{E_{p_k}}(u_k) + \operatorname{Null}_{E_{p_k}}(u_k)
\le \operatorname{Ind}_{E}(u_\infty) + \operatorname{Null}_{E}(u_\infty) + \operatorname{Ind}_{E}(v_\infty) + \operatorname{Null}_{E}(  v_\infty)
\end{equation}
\end{theorem}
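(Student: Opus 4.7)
To prove Theorem \ref{THM: usc morse index sacks-uhlenbeck with one bubble} I would argue by contradiction, following the scheme of \cite{DGR22,DLRS25} but plugging in the homogeneous-target neck estimate of the previous section. Suppose that, along a subsequence (not relabelled),
\begin{equation*}
\operatorname{Ind}_{E_{p_k}}(u_k) + \operatorname{Null}_{E_{p_k}}(u_k) \ge N+1, \qquad N \coloneqq \operatorname{Ind}_{E}(u_\infty) + \operatorname{Null}_{E}(u_\infty) + \operatorname{Ind}_{E}(v_\infty) + \operatorname{Null}_{E}(v_\infty).
\end{equation*}
Diagonalising the Jacobi operator associated to $Q_{u_k}$ on $V_{u_k}$, I would select an orthonormal family $w_k^0,\dots,w_k^N \in V_{u_k}$ of eigenfunctions corresponding to the lowest $N+1$ eigenvalues, which by assumption are non-positive, so that $Q_{u_k}|_{\spn\{w_k^0,\dots,w_k^N\}} \le 0$.

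Fix $\eta \in (0,1)$ small and decompose $\Sigma$ into the macroscopic piece $\Omega_\eta^{\mathrm{mac}} \coloneqq \Sigma \setminus B_\eta(q)$, the neck $A(\eta,\delta_k)$, and the bubble piece $\Omega_{\eta,k}^{\mathrm{bub}} \coloneqq B_{\delta_k/\eta}(x_k)$. On $\Omega_\eta^{\mathrm{mac}}$ the smooth convergence $u_k \to u_\infty$ combined with standard elliptic estimates for the Jacobi equation yields, up to extraction, weak $W^{1,2}$-limits $w_\infty^i \in V_{u_\infty}$ of $w_k^i|_{\Omega_\eta^{\mathrm{mac}}}$; likewise the rescaled variations $\widetilde w_k^i(z) \coloneqq w_k^i(x_k + \delta_k z)$ converge on $B_{1/\eta}(0)$ to variations $v_\infty^i \in V_{v_\infty}$. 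I would introduce the three $(N+1)\times(N+1)$ Gram matrices $G_k^{\mathrm{mac}}, G_k^{\mathrm{neck}}, G_k^{\mathrm{bub}}$ built from the natural $L^2$-pairings of the $w_k^i$ on each piece, which satisfy $G_k^{\mathrm{mac}} + G_k^{\mathrm{neck}} + G_k^{\mathrm{bub}} = \mathrm{Id}$. The goal is to prove $G_k^{\mathrm{neck}}\to 0$; after a simultaneous diagonalisation of the two remaining blocks in the limit, this produces $N+1$ linearly independent admissible variations of $u_\infty$ and $v_\infty$ on which $Q_{u_\infty} + Q_{v_\infty} \le 0$, contradicting the definition of $N$.

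The crux is therefore the neck analysis. The only possibly negative contribution to $Q_{u_k}(w_k)$ on $A(\eta,\delta_k)$ is
\begin{equation*}
-p_k \int_{A(\eta,\delta_k)} \left(1+\abs{\nabla u_k}^2\right)^{\frac{p_k}{2}-1} \mathbb I_{u_k}(\nabla u_k, \nabla u_k) \cdot \mathbb I_{u_k}(w_k,w_k) \, dvol_\Sigma,
\end{equation*}
which by boundedness of $\mathbb I$ and Lemma \ref{LEMMA: Linfty bd on na u in the necks unif} is controlled pointwise by $C \abs{\nabla u_k}^2 \abs{w_k}^2$. Inserting the pointwise bound $\abs{x}^2\abs{\nabla u_k(x)}^2 \le [(\abs{x}/\eta)^\beta + (\delta_k/(\eta\abs{x}))^\beta]\,\boldsymbol{\epsilon}_{\eta,\delta_k} + \boldsymbol{\mathrm c}_{\eta,\delta_k}$ from Theorem \ref{THEOREM: on Pointwise Estimate of the Gradient in the Necks p harm} and applying a Hardy-type inequality on the annulus to turn $\int \abs{w_k}^2/\abs{x}^2$ into $\int \abs{\nabla w_k}^2$ at the cost of a $\log^2(\eta^2/\delta_k)$ factor, the decays $\boldsymbol{\mathrm c}_{\eta,\delta_k}\log^2(\eta^2/\delta_k)\to 0$ and $\boldsymbol{\epsilon}_{\eta,\delta_k}\to 0$ absorb this negative contribution into the non-negative gradient term $p_k\int_{A(\eta,\delta_k)}(1+\abs{\nabla u_k}^2)^{p_k/2-1}\abs{\nabla w_k}^2$, producing $Q_{u_k}(w_k) \ge (1-o(1))\int_{A(\eta,\delta_k)}\abs{\nabla w_k}^2$ for any neck-supported $w_k$. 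Hence no direction in the chosen subspace can asymptotically concentrate in the neck, i.e.\ $G_k^{\mathrm{neck}}\to 0$.

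The main technical obstacle will be precisely this neck step: it is the sharp logarithmic decay rate of $\boldsymbol{\mathrm c}_{\eta,\delta_k}$ obtained in Theorem \ref{THEOREM: on Pointwise Estimate of the Gradient in the Necks p harm} that compensates the logarithmic blow-up in the Hardy inequality on degenerating annuli. A further subtlety will be to ensure that the combined limit family $\{w_\infty^i\}\cup\{v_\infty^i\}$ remains linearly independent, which is handled via a simultaneous diagonalisation of $G_k^{\mathrm{mac}}$ and $G_k^{\mathrm{bub}}$ once $G_k^{\mathrm{neck}}$ is shown to vanish, exactly following the strategy of \cite[Section 5]{DLRS25}.
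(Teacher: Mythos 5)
Your proposal captures the correct high-level scheme — argue by contradiction, diagonalise the Jacobi operator, control the neck contribution, pass to macroscopic and bubble limits, and finish with a dimension count. This is essentially the strategy of the paper, but there are two substantive gaps in the sketch. First, the pairing must be the \emph{weighted} one. The Jacobi operator $\mathcal L_{\eta,k}$ is self-adjoint with respect to $\langle\cdot,\cdot\rangle_{\omega_{\eta,k}}$, not the plain $L^2$ pairing, and the weight is essential: it is what makes the rescaled operator on the (noncompact) bubble domain have discrete spectrum in the limit, since $\widehat\omega_{\eta,\infty}(z)\sim(1+|z|^2)^{-2}$ transports the problem to the sphere via stereographic projection. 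Your Gram identity $G_k^{\mathrm{mac}}+G_k^{\mathrm{neck}}+G_k^{\mathrm{bub}}=\mathrm{Id}$ only holds if orthonormality is taken in $L^2_{\omega_{\eta,k}}$, and the bound $\inf\Lambda_{\eta,k}\ge -C\mu_0$ (Lemma \ref{LEMMA: prop on the seq mu eta k}) that keeps the eigenvalues from escaping to $-\infty$ is also a weighted statement.

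Second, the claim $G_k^{\mathrm{neck}}\to 0$ is both too strong and not justified by the argument you give. The positivity of $Q_{u_k}$ on neck-supported variations (Theorem \ref{THM: JDNW892fejeujfwef32}, which your Hardy-plus-pointwise-estimate reasoning essentially rederives) applies only to $w$ vanishing outside $A(\eta,\delta_k)$; the eigenfunctions $w_k^i$ are globally supported, so this does not directly forbid persistent neck mass. What can be proved, via a cutoff argument comparing $Q_{u_k}(w_k)$ with $Q_{u_k}(\check w_k)$ and using $W^{2,2}_{loc}$ elliptic estimates away from the concentration scales, is the weaker statement that for a unit eigenfunction the macroscopic and bubble limits cannot \emph{both} vanish (Lemma \ref{LEMMA: On the non zero of the lim functs}). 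This is precisely what is needed: it shows the limit map $c\mapsto(\sum_jc_j\phi_\infty^j,\sum_jc_j\sigma_\infty^j)$ is injective, equivalently $G_\infty^{\mathrm{mac}}+G_\infty^{\mathrm{bub}}$ is positive definite, and hence the $N+1$ limit pairs are linearly independent in $\mathcal E_{\eta,\infty}^0\times\widehat{\mathcal E}_{\eta,\infty}^0$, yielding the contradiction with Lemma \ref{LEMMA: Ind plus Null eq dim of eig for infty lim func}. The crucial missing ingredient in your sketch is this cutoff comparison, which is where the assumption that both limits vanish is actually used; without it the positivity-on-necks estimate does not touch the eigenfunctions.
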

We adapt the strategy introduced in \cite{DGR22} and closely follow \cite{DLRS25}.
Let us briefly explain what this is. 
First, we show that the necks are not contributing to the negativity of the second variation. This we do by combining the pointwise control as in estimate \eqref{EQ: 283hfunv0lpdnu13ef6zh1} and a weighted Poincar\`e inequality (Lemma A.9 of \cite{DLRS25}).
Second, we use Sylvester's law of inertia to change to a different measure incorporating the weights obtained in estimate \eqref{EQ: 283hfunv0lpdnu13ef6zh1}.
Finally, we apply spectral theory to the Jacobi operator of the second variation.
The result follows by combining these techniques.

\subsection{Positive contribution of the Necks}

In this section we prove that any variation supported in the neck region evaluates positively in the quadratic form.
More concrete:

\begin{theorem}
\label{THM: JDNW892fejeujfwef32}
For every $\be \in \left(0,\log_2 (3/2) \right)$ there exists some constant $\overline\kappa>0$ such that for $k\in \N$ large and $\eta >0$ small one has
\begin{equation}
\forall w \in V_{u_k} : \
(w=0 \text{ in } \Sigma \setminus A(\eta,\delta_k))
\Rightarrow
Q_{u_k}(w) \ge \overline\kappa \int_\Sigma \abs{w}^2 \omega_{\eta,k} \ dvol_h \ge 0,
\end{equation} 
where the weight function is given by
\begin{equation}
\omega_{\eta,k}=
\begin{cases}
\frac{1}{\abs{x}^2} \left[ \frac{\abs{x}^\beta}{\eta^\beta} + \frac{\delta_k^\beta}{\eta^\beta \abs{x}^\beta} + \frac{1}{\log^2\left(\frac{\eta^2}{\delta_k} \right)} \right] &\text{ if } x \in A(\eta,\delta_k), \\
\frac{1}{\eta^2} \left[1+  \frac{\delta_k^\beta}{\eta^{2\beta}} + \frac{1}{\log^2\left(\frac{\eta^2}{\delta_k} \right)} \right] &\text{ if } x \in \Sigma \setminus B_\eta, \\
\frac{\eta^2}{\delta_k^2} \left[\frac{(1+\eta^2)^2}{\eta^4\left(1+\delta_k^{-2} \abs{x}^2\right)^2} +  \frac{\delta_k^\beta}{\eta^{2\beta}} + \frac{1}{\log^2\left(\frac{\eta^2}{\delta_k} \right)} \right] &\text{ if } x \in B_{\delta_k/\eta}.
\end{cases}
\end{equation}
\end{theorem}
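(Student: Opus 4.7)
The plan is to reduce $Q_{u_k}(w)$ to a coercive expression on the neck by discarding the nonnegative first summand, dominating the curvature term via the pointwise estimate of Theorem~\ref{THEOREM: on Pointwise Estimate of the Gradient in the Necks p harm}, and closing with a weighted Poincar\'e inequality tailored to $\omega_{\eta,k}$. Concretely, I would first drop the $p_k(p_k-2)$-term in the definition of $Q_{u_k}$ since it is manifestly nonnegative. Compactness of $\mathcal N$ yields the uniform pointwise bound $|\mathbb I_{u_k}(\nabla u_k,\nabla u_k)\cdot\mathbb I_{u_k}(w,w)|\le C|\nabla u_k|^2|w|^2$, and Corollary~\ref{COROLLARY: Limit of na uk to pk minus 2 eq 1} reduces the conformal factor $(1+|\nabla u_k|^2)^{p_k/2-1}$ to $1+o(1)$ uniformly. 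Hence for $k$ large and any $w\in V_{u_k}$ supported in $A(\eta,\delta_k)$,
\[
Q_{u_k}(w)\ge (p_k-o(1))\int_{A(\eta,\delta_k)}|\nabla w|^2\,dvol_h - C\int_{A(\eta,\delta_k)}|\nabla u_k|^2|w|^2\,dvol_h.
\]

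Next, I would apply the pointwise bound \eqref{EQ: 283hfunv0lpdnu13ef6zh1}. Comparing its three contributions with the three summands of $\omega_{\eta,k}$ on the neck: the first two match directly up to the prefactor $\boldsymbol{\epsilon}_{\eta,\delta_k}\to 0$, while $\boldsymbol{c}_{\eta,\delta_k}/|x|^2$ rewrites as $[\boldsymbol{c}_{\eta,\delta_k}\log^2(\eta^2/\delta_k)]\cdot \bigl(|x|^2\log^2(\eta^2/\delta_k)\bigr)^{-1}$, whose prefactor also vanishes by the tail estimate in Theorem~\ref{THEOREM: on Pointwise Estimate of the Gradient in the Necks p harm}. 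This yields a pointwise majorisation $|\nabla u_k(x)|^2\le \boldsymbol{\lambda}_{\eta,k}\,\omega_{\eta,k}(x)$ on $A(\eta,\delta_k)$ with $\boldsymbol{\lambda}_{\eta,k}\to 0$ as $k\to\infty$ and $\eta\to 0$, so that
\[
Q_{u_k}(w)\ge (p_k-o(1))\int_{A(\eta,\delta_k)}|\nabla w|^2\,dvol_h - C\boldsymbol{\lambda}_{\eta,k}\int_{\Sigma}\omega_{\eta,k}|w|^2\,dvol_h.
\]

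To conclude, I would invoke a uniform weighted Poincar\'e inequality on the neck with weight $\omega_{\eta,k}$: for $w$ vanishing outside $A(\eta,\delta_k)$, Lemma~A.9 of \cite{DLRS25} provides a constant $C_0>0$ independent of $k$ and $\eta$ with $\int_{\Sigma}\omega_{\eta,k}|w|^2\,dvol_h\le C_0\int_{\Sigma}|\nabla w|^2\,dvol_h$. Combining this with the previous display gives
\[
Q_{u_k}(w)\ge \left(\frac{p_k-o(1)}{C_0}-C\boldsymbol{\lambda}_{\eta,k}\right)\int_{\Sigma}\omega_{\eta,k}|w|^2\,dvol_h\ge \overline\kappa\int_{\Sigma}\omega_{\eta,k}|w|^2\,dvol_h
\]
for $k$ large and $\eta$ small, with some fixed $\overline\kappa>0$, proving the claim.

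The main obstacle is the uniform-in-$(k,\eta)$ weighted Poincar\'e inequality with the specific three-scale weight $\omega_{\eta,k}$, whose three summands capture the Hardy weights at the outer scale $|x|\sim\eta$, the inner scale $|x|\sim\delta_k/\eta$, and the global logarithmic scale of the long annulus $A(\eta,\delta_k)$. The fine calibration of the third summand by $\log^{-2}(\eta^2/\delta_k)$ is exactly what makes the pointwise neck estimate \eqref{EQ: 283hfunv0lpdnu13ef6zh1} compatible with this Hardy structure, via the entropy bound of Lemma~\ref{LEMMA: Growth of  pk and ln etadelta} and the control of $\nabla b_{\eta,k}$ in Lemma~\ref{LEMMA: Control of const of na b in necks}.
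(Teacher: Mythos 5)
Your proposal is correct and follows exactly the route the paper takes: the paper's proof simply defers to Theorem~5.2 of \cite{DLRS25} and notes that the only modification is to replace the sphere-specific identity by the general bound $|\mathbb I_{u_k}(\nabla u_k,\nabla u_k)\cdot\mathbb I_{u_k}(w,w)|\le C|\nabla u_k|^2|w|^2$, which is precisely what you do. Your sketch (discard the nonnegative $p_k(p_k-2)$-term, control the conformal factor via Lemma~\ref{LEMMA: Linfty bd on na u in the necks unif} and Corollary~\ref{COROLLARY: Limit of na uk to pk minus 2 eq 1}, convert the pointwise neck estimate \eqref{EQ: 283hfunv0lpdnu13ef6zh1} into the dominance $|\nabla u_k|^2\le\boldsymbol\lambda_{\eta,k}\,\omega_{\eta,k}$ on the annulus, and close with the weighted Poincar\'e inequality of Lemma~A.9 in \cite{DLRS25}) is exactly the argument the paper intends the reader to import.
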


\begin{proof}
This result follows from the pointwise control on the gradient in the necks coming from Theorem \ref{THEOREM: on Pointwise Estimate of the Gradient in the Necks p harm}.
The proof of Theorem \ref{THM: JDNW892fejeujfwef32} is the same as the proof of Theorem 5.2 in \cite{DLRS25}. One simply needs to use the bound
\begin{equation}
\Big|\mathbb I_{u_k}(\nabla u_k, \nabla u_k) \cdot \mathbb I_{u_k}(w,w)\Big|
\le C \abs{\nabla u}^2 \abs{w}^2
\end{equation}
of the term appearing in the second variation of the energy in Definition \ref{DEFINITION: Morse index and Nullityof p-harmonic maps} and follow the proof of Theorem 5.2 in \cite{DLRS25}.
\end{proof}

\subsection{The Diagonalization of $Q_{u_k}$ with respect to the Weights $\omega_{\eta,k}$}
Let $\mathrm{n}_1,\dots,\mathrm{n}_{m-n} \in \Gamma\big((T\mathcal N)^\perp\big)$ be an orthonormal frame of the normal bundle of $\mathcal N$.
Define
\begin{equation}
\label{EQ: iwenf8932injIUJN128ee12}
S_{u_{k}}(\nabla u_{k}) \coloneqq \sum_{j=1}^{m-n} \Big\langle \mathbb I_{u_k}(\nabla u_k,\nabla u_k), \mathrm n_j(u_k) \Big\rangle\ D(\mathrm n_{j})_{u_k}
\end{equation}
such that for any tangent vector fields $X,Y \in \Gamma (T\mathcal N)$ we have
\begin{equation}
\mathbb I_{u_k}(\nabla u_k, \nabla u_k) \cdot \mathbb I_{u_k}(X,Y)
= (S_{u_{k}}(\nabla u_{k}) X) \cdot Y,
\end{equation}
and the pointwise bound
\begin{equation}
\abs{S_{u_{k}}(\nabla u_{k}) X}
\le C \abs{\nabla u_k}^2 \abs{X}.
\end{equation}
Consider the inner product
\begin{equation}
\langle w,v \rangle_{\omega_{\eta,k}} \coloneqq \int_{\Sigma} w \cdot v \ \omega_{\eta,k} \ dvol_{\Sigma}.
\end{equation}
\noindent
Then we look for the self-adjoint Jacobi operator with respect to $\langle \cdot,\cdot \rangle_{\omega_{\eta,k}}$ of the quadratic form $Q_{u_k}$.
Let us introduce the operator
\begin{equation}
\begin{aligned}
\mathcal L_{\eta,k}(w) 
&\coloneqq \omega_{\eta,k}^{-1}\ P_{u_k} \Bigg[ \left(-p_k\ (p_k-2) \divergence \left( \left( 1+\abs{\nabla u_k}^2\right)^{p_k/2-2} \left(\nabla u_k \cdot \nabla w \right)\ \nabla u_k \right) \right) \\
&\hspace{10mm} - p_k \divergence\left( \left( 1+\abs{\nabla u_k}^2\right)^{p_k/2-1} \nabla w \right) - p_k \left( 1+\abs{\nabla u_k}^2\right)^{p_k/2-1} S_{u_{k}}(\nabla u_{k}) w\Bigg],
\end{aligned}
\end{equation}
where $P_{u_k}(x): \R^{m}\rightarrow T_{u_k(x)} \mathcal N$ is the orthogonal projection.
Then integrating by parts we have the formula
\begin{equation}
\label{EQ: iwubfnIUBPN239fn321}
Q_{u_k}(w)= \langle \mathcal L_{\eta,k} w,w \rangle_{\omega_{\eta,k}}. 
\end{equation}
Note that by construction $\mathcal L_{\eta,k}$ is self-adjoint with respect to the inner product $\langle \cdot ,\cdot \rangle_{\omega_{\eta,k}}$, i.e.
\begin{equation}\label{EQ: wijenfuNIUD138en9qd}
\langle \mathcal L_{\eta,k} w,v \rangle_{\omega_{\eta,k}} = \langle w, \mathcal L_{\eta,k} v \rangle_{\omega_{\eta,k}} .
\end{equation}
Recall the definition of $V_{u_k}$ in \eqref{EQ: wigunUBNI8193hf23f} and consider also the larger space
\begin{equation}
U_{u_k} = \left\{ w \in L^{2}_{\omega_{\eta,k}}(\Sigma; \R^{m}) \forwhich w(x) \in T_{u_k(x)} \mathcal N, \quad \text{for a.e. } x\in\Sigma \right\}.
\end{equation}

\begin{lemma}[Spectral Decomposition]
\label{LEMMA: Spectral decomp of L}
There exists a Hilbert basis of the space $(U_{u_k}, \langle\cdot,\cdot\rangle_{\omega_{\eta,k}})$ of eigenfunctions of the operator $\mathcal L_{\eta,k}$ and the eigenvalues of $\mathcal L_{\eta,k}$ satisfy 
\begin{equation}
\lambda_1<\lambda_2<\lambda_3\ldots \rightarrow\infty.
\end{equation}
Furthermore, one has the orthogonal decomposition 
\begin{equation}
U_{u_k} = \bigoplus_{\lambda \in \Lambda_{\eta,k}}\mathcal E_{\eta,k}(\lambda),
\end{equation}
where 
\begin{equation}
\mathcal E_{\eta,k}(\lambda) \coloneqq \{ w \in V_{u_k} \ ; \ \mathcal L_{\eta,k}(w)=\lambda w \},
\qquad \Lambda_{\eta,k} \coloneqq\big\{ \lambda\in \R \ ; \ \mathcal E_{\eta,k}(\lambda) \setminus\{0\}  \neq \emptyset \big\}
\end{equation}
\end{lemma}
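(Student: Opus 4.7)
The plan is to realize $\mathcal{L}_{\eta,k}$ as an unbounded self-adjoint operator on the Hilbert space $(U_{u_k},\langle\cdot,\cdot\rangle_{\omega_{\eta,k}})$ with compact resolvent, and then invoke the spectral theorem for compact self-adjoint operators. A first observation is that, although the weight depends on $\eta$ and $k$, for fixed $\eta>0$ and $k\in\N$ the function $\omega_{\eta,k}$ is bounded above and below by strictly positive constants on $\Sigma$ (the piecewise expression is positive and stays in a finite positive interval depending only on $\eta$ and $\delta_k$). Consequently $U_{u_k}$ is topologically equivalent to the standard $L^{2}$-space of tangential sections of $u_k^{-1}T\mathcal{N}$, and the inclusion $V_{u_k}\hookrightarrow U_{u_k}$ is compact by Rellich--Kondrachov.

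Next, I would polarize $Q_{u_k}$ to a symmetric bilinear form $b(w,v)$ on $V_{u_k}$. Its principal part
\[
p_k\int_\Sigma(1+\abs{\nabla u_k}^2)^{p_k/2-1}\nabla w\cdot\nabla v\,dvol_\Sigma + p_k(p_k-2)\int_\Sigma(1+\abs{\nabla u_k}^2)^{p_k/2-2}(\nabla u_k\cdot\nabla w)(\nabla u_k\cdot\nabla v)\,dvol_\Sigma
\]
is symmetric and positive semidefinite, with Dirichlet-type coefficient uniformly bounded below by $p_k\ge 2$. The remaining zeroth-order perturbation is controlled pointwise by
\[
\abs{\mathbb{I}_{u_k}(\nabla u_k,\nabla u_k)\cdot\mathbb{I}_{u_k}(w,v)}\le C\abs{\nabla u_k}^2\abs{w}\abs{v},
\]
and $\abs{\nabla u_k}^2\in L^1(\Sigma)$ by \eqref{EQ: Uniform bound on the energy 8912H2egBeZ}. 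G\aa rding's inequality therefore produces a constant $M=M_{\eta,k}>0$ such that
\[
b_M(w,v)\coloneqq b(w,v)+M\langle w,v\rangle_{\omega_{\eta,k}}
\]
is continuous and coercive on $V_{u_k}$ with respect to its natural norm $\norm{w}_{V_{u_k}}^2\coloneqq\norm{\nabla w}_{L^2(\Sigma)}^2+\langle w,w\rangle_{\omega_{\eta,k}}$.

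Lax--Milgram then yields a bounded operator $T:U_{u_k}\to V_{u_k}$ defined by $b_M(Tf,v)=\langle f,v\rangle_{\omega_{\eta,k}}$ for every $v\in V_{u_k}$, which by the identity \eqref{EQ: iwubfnIUBPN239fn321} is exactly the resolvent $(\mathcal{L}_{\eta,k}+M)^{-1}$. Composing $T$ with the compact embedding $V_{u_k}\hookrightarrow U_{u_k}$ produces a compact self-adjoint operator on $U_{u_k}$, where self-adjointness is inherited from the symmetry of $b_M$ together with \eqref{EQ: wijenfuNIUD138en9qd}. The classical spectral theorem for compact self-adjoint operators then yields an orthonormal Hilbert basis $\{w_j\}$ of $U_{u_k}$ consisting of eigenvectors of $T$ with positive eigenvalues $\mu_j\searrow 0$, and the corresponding $\lambda_j\coloneqq\mu_j^{-1}-M$ are eigenvalues of $\mathcal{L}_{\eta,k}$ forming a strictly increasing sequence diverging to $+\infty$. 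Grouping eigenvectors sharing a common eigenvalue produces the asserted orthogonal decomposition $U_{u_k}=\bigoplus_{\lambda\in\Lambda_{\eta,k}}\mathcal{E}_{\eta,k}(\lambda)$.

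The main obstacle is bookkeeping rather than conceptual: one must verify that the tangentiality constraint $w(x)\in T_{u_k(x)}\mathcal{N}$ is preserved throughout, which is ensured by the projection $P_{u_k}$ built into the definition of $\mathcal{L}_{\eta,k}$ and by regarding $V_{u_k}$ and $U_{u_k}$ as closed subspaces of the ambient Sobolev and weighted $L^2$ spaces. The constants in G\aa rding's inequality and in the Lax--Milgram step may degenerate as $\eta\searrow 0$ or $k\to\infty$, but this is harmless for the present statement since the spectral decomposition is asserted for fixed $\eta$ and $k$; the uniform control needed later for the Morse-index argument will be extracted from $Q_{u_k}$ itself rather than from the spectral data.
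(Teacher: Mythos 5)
Your proposal follows essentially the same route as the paper, whose proof is a pointer to Lemma 5.3 of \cite{DLRS25} (also a G\aa rding/Lax--Milgram/compact-resolvent argument) together with the observation that the only thing to change is the pointwise bound $\abs{\mathbb I_{u_k}(\nabla u_k,\nabla u_k)\cdot\mathbb I_{u_k}(w,w)}\le C\abs{\nabla u_k}^2\abs{w}^2$. Your write-up correctly identifies the uniform two-sided bounds on $\omega_{\eta,k}$ for fixed $\eta,k$, the compact embedding $V_{u_k}\hookrightarrow U_{u_k}$, the self-adjointness from \eqref{EQ: wijenfuNIUD138en9qd}, and the passage from compact self-adjoint resolvent to the eigenbasis; grouping by eigenvalue gives the asserted decomposition.

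One small imprecision worth fixing: for the G\aa rding step you invoke only $\abs{\nabla u_k}^2\in L^1(\Sigma)$ via \eqref{EQ: Uniform bound on the energy 8912H2egBeZ}, but an $L^1$ potential in two dimensions is in general \emph{not} form-bounded by the Dirichlet form (consider $V(x)=\abs{x}^{-2}\log^{-2}\abs{x}$ near the origin), so the ``therefore'' is a gap as written. The fix is immediate and already implicit in the paper's setting: for fixed $k$ the map $u_k$ is a smooth $p_k$-harmonic map (Sacks--Uhlenbeck regularity), so $\abs{\nabla u_k}^2\in L^\infty(\Sigma)$ and, since $\omega_{\eta,k}$ is bounded below, the zeroth-order term is dominated by $C_{\eta,k}\langle w,w\rangle_{\omega_{\eta,k}}$; this is exactly what makes G\aa rding trivial here. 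Alternatively one could use $\abs{\nabla u_k}^2\in L^{p_k/2}$ with $p_k/2>1$, H\"older, and the 2D Sobolev embedding to get an $\epsilon$-version, but smoothness is cleaner. With that correction the argument is complete.
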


\begin{proof}
This result is obtained by using the spectral theory for compact self-adjoint operators on a Hilbert space. It is the same as in Lemma 5.3 of \cite{DLRS25}, but one has to incorporate the bound
\begin{equation}
\Big|\mathbb I_{u_k}(\nabla u_k, \nabla u_k) \cdot \mathbb I_{u_k}(w,w)\Big|
\le C \abs{\nabla u}^2 \abs{w}^2.
\end{equation}
\end{proof}

\begin{lemma}[Sylvester Law of Inertia]
\label{LEMMA: Ind plus Null eq dim of eigenspaces}
\begin{equation}
\operatorname{Ind}(u_k) + \operatorname{Null}(u_k) = \operatorname{dim}\left( \bigoplus_{\lambda\le 0} \mathcal E_{\eta,k}(\lambda) \right)
\end{equation}
\end{lemma}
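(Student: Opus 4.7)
The plan is a standard Sylvester-style diagonalization based on the self-adjoint Jacobi operator $\mathcal L_{\eta,k}$, using the identity $Q_{u_k}(w) = \langle \mathcal L_{\eta,k} w, w\rangle_{\omega_{\eta,k}}$ from \eqref{EQ: iwubfnIUBPN239fn321} together with the spectral decomposition of Lemma \ref{LEMMA: Spectral decomp of L}.

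First I would split the ambient Hilbert space orthogonally as $U_{u_k} = W^- \oplus W^+$, where
\[
W^- := \bigoplus_{\lambda \in \Lambda_{\eta,k},\, \lambda \le 0} \mathcal E_{\eta,k}(\lambda),
\qquad
W^+ := \overline{\bigoplus_{\lambda \in \Lambda_{\eta,k},\, \lambda > 0} \mathcal E_{\eta,k}(\lambda)}.
\]
Since $\lambda_j \to \infty$, only finitely many eigenvalues are non-positive, so $W^-$ is finite-dimensional. Standard elliptic regularity applied to the eigenvalue equation $\mathcal L_{\eta,k} w = \lambda w$ (a second-order linear PDE on $\Sigma$) ensures $W^- \subset V_{u_k}$. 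For $w = \sum_\lambda w_\lambda$ expanded in the Hilbert basis, orthogonality of the eigenspaces with respect to $\langle\cdot,\cdot\rangle_{\omega_{\eta,k}}$ combined with \eqref{EQ: iwubfnIUBPN239fn321} gives the diagonalization
\[
Q_{u_k}(w) = \sum_{\lambda \in \Lambda_{\eta,k}} \lambda\, \norm{w_\lambda}_{\omega_{\eta,k}}^2,
\]
so $Q_{u_k}|_{W^-} \le 0$ with kernel exactly $\mathcal E_{\eta,k}(0)$, while $Q_{u_k}|_{W^+ \setminus \{0\}} > 0$.

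Two inequalities then close the argument. For $\ge$: inside $W^-$, choosing a complementary subspace to $\mathcal E_{\eta,k}(0)$ on which $Q_{u_k}$ is strictly negative yields $\operatorname{Ind}(u_k) \ge \dim\bigoplus_{\lambda < 0} \mathcal E_{\eta,k}(\lambda)$ and $\operatorname{Null}(u_k) \ge \dim \mathcal E_{\eta,k}(0)$, hence $\operatorname{Ind}(u_k) + \operatorname{Null}(u_k) \ge \dim W^-$. For $\le$: any subspace $W \subset V_{u_k}$ on which $Q_{u_k} \le 0$ intersects $W^+$ trivially (a nonzero element of the intersection would give $Q_{u_k}>0$), so the projection along $W^+$ onto $W^-$ restricts to an injection $W \hookrightarrow W^-$; taking the supremum over such $W$ yields $\operatorname{Ind}(u_k) + \operatorname{Null}(u_k) \le \dim W^-$.

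The only delicate point is justifying the spectral expansion of $Q_{u_k}$ for a generic $w \in V_{u_k}$, namely that $\sum_\lambda \lambda\, \norm{w_\lambda}_{\omega_{\eta,k}}^2$ converges and agrees with $Q_{u_k}(w)$. This is a coercivity issue: one needs an estimate of the form $Q_{u_k}(w) + C \norm{w}_{\omega_{\eta,k}}^2 \ge c \norm{w}_{W^{1,2}}^2$, which follows from the uniform $L^\infty$ bound on $(1+\abs{\nabla u_k}^2)^{p_k/2 - 1}$ provided by Lemma \ref{LEMMA: Linfty bd on na u in the necks unif} together with the boundedness of the second fundamental form of $\mathcal N$. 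Once coercivity is in hand, the expansion extends from smooth test sections (where integration by parts is unambiguous) to all of $V_{u_k}$ by density.
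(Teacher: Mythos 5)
Your proposal is correct and takes essentially the same route as the paper, which itself invokes the spectral decomposition (Lemma~\ref{LEMMA: Spectral decomp of L}) and then defers the diagonalization details to Lemma~5.4 of \cite{DLRS25}; your write-up reconstructs exactly that standard Sylvester-law argument (split $U_{u_k}$ into non-positive and positive eigenspaces, diagonalize $Q_{u_k}$, and run the two inequalities). The only small gloss is in the $\le$ direction, where "taking the supremum over such $W$" implicitly uses that $\operatorname{Ind}(u_k)+\operatorname{Null}(u_k)$ is realized as the dimension of a single subspace on which $Q_{u_k}\le 0$ (namely $W'\oplus\ker Q_{u_k}$ for $W'$ achieving the index), a step worth stating explicitly since the definition of $\operatorname{Ind}$ requires strict negativity.
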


\begin{proof}
This is a direct consequence of the spectral decomposition in Lemma \ref{LEMMA: Spectral decomp of L}.
For all details see the proof of Lemma 5.4 in \cite{DLRS25}.
\end{proof}

Set
\begin{equation}
\label{EQ: JIN98j013dmio1das}
\mu_{\eta,k}\coloneqq \norm{\frac{\abs{\nabla u_k}^2}{\omega_{\eta,k}}}_{L^\infty(\Sigma)}.
\end{equation}
Then one has

\begin{lemma}
\label{LEMMA: prop on the seq mu eta k}
\begin{equation}
\begin{aligned}
&\hspace{-6mm} \exists \eta_0>0: \ \exists k_0>0: \ \exists C>0: \\
&i)\ \mu_0\coloneqq \sup_{\eta\in(0,\eta_0)} \sup_{k\ge k_0} \mu_{\eta,k}< \infty,  \\
&ii)\ \lim_{\eta \searrow0} \limsup_{k \rightarrow \infty} \mu_{\eta,k}=0, \\
&iii)\ \forall\eta \in(0,\eta_0): \forall k \ge k_0: \inf \Lambda_{\eta,k} \ge -C\ \mu_{\eta,k}\ge-C\ \mu_0.
\end{aligned}
\end{equation}
\end{lemma}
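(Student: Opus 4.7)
The plan is to verify the three properties by a region-by-region analysis of the pointwise ratio $|\nabla u_k|^2/\omega_{\eta,k}$ for (i) and (ii), and a Rayleigh-quotient argument using the explicit form of $Q_{u_k}$ for (iii). For (iii) the key inputs are already at hand: the pointwise second fundamental form bound $|\mathbb I_{u_k}(\nabla u_k,\nabla u_k)\cdot \mathbb I_{u_k}(w,w)|\le C|\nabla u_k|^2|w|^2$, the uniform bound $\|(1+|\nabla u_k|^2)^{p_k/2-1}\|_{L^\infty}\le C$ from Lemma \ref{LEMMA: Linfty bd on na u in the necks unif}, and non-negativity of the first (convex) term of $Q_{u_k}$. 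Dropping the non-negative term and majorizing the rest yields $Q_{u_k}(w)\ge -C\int_\Sigma |\nabla u_k|^2|w|^2\,dvol_h \ge -C\,\mu_{\eta,k}\,\|w\|_{\omega_{\eta,k}}^2$, so by \eqref{EQ: iwubfnIUBPN239fn321} and Rayleigh's principle $\inf \Lambda_{\eta,k}\ge -C\mu_{\eta,k}$.

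For (i) and (ii) I would bound $|\nabla u_k|^2/\omega_{\eta,k}$ separately on the neck $A(\eta,\delta_k)$, the outer region $\Sigma\setminus B_\eta$, and the bubble region $B_{\delta_k/\eta}$. On the neck, I would multiply the pointwise estimate \eqref{EQ: 283hfunv0lpdnu13ef6zh1} by $|x|^{-2}$ and observe that $|x|^2 \omega_{\eta,k}$ is exactly the bracketed expression appearing there (plus the $\log^{-2}$ term), so
\begin{equation*}
\frac{|\nabla u_k|^2}{\omega_{\eta,k}}\Big|_{A(\eta,\delta_k)}\le \boldsymbol\epsilon_{\eta,\delta_k}+\boldsymbol{\mathrm c}_{\eta,\delta_k}\log^2\!\Big(\frac{\eta^2}{\delta_k}\Big),
\end{equation*}
which tends to $0$ by hypothesis, giving the neck contribution both to (i) (boundedness) and (ii) (vanishing). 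On $\Sigma\setminus B_\eta$ I would use $u_k\to u_\infty$ in $C^\infty_{loc}$ away from $q$ to bound $|\nabla u_k|\le C(\eta)$ uniformly, and observe $\omega_{\eta,k}\ge \eta^{-2}$, giving ratio $\le C(\eta)\eta^2\to 0$ as $\eta\searrow 0$ (after letting $k\to\infty$).

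On the bubble region $B_{\delta_k/\eta}$ I would rescale, writing $|\nabla u_k(x)|^2=\delta_k^{-2}|\nabla v_k(x/\delta_k)|^2$. Since $v_k\to v_\infty$ in $C^\infty_{loc}(\C)$ and $v_\infty$ extends to a finite-energy harmonic map on $S^2$, the standard decay for finite-energy harmonic spheres gives $|\nabla v_\infty(z)|\le C(1+|z|^2)^{-1}$; by the $C^\infty_{loc}$ convergence this transfers to $v_k$ uniformly for $|z|\le 1/\eta$, yielding $|\nabla u_k(x)|^2\le C\delta_k^2/(\delta_k^2+|x|^2)^2$. Comparing with the leading term of $\omega_{\eta,k}$ on this piece, namely $\eta^{-2}(1+\eta^2)^2\delta_k^2(\delta_k^2+|x|^2)^{-2}$, the ratio is bounded by $C\eta^2/(1+\eta^2)^2\le C\eta^2$, which is again bounded and vanishes as $\eta\searrow 0$. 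Combining the three regions proves (i) with $\mu_0$ a uniform constant, and (ii) by taking $\limsup_k$ then $\eta\searrow 0$.

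The main obstacle I anticipate is the bubble-region estimate, specifically justifying the $(1+|z|^2)^{-1}$ gradient decay for $v_\infty$ on the whole complex plane uniformly in $k$: this requires invoking removability of the point at infinity for finite-energy harmonic maps (giving a smooth harmonic map from $S^2$ into $\mathcal N$) and then using the $C^\infty_{loc}$ convergence to pass the decay from $v_\infty$ back to $v_k$ at scales comparable to the cutoff $1/\eta$. Everything else reduces to elementary algebraic manipulations of the piecewise weight and an application of already-proven results (Theorem \ref{THEOREM: on Pointwise Estimate of the Gradient in the Necks p harm}, Lemma \ref{LEMMA: Linfty bd on na u in the necks unif}, and Definition \ref{Definition: Bubble tree convergence of p harm map with one bubble}).
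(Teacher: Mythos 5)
Your proposal is correct and follows essentially the same approach as the paper: the paper likewise decomposes $\mu_{\eta,k}$ over the three regions, uses Theorem \ref{THEOREM: on Pointwise Estimate of the Gradient in the Necks p harm} on the neck, the $C^\infty_{loc}$ convergence away from $q$ on the outer region, removability at infinity plus the stereographic decay $|\nabla v_\infty(y)|^2\le C(1+|y|^2)^{-2}$ on the bubble region, and the Rayleigh-quotient argument with Lemma \ref{LEMMA: Linfty bd on na u in the necks unif} for (iii).
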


\begin{proof}
\textit{i) \& ii):}
We decompose
\begin{equation}
\label{EQ: 2iufnujfnuiwseUJN}
\mu_{\eta,k} \le \norm{\frac{\abs{\nabla u_k}^2}{\omega_{\eta,k}}}_{L^\infty(\Sigma\setminus B_{\eta} )} + \norm{\frac{\abs{\nabla u_k}^2}{\omega_{\eta,k}}}_{L^\infty(B_\eta\setminus B_{\delta_k/\eta} )} + \norm{\frac{\abs{\nabla u_k}^2}{\omega_{\eta,k}}}_{L^\infty(B_{\delta_k/\eta} ))}
\end{equation}
 Note that by Theorem \ref{THEOREM: on Pointwise Estimate of the Gradient in the Necks p harm} we have
\begin{equation}
\label{EQ: i29jtgjijsgrisdfNJD}
\norm{\frac{\abs{\nabla u_k}^2}{\omega_{\eta,k}}}_{L^\infty(B_\eta\setminus B_{\delta_k/\eta} )} \leq \boldsymbol{\epsilon}_{\eta, \delta_k} + \boldsymbol{\mathrm c}_{\eta, \delta_k} \log^2\left(\frac{\eta^2}{\delta_k}\right)\longrightarrow0,
\end{equation}
as $k\rightarrow\infty,\eta\searrow0$.
Furthermore,
\begin{equation}
\label{EQ: 93j4ugj3uNJUSDJU}
\lim_{\eta \searrow0} \limsup_{k \rightarrow \infty}
\norm{\frac{\abs{\nabla u_k}^2}{\omega_{\eta,k}}}_{L^\infty(\Sigma\setminus B_{\eta} )}
\le \lim_{\eta \searrow0} \limsup_{k \rightarrow \infty} \eta^2 \norm{\nabla u_k}_{L^\infty(\Sigma\setminus B_{\eta} )}^2=0.
\end{equation}
Recall that due to the point removability theorem and the stereographic projection one has that
\begin{equation}
\label{EQ: uiwpgjnpi324gnu2f3njJUNDW}
\abs{\nabla v_\infty (y)}^2 \le C\frac{1}{\left(1+\abs{y}^2\right)^2}.
\end{equation}
For $x \in B_{\delta_k/\eta}$ we estimate
\begin{equation}
\label{EQ: NJUWij2f2ifjm2f12}
\begin{aligned}
\frac{\abs{\nabla u_k(x)}^2}{\omega_{\eta,k}(x)}
&\le \frac{\delta_k^2\ \eta^2\left(1+\delta_k^{-2} \abs{x}^2\right)^2}{(1+\eta^2)^2}  \abs{\nabla u_k(x)}^2  = \frac{\eta^2}{(1+\eta^2)^2} \norm{\abs{\nabla v_k(y)}^2 \left(1+ \abs{y}^2\right)^2}_{L^\infty(B_{1/\eta})}.
\end{aligned}
\end{equation}
Note that due to uniform convergence 
\begin{equation}
\limsup_{k \rightarrow \infty} \norm{\abs{\nabla v_k(y)}^2 \left(1+ \abs{y}^2\right)^2}_{L^\infty(B_{1/\eta})}
= \norm{\abs{\nabla v_\infty(y)}^2 \left(1+ \abs{y}^2\right)^2}_{L^\infty(B_{1/\eta})} \le C,
\end{equation}
where we used \eqref{EQ: uiwpgjnpi324gnu2f3njJUNDW} in the last inequality. 
Going back to \eqref{EQ: NJUWij2f2ifjm2f12} this allows to finally get
\begin{equation}
\label{EQ: 2j3mi2g4njnJNJDW}
\lim_{\eta\searrow0}\limsup_{k \rightarrow \infty}\norm{\frac{\abs{\nabla u_k}^2}{\omega_{\eta,k}}}_{L^\infty(B_{\delta_k/\eta} ))}
\le C\lim_{\eta\searrow0}\frac{\eta^2}{(1+\eta^2)^2}=0.
\end{equation}
Going back to \eqref{EQ: 2iufnujfnuiwseUJN} and combining \eqref{EQ: i29jtgjijsgrisdfNJD}, \eqref{EQ: 93j4ugj3uNJUSDJU}, \eqref{EQ: 2j3mi2g4njnJNJDW} we conclude \textit{i)} and \textit{ii)}.

\noindent
\textit{iii):} 
Let $\lambda \in \Lambda_{\eta,k}$.
Then there exists an eigenvector $0\ne w \in V_{u_k}$ of $\mathcal L_{\eta,k}$ corresponding to the eigenvalue $\lambda$, i.e. $\mathcal L_{\eta,k}(w)=\lambda w$.
We get
\begin{equation}
\begin{aligned}
\label{EQ: unu9nJIOIDWYdMwM1289jne}
\lambda \langle w,w \rangle_{\omega_{\eta,k}}
=  \langle \mathcal L_{\eta,k}(w),w \rangle_{\omega_{\eta,k}} =Q_{u_k}(w) 
\ge  -C\int_\Sigma \left( 1+\abs{\nabla u_k}^2\right)^{p_k/2-1} \abs{\nabla u_k}^2 \abs{w}^2 \ dvol_\Sigma
\end{aligned}
\end{equation}
With \eqref{EQ: unu9nJIOIDWYdMwM1289jne}, Lemma \ref{LEMMA: Linfty bd on na u in the necks unif} and \eqref{EQ: JIN98j013dmio1das} we get 
\begin{equation}
\begin{aligned}
\lambda \langle w,w \rangle_{\omega_{\eta,k}} 
\ge - C\ \mu_{\eta,k} \langle w,w \rangle_{\omega_{\eta,k}}.
\end{aligned}
\end{equation}
This completes the proof of the lemma.
\end{proof}

In the following we focus on the limiting maps $u_\infty:\Sigma\rightarrow S^n$ and $v_\infty: \C \rightarrow S^n$ as appearing in Definition \ref{Definition: Bubble tree convergence of p harm map with one bubble}.
We proceed analogous to \cite{DLRS25} and \cite{DGR22}.
We compute for $w\in V_{u_\infty}$ integrating by parts
\begin{equation}
\begin{aligned}
Q_{u_\infty}(w) 
=2 \int_{\Sigma} \left( -\Delta w - S_{u_{\infty}}(\nabla u_{\infty})w \right) \cdot w\ dvol_\Sigma.
\end{aligned}
\end{equation}
Note that for any fixed $\eta>0$ we have the pointwise limit
\begin{equation}
\omega_{\eta,k}(x) \rightarrow \omega_{\eta,\infty}(x) \coloneqq \begin{cases}
\frac{1}{\eta^2}, &\text{ if } x\in \Sigma \setminus B_{\eta} \\
\frac{1}{\eta^\beta \abs{x}^{2-\beta}}, &\text{ if } x\in B_{\eta}
\end{cases}, \qquad \text{ as } k \rightarrow \infty.
\end{equation}
We introduce 
\begin{equation}
\mathcal L_{\eta,\infty}: V_{u_{\infty}} \rightarrow V_{u_{\infty}};
\quad\mathcal L_{\eta,\infty}(w) \coloneqq 2\ P_{u_\infty} \left(\omega_{\eta,\infty}^{-1}(- \Delta w  - S_{u_{\infty}}(\nabla u_{\infty})w  \right), 
\end{equation}
such that
\begin{equation}
Q_{u_\infty}(w)= \langle \mathcal L_{\eta,\infty} w, w \rangle_{\omega_{\eta,\infty}},
\end{equation}
where we used 
\begin{equation}
\langle w,v \rangle_{\omega_{\eta,\infty}} \coloneqq \int_{\Sigma} w \cdot v \ \omega_{\eta,\infty} \ dvol_{\Sigma}.
\end{equation}
As above a simple integration by parts shows that
\begin{equation}
\begin{aligned}
Q_{v_\infty}(w)
&= 2\int_\C \left(- \Delta w  - S_{v_{\infty}}(\nabla v_{\infty})w \right) \cdot w  \ dz,
\end{aligned}
\end{equation}
where $S_{v_{\infty}}(\nabla v_{\infty})$ is defined similar to $S_{u_{\infty}}(\nabla u_{\infty})$.
Let $v_k(z) \coloneqq u_k(\delta_k z)$ as in Definition \ref{Definition: Bubble tree convergence of p harm map with one bubble}.
With a change of variables
\begin{equation}
\int_{B_{\eta}} \abs{\nabla u_k(x)}^2 \omega_{\eta,k}(x) \ dx
= \int_{B_{\frac{\eta}{\delta_k}}} \abs{\nabla v_k(z)}^2 \delta_k^2 \ \omega_{\eta,k}(\delta_k z) \ dz
\end{equation}
motivating the definition of
\begin{equation}
\widehat\omega_{\eta,k}(z) \coloneqq \delta_k^2 \ \omega_{\eta,k}(\delta_k z), \qquad z \in B_{\frac{\eta}{\delta_k}}.
\end{equation}
One has the pointwise limit
\begin{equation}
\widehat \omega_{\eta,k}(z) = \delta_k^2 \ \omega_{\eta,k}(\delta_k z) \rightarrow \widehat \omega_{\eta,\infty} (z) \coloneqq \begin{cases}
\frac{1}{\eta^\beta} \frac{1}{\abs{z}^{2+\beta}} , &\text{ if } z\in \C \setminus B_{1/\eta} \\
\frac{1}{\eta^2} \frac{(1+\eta^2)^2}{(1+\abs{z}^2)^2}, &\text{ if } z\in B_{1/\eta}
\end{cases},
\qquad \text{ as } k \rightarrow \infty.
\end{equation}
We introduce 
\begin{equation}
\widehat{\mathcal L}_{\eta,\infty}: V_{v_{\infty}} \rightarrow V_{v_{\infty}}; 
\qquad\widehat{\mathcal L}_{\eta,\infty}(w) \coloneqq 2\ P_{v_\infty} \left(\widehat\omega_{\eta,\infty}^{-1}(- \Delta w  -S_{v_{\infty}}(\nabla v_{\infty})w) \right),
\end{equation}
such that 
\begin{equation}
Q_{v_\infty}(w)= \langle \widehat{\mathcal L}_{\eta,\infty} w, w \rangle_{\widehat\omega_{\eta,\infty}},
\end{equation}
where we used 
\begin{equation}
\langle w,v \rangle_{\widehat\omega_{\eta,\infty}} \coloneqq \int_{\C} w \cdot v \ \widehat \omega_{\eta,\infty} \ dz.
\end{equation}
In the following let 
\begin{equation}
St: S^2 \rightarrow \C
\end{equation}
denote the stereographic projection.
We introduce the notation
\begin{equation}
\widetilde v_\infty \coloneqq v_\infty \circ St, \quad
\widetilde w\coloneqq w \circ St, \quad
\widetilde\omega_{\eta,\infty} \coloneqq [\widehat\omega_{\eta,\infty}(y)(1+\abs{y}^2)^2] \circ St.
\end{equation}
With a change of variables
\begin{equation}
\begin{aligned}
Q_{v_\infty}(w)
&= 2\int_\C \left(\widehat\omega_{\eta,\infty}^{-1}(- \Delta w  -S_{v_{\infty}}(\nabla v_{\infty})w \right) \cdot w \ \widehat\omega_{\eta,\infty} \ dvol_{\Sigma} \\
&= 2\int_{S^2} \left(\widetilde\omega_{\eta,\infty}^{-1}(- \Delta\widetilde w  -S_{\widetilde v_{\infty}}(\nabla\widetilde v_{\infty})\widetilde w  \right) \cdot\widetilde w \ \widetilde\omega_{\eta,\infty} \ dvol_{\Sigma} = Q_{\widetilde v_\infty}(\widetilde w),
\end{aligned}
\end{equation}
We introduce
\begin{equation}
\widetilde{\mathcal L}_{\eta,\infty}: V_{\widetilde v_{\infty}} \rightarrow V_{\widetilde v_{\infty}}; 
\qquad\widetilde{\mathcal L}_{\eta,\infty}(\widetilde w) \coloneqq2\ P_{\widetilde v_\infty} \left(\widetilde\omega_{\eta,\infty}^{-1}(- \Delta \widetilde w  -S_{\widetilde v_{\infty}}(\nabla\widetilde v_{\infty})\widetilde w) \right)
\end{equation}
Let
\begin{equation}
U_{u_\infty} = \left\{ w \in L^{2}_{\omega_{\eta,\infty}}(\Sigma; \R^{m}) \forwhich w(x) \in T_{u_\infty(x)} \mathcal N, \quad \text{for a.e. } x\in\Sigma \right\},
\end{equation}
and 
\begin{equation}
U_{\widetilde v_\infty} = \left\{ w \in L^{2}_{\widetilde\omega_{\eta,\infty}}(S^2; \R^{m}) \forwhich w(x) \in T_{ \widetilde v_\infty(x)} \mathcal N, \quad \text{for a.e. } x\in S^2 \right\}.
\end{equation}
In Lemma IV.5 of \cite{DGR22} the following result was shown:

\begin{lemma}
\begin{enumerate}
\item The separable Hilbert space $(U_{u_\infty}, \langle\cdot,\cdot\rangle_{\omega_{\eta,\infty}})$ has a Hilbert basis consisting of eigenfunctions of $\mathcal L_{\eta,\infty}$.
\item The separable Hilbert space $(U_{\widetilde v_\infty}, \langle\cdot,\cdot\rangle_{\widetilde \omega_{\eta,\infty}})$ has a Hilbert basis consisting of eigenfunctions of $\widetilde{\mathcal L}_{\eta,\infty}$.
\end{enumerate}
\end{lemma}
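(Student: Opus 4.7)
The plan is to apply the spectral theorem for compact self-adjoint operators to a perturbed version of $\mathcal L_{\eta,\infty}$ (respectively $\widetilde{\mathcal L}_{\eta,\infty}$), exactly as in the argument of Lemma IV.5 of \cite{DGR22}. The two main ingredients are (a) that the bilinear form associated to the Jacobi operator is symmetric and, after adding a sufficiently large multiple of the $\langle\cdot,\cdot\rangle_{\omega_{\eta,\infty}}$-inner product, becomes coercive on the natural energy space of variations along $u_\infty$, and (b) that the embedding of this energy space into $U_{u_\infty}$ is compact. Once these are in place, the self-adjoint inverse of the shifted operator is a compact self-adjoint operator on $U_{u_\infty}$, and the usual spectral theorem yields a Hilbert basis of eigenfunctions for this inverse, which are automatically eigenfunctions of $\mathcal L_{\eta,\infty}$.

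To carry out (a), I would first rewrite the quadratic form as
\begin{equation*}
Q_{u_\infty}(w) + C\,\langle w,w\rangle_{\omega_{\eta,\infty}}
= 2\int_\Sigma \abs{\nabla w}^2\,dvol_\Sigma
  + \int_\Sigma \bigl(C\,\omega_{\eta,\infty} - 2\,S_{u_\infty}(\nabla u_\infty)\bigr)\abs{w}^2\,dvol_\Sigma,
\end{equation*}
interpreted with $w\in V_{u_\infty}\subset W^{1,2}(\Sigma;\R^m)$ constrained pointwise to $T_{u_\infty}\mathcal N$, and use the pointwise bound $\abs{S_{u_\infty}(\nabla u_\infty)\cdot w}\le C\abs{\nabla u_\infty}^2\abs{w}$ together with $\abs{\nabla u_\infty}\in L^2(\Sigma)$ to bound the zeroth order term, via Hölder in Lorentz spaces, by a small multiple of $\norm{\nabla w}_{L^2}^2$ plus a term controlled by $C\norm{w}_{L^2_{\omega_{\eta,\infty}}}^2$, after noting that $\omega_{\eta,\infty}\ge c(\eta)>0$ on any compact subset of $\Sigma\setminus\{0\}$. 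Choosing $C$ large enough makes this shifted form coercive on $V_{u_\infty}$ with the $W^{1,2}$-norm.

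For (b), the key observation is that the inclusion of the energy space $V_{u_\infty}\subset W^{1,2}(\Sigma;\R^m)$ into $L^2_{\omega_{\eta,\infty}}(\Sigma;\R^m)=U_{u_\infty}$ is compact: indeed $\omega_{\eta,\infty}$ is bounded above on all of $\Sigma$ (it behaves like $\abs{x}^{\beta-2}$ near the origin, which is integrable in two dimensions), so this embedding factors through the compact Rellich--Kondrachov embedding $W^{1,2}(\Sigma)\hookrightarrow L^q(\Sigma)$ for any $q<\infty$ followed by multiplication by the bounded weight. Consequently the resolvent of the shifted operator, defined via Riesz representation in the coercive bilinear form, is a compact self-adjoint operator on $U_{u_\infty}$, and the spectral theorem produces the desired Hilbert basis of eigenfunctions; subtracting the shift gives the eigenfunction decomposition for $\mathcal L_{\eta,\infty}$ itself.

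For part (2), after passing to the stereographic picture the weight $\widetilde\omega_{\eta,\infty}$ is equivalent (up to $\eta$-dependent constants) to the standard round volume element on $S^2$, because the conformal factor $(1+\abs{y}^2)^2$ exactly absorbs the $(1+\abs{z}^2)^{-2}$ singularity of $\widehat\omega_{\eta,\infty}$ on $B_{1/\eta}$ and the $\abs{z}^{-2-\beta}$ decay on its complement translates to a bounded smooth function on the sphere minus the north pole (which extends continuously by the $\beta$-weight). Thus $U_{\widetilde v_\infty}$ is equivalent as a Hilbert space to $L^2(S^2;\R^m)$-sections along $\widetilde v_\infty$, and the argument above specializes to the standard spectral theory of the Jacobi operator $-\Delta - S_{\widetilde v_\infty}(\nabla\widetilde v_\infty)$ of a smooth harmonic map from the compact manifold $S^2$, for which compactness of the resolvent is standard. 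The one technical obstacle is ensuring that the pointwise constraint $w(x)\in T_{u_\infty(x)}\mathcal N$ is preserved under the resolvent, which is built into the definition through the projection $P_{u_\infty}$ and carries through the Riesz representation step without change from \cite{DGR22}.
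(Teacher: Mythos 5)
Your overall strategy is the right one and matches what Lemma IV.5 of \cite{DGR22} (which the paper simply cites without reproducing a proof) does: shift the Jacobi operator to make the associated form coercive on the natural $W^{1,2}$-type space of variations, show the embedding of that space into the weighted $L^2$ space is compact, apply the spectral theorem to the compact self-adjoint resolvent, then subtract the shift. The coercivity step and the framework are fine.

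However, your compactness argument contains a genuine error in both parts, because you assert the weight is bounded above when in fact it is not. With $\beta\in\left(0,\log_2(3/2)\right)$, the weight $\omega_{\eta,\infty}$ on $\Sigma$ blows up like $|x|^{\beta-2}$ as $x\to 0$ (and $\beta-2<0$, so this is a genuine singularity, not a bounded function); the parenthetical remark "which is integrable in two dimensions" is accurate but contradicts the preceding "bounded above", and integrability alone does not let you factor the embedding through multiplication by a \emph{bounded} weight on some $L^q$. Similarly, on the bubble side $\widetilde\omega_{\eta,\infty}$ is \emph{not} equivalent to the round volume element of $S^2$: writing $z$ for a local coordinate centered at the north pole, the stereographic coordinate $y$ behaves like $1/z$, so $\widetilde\omega_{\eta,\infty}\sim (1+|y|^2)^2/(\eta^\beta|y|^{2+\beta})\sim\eta^{-\beta}|y|^{2-\beta}\sim\eta^{-\beta}|z|^{\beta-2}\to\infty$, and hence $U_{\widetilde v_\infty}$ is a proper subspace of the $L^2$ sections along $\widetilde v_\infty$; the claimed Hilbert-space equivalence to $L^2(S^2;\R^m)$ is false, and the reduction to the "standard" Rellich compactness on $S^2$ does not go through as stated. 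The repair is small but necessary: since $\omega_{\eta,\infty}\in L^{1+\epsilon}$ for sufficiently small $\epsilon>0$ (because $(2-\beta)(1+\epsilon)<2$ when $\epsilon<\beta/(2-\beta)$), H\"older's inequality gives a bounded map $L^{q}(\Sigma)\to L^2_{\omega_{\eta,\infty}}(\Sigma)$ for $q$ large, and composing with the compact embedding $W^{1,2}(\Sigma)\hookrightarrow L^q(\Sigma)$ yields the compactness you need; the same argument (now in local coordinates near the north pole) fixes the bubble case as well. With this correction the rest of your outline is sound.
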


\noindent
We continue by introducing the limiting eigenspaces 
\begin{equation}
\mathcal E_{\eta,\infty}(\lambda) \coloneqq \{ w \in V_{u_\infty} \ ; \ \mathcal L_{\eta,\infty}(w)=\lambda w \},
\qquad
\widehat{\mathcal E}_{\eta,\infty}(\lambda) \coloneqq \{ w \in V_{v_\infty} \ ; \ \widehat{\mathcal L}_{\eta,\infty}(w)=\lambda w \}.
\end{equation}
And their nonpositive contribution
\begin{equation}
\mathcal E_{\eta,\infty}^0\coloneqq \bigoplus_{\lambda \le 0} \mathcal E_{\eta,\infty}(\lambda), \qquad
\widehat{\mathcal E}_{\eta,\infty}^0\coloneqq \bigoplus_{\lambda \le 0} \widehat{\mathcal E}_{\eta,\infty}(\lambda).
\end{equation}
In \cite{DGR22} in (IV.38) and (IV.45) the following result was shown:
\begin{lemma}
\label{LEMMA: Ind plus Null eq dim of eig for infty lim func}
\begin{equation}
\begin{aligned}
&i)\ \dim\left(\mathcal E_{\eta,\infty}^0\right) 
\le \operatorname{Ind}(u_\infty) + \operatorname{Null}(u_\infty), \\
&ii)\ \dim\left(\widehat{\mathcal E}_{\eta,\infty}^0\right) 
\le \operatorname{Ind}(\widetilde v_\infty) + \operatorname{Null}(\widetilde v_\infty),
\end{aligned}
\end{equation}
\end{lemma}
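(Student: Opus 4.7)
The plan is to reduce each inequality to the variational characterization of $\operatorname{Ind} + \operatorname{Null}$ as the maximal dimension of a subspace on which the second-variation form is non-positive. First I would invoke the spectral theory for $\mathcal L_{\eta,\infty}$ on the weighted Hilbert space $(U_{u_\infty}, \langle\cdot,\cdot\rangle_{\omega_{\eta,\infty}})$ granted by the preceding lemma: since $\mathcal L_{\eta,\infty}$ is self-adjoint with discrete spectrum accumulating only at $+\infty$, the space $\mathcal E_{\eta,\infty}^0$ of eigenfunctions associated to non-positive eigenvalues is finite-dimensional and contained in $V_{u_\infty}$, and distinct eigenspaces are orthogonal in $\langle\cdot,\cdot\rangle_{\omega_{\eta,\infty}}$.

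For $w \in \mathcal E_{\eta,\infty}^0$ decomposed as $w = \sum_j w_j$ with $\mathcal L_{\eta,\infty} w_j = \lambda_j w_j$ and $\lambda_j \le 0$, the identity $Q_{u_\infty}(w) = \langle \mathcal L_{\eta,\infty} w, w \rangle_{\omega_{\eta,\infty}}$ together with orthogonality yields
\begin{equation*}
Q_{u_\infty}(w) = \sum_j \lambda_j \,\norm{w_j}_{\omega_{\eta,\infty}}^2 \le 0.
\end{equation*}
Thus $\mathcal E_{\eta,\infty}^0 \subset V_{u_\infty}$ is a subspace on which $Q_{u_\infty}$ is non-positive, and the definitional maximality (cf. Definition \ref{DEFINITION: Morse index and Nullityof p-harmonic maps}) of $\operatorname{Ind}(u_\infty) + \operatorname{Null}(u_\infty)$ as the largest dimension of such a subspace delivers (i).

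For (ii) the same argument transfers to $S^2$ via the stereographic projection $St : S^2 \to \C$. Conformal invariance of the Dirichlet energy in two dimensions gives $Q_{v_\infty}(w) = Q_{\widetilde v_\infty}(\widetilde w)$ with $\widetilde w = w \circ St$, and the weight $\widetilde\omega_{\eta,\infty}$ was defined precisely so that $\widetilde{\mathcal L}_{\eta,\infty}$ is the pullback of $\widehat{\mathcal L}_{\eta,\infty}$ under $St$. Consequently the non-positive eigenspaces of $\widehat{\mathcal L}_{\eta,\infty}$ match those of $\widetilde{\mathcal L}_{\eta,\infty}$ under pullback, and repeating the spectral argument on $S^2$ with the harmonic map $\widetilde v_\infty$ yields $\dim \widehat{\mathcal E}_{\eta,\infty}^0 \le \operatorname{Ind}(\widetilde v_\infty) + \operatorname{Null}(\widetilde v_\infty)$.

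The main technical obstacle is the functional-analytic setup: one must verify that eigenfunctions provided by the abstract spectral theorem on the weighted Hilbert space actually lie in the $W^{1,2}$ variation space $V_{u_\infty}$ (respectively $V_{\widetilde v_\infty}$), which requires elliptic regularity for the weighted Jacobi operator together with careful control near the scales where $\omega_{\eta,\infty}^{-1}$ (respectively $\widetilde\omega_{\eta,\infty}^{-1}$) degenerates. Once this regularity is in place, the linear-algebraic step and the stereographic transfer are straightforward.
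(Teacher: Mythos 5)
The paper itself does not prove this lemma; it cites \cite{DGR22}, equations (IV.38) and (IV.45). Your approach reproduces the key idea (the diagonalization $Q_{u_\infty}(w)=\langle\mathcal L_{\eta,\infty}w,w\rangle_{\omega_{\eta,\infty}}$ plus the spectral decomposition), and your treatment of part (ii) via the conformal invariance of $Q$ under stereographic projection is correct and matches the paper's set-up. However, the final step of part (i) has a genuine imprecision. You appeal to a ``definitional maximality'' of $\operatorname{Ind}(u_\infty)+\operatorname{Null}(u_\infty)$ as the largest dimension of a subspace on which $Q_{u_\infty}\le 0$. That is not the definition given in Definition~\ref{DEFINITION: Morse index and Nullityof p-harmonic maps}, which defines $\operatorname{Ind}$ as the max dimension of a subspace on which $Q$ is \emph{strictly} negative and $\operatorname{Null}$ separately as $\dim\ker Q$. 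The characterization you invoke is a Sylvester-type theorem that itself requires proof (and in infinite dimensions it is only valid thanks to the discrete spectrum), so as written the argument is circular: the very statement you are trying to prove is essentially equivalent to the characterization you are invoking.

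The direct and complete route is to split off the strictly negative and null parts of $\mathcal E^0_{\eta,\infty}$ explicitly. Write
\begin{equation*}
\mathcal E^0_{\eta,\infty}
=\Bigl(\bigoplus_{\lambda<0}\mathcal E_{\eta,\infty}(\lambda)\Bigr)
\oplus\mathcal E_{\eta,\infty}(0)
\eqqcolon W_-\oplus\mathcal E_{\eta,\infty}(0).
\end{equation*}
On $W_-\setminus\{0\}$ the diagonalization gives $Q_{u_\infty}(w)=\sum_\lambda\lambda\,\|w_\lambda\|_{\omega_{\eta,\infty}}^2<0$, so $W_-$ is a subspace witnessing $\operatorname{Ind}(u_\infty)\ge\dim W_-$. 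Next, $w\in\mathcal E_{\eta,\infty}(0)$ means $P_{u_\infty}\bigl(\omega_{\eta,\infty}^{-1}(-\Delta w-S_{u_\infty}(\nabla u_\infty)w)\bigr)=0$; since $\omega_{\eta,\infty}>0$ is a scalar weight, this is equivalent to $w$ lying in the kernel of the Jacobi operator of $u_\infty$, i.e.\ $\mathcal E_{\eta,\infty}(0)=\ker Q_{u_\infty}$, so $\dim\mathcal E_{\eta,\infty}(0)=\operatorname{Null}(u_\infty)$. Adding these two bounds yields $\dim\mathcal E^0_{\eta,\infty}\le\operatorname{Ind}(u_\infty)+\operatorname{Null}(u_\infty)$. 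The same splitting, combined with the conformal transfer you already set up, gives part (ii). Your concluding remark about eigenfunction regularity is well aimed but is already built into the definition of $\mathcal E_{\eta,\infty}(\lambda)$ as a subspace of $V_{u_\infty}$ and is handled in the preceding lemma from \cite{DGR22}.
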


We consider the unit sphere (finite dimensional as the ambient space is finite dimensional) given by
\begin{equation}
\mathcal S_{\eta,k}^0 \coloneqq \left\{ w \in \bigoplus_{\lambda \le 0} \mathcal E_{\eta,k}(\lambda) \ ; \ \langle w,w \rangle_{\omega_{\eta,k}}=1 \right\}.
\end{equation}

\begin{lemma}
\label{LEMMA: On the non zero of the lim functs}
For any $k \in \N$ let $w_k \in \mathcal S_{\eta,k}^0$.
Then there exists a subsequence such that
\begin{equation}
w_k \rightharpoondown w_\infty, \text{ weakly in } W^{1,2}(\Sigma) \cap W^{2,2}_{loc}(\Sigma\setminus\{q\}),
\end{equation}
\begin{equation}
w_{k}(\delta_k y) \rightharpoondown \sigma_\infty(y), \text{ weakly in } W^{2,2}_{loc}(\C)
\end{equation}
and 
\begin{equation}
\text{ either }w_\infty \ne 0, \quad \text{ or } \sigma_\infty \ne 0.
\end{equation}
\end{lemma}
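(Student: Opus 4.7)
The plan is to extract uniform bounds from the sign condition $Q_{u_k}(w_k)\le 0$, pass to weak limits along a subsequence, and then rule out the simultaneous vanishing $w_\infty\equiv 0$ and $\sigma_\infty\equiv 0$ by showing that in this case the entire weighted mass of $w_k$ would have to concentrate in the neck, contradicting the positivity established in Theorem \ref{THM: JDNW892fejeujfwef32}.

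\medskip

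First I would derive a uniform $W^{1,2}$ bound. Since $w_k\in\bigoplus_{\lambda\le 0}\mathcal E_{\eta,k}(\lambda)$, Lemma \ref{LEMMA: Spectral decomp of L} gives $Q_{u_k}(w_k)\le 0$. Combining the pointwise estimate $|\mathbb I_{u_k}(\nabla u_k,\nabla u_k)\cdot \mathbb I_{u_k}(w_k,w_k)|\le C|\nabla u_k|^2|w_k|^2$ with $|\nabla u_k|^2\le \mu_{\eta,k}\,\omega_{\eta,k}\le \mu_0\,\omega_{\eta,k}$ from \eqref{EQ: JIN98j013dmio1das} and Lemma \ref{LEMMA: prop on the seq mu eta k}(i), together with the two-sided control on $(1+|\nabla u_k|^2)^{p_k/2-1}$ from Lemma \ref{LEMMA: Linfty bd on na u in the necks unif}, the inequality $Q_{u_k}(w_k)\le 0$ yields
\begin{equation*}
\int_\Sigma|\nabla w_k|^2\,dvol_\Sigma\le C\int_\Sigma|\nabla u_k|^2|w_k|^2\,dvol_\Sigma\le C\mu_0\,\langle w_k,w_k\rangle_{\omega_{\eta,k}}=C\mu_0.
\end{equation*}
Thus $(w_k)$ is uniformly bounded in $W^{1,2}(\Sigma)$. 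Plugging this into the eigenvalue equation $\mathcal L_{\eta,k}w_k=\lambda_k w_k$, with $\lambda_k$ uniformly bounded below by Lemma \ref{LEMMA: prop on the seq mu eta k}(iii), and applying elliptic regularity away from $q$ promotes the bound to $W^{2,2}_{loc}(\Sigma\setminus\{q\})$. The analogous rescaled computation gives $W^{2,2}_{loc}(\C)$ bounds on $\widetilde w_k(y)\coloneqq w_k(\delta_k y)$. Banach--Alaoglu and Rellich now extract the claimed limits $w_\infty$ and $\sigma_\infty$.

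\medskip

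The core obstacle is the nonvanishing statement, which I prove by contradiction assuming $w_\infty\equiv 0$ and $\sigma_\infty\equiv 0$. By the compact embedding, $w_k\to 0$ strongly in $L^2(\Sigma)$ and $\widetilde w_k\to 0$ strongly in $L^2_{loc}(\C)$. Splitting $\langle w_k,w_k\rangle_{\omega_{\eta,k}}=1$ over the three regions $\Sigma\setminus B_\eta$, $A(\eta,\delta_k)$ and $B_{\delta_k/\eta}$, the outer contribution is bounded by $C\eta^{-2}\|w_k\|_{L^2(\Sigma)}^2\to 0$, and the bubble contribution, after the change of variables $x=\delta_k y$, equals $\int_{B_{1/\eta}}|\widetilde w_k|^2\widehat\omega_{\eta,k}\,dy$, which tends to $0$ since $\widehat\omega_{\eta,k}$ is uniformly bounded on $B_{1/\eta}$ by a constant depending only on $\eta$. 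Hence the unit mass must collapse entirely into the neck: $\int_{A(\eta,\delta_k)}|w_k|^2\omega_{\eta,k}\to 1$.

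\medskip

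To derive the contradiction I would introduce a smooth cutoff $\chi_k$ supported in a slight enlargement of $A(\eta,\delta_k)$ and equal to $1$ on most of $A(\eta,\delta_k)$, constructed at logarithmic scales so that $\int_\Sigma|\nabla\chi_k|^2\,dvol_\Sigma\le C/\log(\eta^2/\delta_k)\to 0$. Since $\chi_k w_k\in V_{u_k}$ is supported in the neck, Theorem \ref{THM: JDNW892fejeujfwef32} gives
\begin{equation*}
Q_{u_k}(\chi_k w_k)\ge \overline\kappa\int_\Sigma|\chi_k w_k|^2\omega_{\eta,k}\,dvol_\Sigma\longrightarrow \overline\kappa>0.
\end{equation*}
Expanding $Q_{u_k}(w_k)$ through the associated bilinear form and the decomposition $w_k=\chi_k w_k+(1-\chi_k)w_k$, the cross and remainder terms are controlled by combining the $W^{1,2}$ bound on $w_k$, the $L^2$-vanishing of $w_k$ on $\Sigma\setminus A(\eta,\delta_k)$, and the logarithmic decay of $\|\nabla\chi_k\|_{L^2}$, forcing $Q_{u_k}(w_k)\ge\overline\kappa/2>0$ for $\eta$ small and $k$ large, in contradiction with $Q_{u_k}(w_k)\le 0$. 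The delicate point, carried out in detail in \cite{DGR22, DLRS25}, is the boundary-layer estimate on the cross term $B_{u_k}(\chi_k w_k,(1-\chi_k)w_k)$ where $\nabla\chi_k$ interacts with a nontrivial piece of $\nabla w_k$; this is what forces the logarithmic cutoff construction and ultimately relies on the energy quantization of Theorem \ref{THM: L2 en quant of na u}.
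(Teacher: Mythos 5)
Your overall architecture matches the paper's: derive a uniform $W^{1,2}$ bound from $Q_{u_k}(w_k)\le 0$, upgrade to $W^{2,2}_{loc}$ via elliptic regularity, extract weak limits, and rule out $(w_\infty,\sigma_\infty)=(0,0)$ by localizing a suitably modified test function to the neck and invoking Theorem~\ref{THM: JDNW892fejeujfwef32}. Two points deserve attention, the second of which is a genuine gap in your sketch.

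First, a minor but important correction: $w_k\in\mathcal S^0_{\eta,k}$ is a unit linear combination of eigenvectors with nonpositive eigenvalues, not a single eigenvector, so the equation $\mathcal L_{\eta,k}w_k=\lambda_k w_k$ you invoke does not hold. The paper handles this by writing $w_k=\sum_j c_k^j\phi_k^j$ and bounding $\|\mathcal L_{\eta,k}(w_k)\|_{L^2_{\omega_{\eta,k}}}\le\big(\sum_j(c_k^j\lambda_k^j)^2\big)^{1/2}\le|\inf\Lambda_{\eta,k}|\le C\mu_0$, which then feeds into the elliptic estimate \eqref{EQ: INUINiuwneg29e983rfn9}. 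Your idea is salvageable, but the sketch should not treat $w_k$ as an eigenvector.

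Second, and more seriously, your contradiction step is not carried out by the paper's proof and, as sketched, does not close. The paper uses the \emph{direct} (non-logarithmic) cutoff
\begin{equation*}
\check w_k \coloneqq w_k\,\chi\!\left(2\tfrac{|x|}{\eta}\right)\Big(1-\chi\!\left(\eta\tfrac{|x|}{\delta_k}\right)\Big),
\end{equation*}
whose transition regions are $B_\eta\setminus B_{\eta/2}$ and $B_{2\delta_k/\eta}\setminus B_{\delta_k/\eta}$, and then shows $|Q_{u_k}(w_k)-Q_{u_k}(\check w_k)|\to 0$ and $\int|\check w_k|^2\omega_{\eta,k}\to 1$. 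The estimate of the difference, in particular the term corresponding to $\int\chi_k(1-\chi_k)|\nabla w_k|^2$ in your bilinear-form expansion, is controlled \emph{precisely} by the strong $W^{1,2}_{loc}$ convergence $\nabla w_k\to\nabla w_\infty=0$ on $\Sigma\setminus B_{\eta/2}$ and $\nabla\sigma_k\to\nabla\sigma_\infty=0$ on $B_{2/\eta}$, which follow from the $W^{2,2}_{loc}$ bounds (Claims~1 and~2) via Rellich compactness under the contradiction hypothesis $w_\infty=\sigma_\infty=0$. The three ingredients you list (the $W^{1,2}$ bound on $w_k$, the $L^2$-vanishing of $w_k$, and logarithmic decay of $\|\nabla\chi_k\|_{L^2}$) cannot control this term: a uniform $W^{1,2}$ bound alone does not force $\|\nabla w_k\|_{L^2}$ to vanish on the cutoff transition region, and a logarithmic cutoff actually makes the transition region larger, not the argument easier (note $\omega_{\eta,k}\gtrsim|x|^{-2}\log^{-2}(\eta^2/\delta_k)$ only, so the log cutoff gains nothing in the weighted norm). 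The appeal to the $L^2$-energy quantization in this step is a red herring — it plays no role here in the paper's argument. You already established the $W^{2,2}_{loc}$ bounds in the first part of your proof; you just need to actually use the resulting strong gradient convergence when killing the cross and remainder terms, and then the logarithmic cutoff construction is unnecessary.
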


\begin{proof}
We have $Q_{u_k}(w_k)\le 0$.
With Lemma \ref{LEMMA: Linfty bd on na u in the necks unif} and Lemma \ref{LEMMA: prop on the seq mu eta k} we can estimate 
\begin{equation}
\begin{aligned}
&\hspace{-15mm}\int_\Sigma \left( 1+\abs{\nabla u_k}^2\right)^{p_k/2-1}  \mathbb I_{u_k}(\nabla u_k, \nabla u_k) \cdot \mathbb I_{u_k}(w_k,w_k) \ dvol_\Sigma \\
&\le C\norm{\left(1+\abs{\nabla u_k}^2 \right)^{\frac{p_k}{2}-1}}_{L^\infty(\Sigma)}  \norm{\frac{\abs{\nabla u_k}^2}{\omega_{\eta,k}}}_{L^{\infty}(\Sigma)} \int_{\Sigma} \abs{w_k}^2 \omega_{\eta,k} \ dvol_{\Sigma}\\ &\le C.
\end{aligned}
\end{equation}
This implies
\begin{equation}
\begin{aligned}
\int_{\Sigma} \abs{\nabla w_k}^2 \ dvol_{\Sigma}
&\le p_k \int_{\Sigma} \left( 1+\abs{\nabla u_k}^2\right)^{p_k/2-1} \abs{\nabla w_k}^2 \ dvol_{\Sigma} \\
& = \underbrace{Q_{u_k}(w_k) }_{\le 0}  \underbrace{-p_{k}(p_k-2) \int_{\Sigma } \left( 1+\abs{\nabla u_k}^2\right)^{p_k/2-2} \left(\nabla u_k \cdot \nabla w_k \right)^2  \ dvol_\Sigma}_{\le0} \\
&\hspace{20mm} + \underbrace{p_k \int_\Sigma \left( 1+\abs{\nabla u_k}^2\right)^{p_k/2-1} \mathbb I_{u_k}(\nabla u_k, \nabla u_k) \cdot \mathbb I_{u_k}(w_k,w_k) \ dvol_\Sigma}_{\le C} \\
&\le C.
\end{aligned}
\end{equation}
Therefore we may assume up to passing to subsequences that
\begin{equation}
\label{EQ: HINUJNFIUWEinwoienui9812}
w_k \rightharpoondown w_\infty \quad \text{ in } W^{1,2}(\Sigma) \qquad \text{ and } \qquad
\sigma_k(y)\coloneqq w_{k}(\delta_k y) \rightharpoondown \sigma_\infty(y) \quad \text{ in } W^{1,2}(\C).
\end{equation}
In the following we show
\\
\textbf{Claim 1:}
$\forall\eta>0:\exists C, k_0: \forall k\ge k_0:
\norm{\nabla^2 w_k}_{L^2(\Sigma\setminus B_{\eta})} \le C(\eta)$.
\\
\textbf{Proof of Claim 1:}
For $w\in V_{u_k}$ we consider the operator
\begin{equation}
\mathfrak E_{\eta,k}(w)^i \coloneqq- \partial_\alpha \left( A_{i,j}^{\alpha,\beta} \ \partial_\beta w^j \right),
\end{equation}
where
\begin{equation}
A_{i,j}^{\alpha,\beta} 
\coloneqq p_k (p_k-2)(1+\abs{\nabla u_k}^2)^{\frac{p_k}{2}-2}\ \partial_\alpha u_k^i\ \partial_\beta u_k^j
+ p_k (1+\abs{\nabla u_k}^2)^{\frac{p_k}{2}-1}\ \delta_{\alpha\beta}\ \delta_{ij}.
\end{equation}
There holds
\begin{equation}
\label{EQ: IJNIUqnufien12983JIN12dnunqdi918g}
\mathcal L_{\eta,k} (w)
= \omega_{\eta,k}^{-1}\ P_{u_k} \left[ \mathfrak E_{\eta,k}(w)-p_k(1+\abs{\nabla u_k}^2)^{\frac{p_k}{2}-1} S_{u_{k}}(\nabla u_{k}) w \right].
\end{equation}
Next, we show that the operator $\mathfrak E_{\eta,k}$ is elliptic in the sense that the coefficients satisfy for large $k$ the Legendre-Hadamard condition
\begin{equation}
\label{EQ: wbwef1715JNDWqoif9n3r13}
A_{i,j}^{\alpha,\beta} a_\alpha a_\beta b^i b^j
\ge c \abs{a}^2 \abs{b}^2, \qquad \forall a \in \R^2, \forall b \in \R^{m},
\end{equation}
as in section 3.4.1 in \cite{GM13}.
We can bound
\begin{equation}
\begin{aligned}
&\hspace{-20mm}\abs{p_k (p_k-2)(1+\abs{\nabla u_k}^2)^{\frac{p_k}{2}-2}\ \partial_\alpha u_k^i\ \partial_\beta u_k^j\ a_\alpha a_\beta b^i b^j} \\
&\le 2(n+1) \ p_k \underbrace{(p_k-2)}_{\rightarrow 0} \underbrace{\frac{\abs{\nabla u_k}^2}{1+\abs{\nabla u_k}^2}}_{\le 1} \underbrace{\norm{\left(1+\abs{\nabla u_k}^2 \right)^{\frac{p_k}{2}-1}}_{L^\infty(\Sigma)}}_{\le C} \abs{a}^2 \abs{b}^2 \\
&\le C (p_k-2) \abs{a}^2 \abs{b}^2,
\end{aligned}
\end{equation}
where we used also Lemma \ref{LEMMA: Linfty bd on na u in the necks unif}.
Hence, for large  $k$ we may assume that 
\begin{equation}
\abs{p_k (p_k-2)(1+\abs{\nabla u_k}^2)^{\frac{p_k}{2}-2}\ \partial_\alpha u_k^i\ \partial_\beta u_k^j\ a_\alpha a_\beta b^i b^j} \le \abs{a}^2 \abs{b}^2.
\end{equation}
This allows to bound
\begin{equation}
\begin{aligned}
A_{i,j}^{\alpha,\beta} a_\alpha a_\beta b^i b^j
\ge - \abs{a}^2 \abs{b}^2 + \underbrace{p_k(1+\abs{\nabla u_k}^2)^{\frac{p_k}{2}-1}}_{\ge 2}  \abs{a}^2 \abs{b}^2
\ge \abs{a}^2 \abs{b}^2.
\end{aligned}
\end{equation}
We have showed \eqref{EQ: wbwef1715JNDWqoif9n3r13} with constant $c=1$.
This proves that $\mathfrak E_{\eta,k}$ is an elliptic operator and the theory of elliptic systems as in section 4.3.1 of \cite{GM13} applies, i.e.
there exists some constant $C=C(\eta)>0$ which may depend on $\eta$ but not $k$ such that
\begin{equation}
\label{EQ: INUINiuwneg29e983rfn9}
\norm{\nabla^2 w_k}_{L^2(\Sigma \setminus B_\eta)}
\le C \left( \norm{w_k}_{W^{1,2}(\Sigma)} + \norm{\mathfrak E_{\eta,k} (w_k)}_{L^2(\Sigma \setminus B_{\frac{\eta}{2}})} \right).
\end{equation}
It remains to show that
\begin{equation}
\norm{\mathfrak E_{\eta,k} (w_k)}_{L^2(\Sigma \setminus B_{\frac{\eta}{2}})} \le C.
\end{equation}
To that end, we write with \eqref{EQ: IJNIUqnufien12983JIN12dnunqdi918g}
\begin{equation}
\begin{aligned}
\label{EQ: upINUFIPN9813rn1ior3n1io212e}
\mathfrak E_{\eta,k} (w_k) 
&= P_{u_k} \mathfrak E_{\eta,k} (w_k) + (id - P_{u_k}) \mathfrak E_{\eta,k} (w_k) \\
&= \omega_{\eta,k}\ \mathcal L_{\eta,k} (w_k) + P_{u_k} \left[p_k(1+\abs{\nabla u_k}^2)^{\frac{p_k}{2}-1} S_{u_{k}}(\nabla u_{k}) w_k \right] + (id - P_{u_k}) \mathfrak E_{\eta,k} (w_k).
\end{aligned}
\end{equation}
In the following we estimate the terms appearing in \eqref{EQ: upINUFIPN9813rn1ior3n1io212e} separately.
As by assumption $w_k \in \mathcal S_{\eta,k}^0$ we can write 
\begin{equation}
w_k = \sum_{j=1}^{N_k} c_k^j\ \phi_k^j,
\qquad \text{ where } \sum_{j=1}^{N_k} (c_k^j)^2=1
\end{equation}
and $\phi_k^1,\ldots,\phi_k^{N_k}$ is an orthonormal basis of $\oplus_{\lambda \le 0} \mathcal E_{\eta,k}(\lambda)$.
Then
\begin{equation}
\mathcal L_{\eta,k} (w_k) 
= \sum_{j=1}^{N_k} c_k^j \ \mathcal L_{\eta,k} (\phi_k^j)
= \sum_{j=1}^{N_k} c_k^j \ \lambda_{k}^j\ \phi_k^j.
\end{equation}
Hence,
\begin{equation}
\begin{aligned}
\label{EQ: IBNIUNFIPUNUIFWIN21}
\norm{\omega_{\eta,k}\ \mathcal L_{\eta,k} (w_k)}_{L^2(\Sigma \setminus B_{\frac{\eta}{2}})}
&\le \norm{\omega_{\eta,k}}_{L^\infty(\Sigma \setminus B_{\frac{\eta}{2}})}^{1/2} \norm{\mathcal L_{\eta,k} (w_k)}_{L^2_{\omega_{\eta,k}}(\Sigma\setminus B_{\frac{\eta}{2}})} \\
&\le C \norm{\mathcal L_{\eta,k} (w_k)}_{L^2_{\omega_{\eta,k}}(\Sigma)}
\le C \left(\sum_{j=1}^{N_k} (c_k^j \ \lambda_{k}^j)^2 \right)^{\frac{1}{2}}
\le C\ \inf \Lambda_{\eta,k}
\le C\  \mu_0,
\end{aligned}
\end{equation}
where we used Lemma \ref{LEMMA: prop on the seq mu eta k} and its notations as well as the fact that $\norm{\omega_{\eta,k}}_{L^\infty(\Sigma \setminus B_{\frac{\eta}{2}})}  \le C=C(\eta)$.
We also have
\begin{equation}
\begin{aligned}
\label{EQ: bfe238913bnip1nrBNIUdq}
&\norm{P_{u_k} \left[p_k(1+\abs{\nabla u_k}^2)^{\frac{p_k}{2}-1} S_{u_{k}}(\nabla u_{k}) w_k \right]}_{L^2(\Sigma \setminus B_{\frac{\eta}{2}})} 
\le C\norm{(1+\abs{\nabla u_k}^2)^{\frac{p_k}{2}-1} \abs{\nabla u_k}^2 w_k}_{L^2(\Sigma \setminus B_{\frac{\eta}{2}})} \\
&\hspace{30mm} \le C \norm{(1+\abs{\nabla u_k}^2)^{\frac{p_k}{2}-1} \abs{\nabla u_k}}_{L^\infty(\Sigma \setminus B_{\frac{\eta}{2}})} \mu_{\eta,k}^{\frac{1}{2}} \ \underbrace{\norm{w_k}_{L^2_{\omega_{\eta,k}}(\Sigma \setminus B_{\frac{\eta}{2}})}}_{\le1} \le C(\eta),
\end{aligned}
\end{equation}
where we used the strong convergence in \eqref{EQ: Cond of bubb conv ji0wr931jJGA} and also Lemma \ref{LEMMA: prop on the seq mu eta k} with its notations.
Now
\begin{equation}
\begin{aligned}
\label{EQ: inIFUPf293ngfgo4on2g}
(id - P_{u_k}) \mathfrak E_{\eta,k} (w_k)
&= (id - P_{u_k}) \Bigg[ p_k\ (p_k-2) \divergence \left( \left( 1+\abs{\nabla u_k}^2\right)^{p_k/2-2} \left(\nabla u_k \cdot \nabla w_k \right)\ \nabla u_k \right) \\
&\hspace{20mm} + p_k \divergence\left( \left( 1+\abs{\nabla u_k}^2\right)^{p_k/2-1} \nabla w_k \right)\Bigg] \\
&= p_k\ (p_k-2) \underbrace{(id - P_{u_k})\Bigg[ \nabla\left( \frac{\left(\nabla u_k \cdot \nabla w_k \right)}{1+\abs{\nabla u_k}^2} \right) \left( 1+\abs{\nabla u_k}^2\right)^{p_k/2-1} \cdot \nabla u_k \Bigg]}_{=0 \text{ (since }(id-P_{u_k})\partial_\alpha u =0)}  \\
&\hspace{10mm} -p_k\ (p_k-2) \frac{\left(\nabla u_k \cdot \nabla w_k \right)}{1+\abs{\nabla u_k}^2} \left( 1+\abs{\nabla u_k}^2\right)^{p_k/2-1} \mathbb{I}_{u_k}\big(\nabla u_k , \nabla u_k\big) \\
&\hspace{20mm} + p_k \ (id - P_{u_k})\Bigg[\divergence\left( \left( 1+\abs{\nabla u_k}^2\right)^{p_k/2-1} \nabla w_k \right)\Bigg],
\end{aligned}
\end{equation}
where we also used \eqref{EQ: Euler-Lagrange equations in div form for p harm}.
Recall that $P:\mathcal N \rightarrow \R^{m\times m}$ is the map that to any $q \in \mathcal N$ assigns the matrix corresponding to the orthogonal projection from $\R^m$ to $T_q\mathcal N$.
Using the facts
\begin{equation}
\nabla (P_{u_k}) = (DP)_{u_k}(\nabla u_k), \qquad
(id - P_{u_k})w_k =0, \qquad
\nabla (P_{u_k}) \cdot  w_k = (id-P_{u_k}) \nabla w_k,
\end{equation}
we get
\begin{equation}
\begin{aligned}
\label{EQ: njIFIN0913rj1f3okJNIqdw}
&(id - P_{u_k})\Bigg[\divergence\left( \left( 1+\abs{\nabla u_k}^2\right)^{p_k/2-1} \nabla w_k \right)\Bigg] \\
&= \divergence \left( \left( 1+\abs{\nabla u_k}^2\right)^{p_k/2-1} (id - P_{u_k})\nabla w_k  \right) + \left( 1+\abs{\nabla u_k}^2\right)^{p_k/2-1} \nabla (P_{u_k}) \cdot \nabla w_k \\
&= \divergence \left( \left( 1+\abs{\nabla u_k}^2\right)^{p_k/2-1} \nabla (P_{u_k}) \cdot  w_k \right) + \left( 1+\abs{\nabla u_k}^2\right)^{p_k/2-1} \nabla (P_{u_k}) \cdot \nabla w_k \\
&= \divergence \left( \left( 1+\abs{\nabla u_k}^2\right)^{p_k/2-1} \nabla (P_{u_k}) \right) \cdot w_k + 2\left( 1+\abs{\nabla u_k}^2\right)^{p_k/2-1} \nabla (P_{u_k}) \cdot \nabla w_k \\
&= \divergence \left( \left( 1+\abs{\nabla u_k}^2\right)^{p_k/2-1} (DP)_{u_k}(\nabla u_k) \right) \cdot w_k + 2\left( 1+\abs{\nabla u_k}^2\right)^{p_k/2-1} \nabla (P_{u_k}) \cdot \nabla w_k \\
&= - \left( 1+\abs{\nabla u_k}^2\right)^{p_k/2-1} \Big[(DP)_{u_k}\ \mathbb{I}_{u_k}\big(\nabla u_k , \nabla u_k\big)\Big] \cdot w_k \\
&\hspace{20mm}+ \left( 1+\abs{\nabla u_k}^2\right)^{p_k/2-1} \Bigg(\nabla\Big[(DP)_{u_k}\Big]\cdot (\nabla u_k)\Bigg)\cdot w_k \\
&\hspace{40mm} + 2\left( 1+\abs{\nabla u_k}^2\right)^{p_k/2-1} \Big[(DP)_{u_k}(\nabla u_k)\Big] \cdot \nabla w_k,
\end{aligned}
\end{equation}
where we also used \eqref{EQ: Euler-Lagrange equations in div form for p harm}.
We can with 
\begin{equation}
\abs{(DP)_{u_k}} \le \norm{DP}_{L^\infty}, \qquad
\abs{\nabla\Big[(DP)_{u_k}\Big]} \le \norm{D^2P}_{L^\infty} \abs{\nabla u_k},
\end{equation}
\eqref{EQ: inIFUPf293ngfgo4on2g} and \eqref{EQ: njIFIN0913rj1f3okJNIqdw} now bound
\begin{equation}
\begin{aligned}
\label{EQ: ZUBFIUEZBNUI1893rhn1r12e}
&\hspace{-5mm}\norm{(id - P_{u_k}) \mathfrak E_{\eta,k} (w_k)}_{L^2(\Sigma\setminus B_{\frac{\eta}{2}})} \\
&\le C \norm{(1+\abs{\nabla u_k}^2)^{\frac{p_k}{2}-1} \abs{\nabla u_k} }_{L^\infty(\Sigma \setminus B_{\frac{\eta}{2}})} \norm{\nabla w_k}_{L^2(\Sigma\setminus B_{\frac{\eta}{2}})} \\
&\hspace{20mm} + C \norm{(1+\abs{\nabla u_k}^2)^{\frac{p_k}{2}-1} \abs{\nabla u_k} }_{L^\infty(\Sigma \setminus B_{\frac{\eta}{2}})} \mu_{\eta,k}^{\frac{1}{2}} \norm{w_k}_{L^2_{\omega_{\eta,k}}(\Sigma\setminus B_{\frac{\eta}{2}})} \\
&\hspace{40mm} +C \norm{(1+\abs{\nabla u_k}^2)^{\frac{p_k}{2}-1} \abs{\nabla u_k} }_{L^\infty(\Sigma \setminus B_{\frac{\eta}{2}})} \norm{\nabla w_k}_{L^2(\Sigma\setminus B_{\frac{\eta}{2}})} \\
&\le C(\eta),
\end{aligned}
\end{equation}
where in the last line we used \eqref{EQ: HINUJNFIUWEinwoienui9812}, the strong convergence coming from \eqref{EQ: Cond of bubb conv ji0wr931jJGA} and Lemma \ref{LEMMA: prop on the seq mu eta k}.
Combining \eqref{EQ: INUINiuwneg29e983rfn9}, \eqref{EQ: upINUFIPN9813rn1ior3n1io212e}, \eqref{EQ: IBNIUNFIPUNUIFWIN21}, \eqref{EQ: bfe238913bnip1nrBNIUdq} and \eqref{EQ: ZUBFIUEZBNUI1893rhn1r12e} Claim 1 follows.
\\
Let now $\sigma_k(y) \coloneqq w_{k}(\delta_k y)$.
Proceeding similar as in the proof of Claim 1 we can also show
\\
\textbf{Claim 2:}
$\forall\eta>0:\exists C, k_0: \forall k\ge k_0:
\norm{\nabla^2 \sigma_k}_{L^2(B_{\frac{1}{\eta}})} \le C(\eta)$.
\\
With Claim 1 and Claim 2 we find that
\begin{equation}
\label{EQ: JINFU10930fi13fjm}
w_k \rightharpoondown w_\infty, \text{ weakly in } W^{2,2}_{loc}(\Sigma\setminus\{q\}),
\end{equation}
and
\begin{equation}
\label{EQ: UIN1902j1i0niqw1221mnq12D}
w_{k}(\delta_k y) \rightharpoondown \sigma_\infty(y), \text{ weakly in } W^{2,2}_{loc}(\C).
\end{equation}
It remains to show that either $w_\infty \ne 0$ or $\sigma_\infty \ne 0$.
For a contradiction assume that $w_\infty = 0$ and $\sigma_\infty = 0$.
Let $\chi \in C^\infty([0,\infty);[0,1])$ with $\chi=1$ on $[0,1]$ and $\chi=0$ on $[2,\infty)$.
Introduce the notation
\begin{equation}
\check w_k \coloneqq
w_k\ \chi\left(2 \frac{\left|x\right|}{\eta}\right)\left(1-\chi\left(\eta \frac{\left|x\right|}{\delta_k}\right)\right)  \in W^{1,2}_0(A(\eta,\delta_k);\R^{m})\cap V_{u_k}.
\end{equation}
Because of \eqref{EQ: JINFU10930fi13fjm}, \eqref{EQ: UIN1902j1i0niqw1221mnq12D} and because $w_\infty = 0$ and $\sigma_\infty = 0$ we find that
\begin{equation}
\label{EQ: BPNBFEIU193rnu13nijqUN129812}
\lim_{k \rightarrow \infty} \norm{\nabla(w_k- \check w_k)}_{L^2(\Sigma)}=0, \qquad
\lim_{k \rightarrow \infty} \norm{w_k- \check w_k}_{L^2(\Sigma)}=0
\end{equation}
We have
\begin{equation}
\begin{aligned}
\label{EQ: iwebfnwuUIN8913rn1d1312}
&\hspace{-10mm}\abs{Q_{u_k}(w_k)-Q_{u_k}(\check w_k)}\\
&\le p_k (p_k-2) \int_\Sigma \left( 1+\abs{\nabla u_k}^2\right)^{\frac{p_k}{2} -2} \abs{\left(\nabla u_k \cdot \nabla w_k \right)^2 - \left(\nabla u_k \cdot \nabla \check w_k \right)^2 } \ dvol_\Sigma \\
&\hspace{10mm}+ p_k \int_\Sigma \left( 1+\abs{\nabla u_k}^2\right)^{\frac{p_k}{2} - 1} \abs{\abs{\nabla w_k}^2-\abs{\nabla\check w_k}^2} \ dvol_\Sigma \\
&\hspace{20mm}+ p_k \int_\Sigma \left( 1+\abs{\nabla u_k}^2\right)^{\frac{p_k}{2} - 1} \abs{\nabla u_k}^2 \Big|\mathbb I_{u_k}(w_k,w_k) - \mathbb I_{u_k}(\check w_k,\check w_k)\Big| \ dvol_\Sigma \\
&\eqqcolon I + II + III
\end{aligned}
\end{equation}
First, with Lemma \ref{LEMMA: Linfty bd on na u in the necks unif} and \eqref{EQ: BPNBFEIU193rnu13nijqUN129812}
\begin{equation}
\begin{aligned}
I &\le p_k (p_k-2) \underbrace{\norm{\left(1+\abs{\nabla u_k}^2 \right)^{\frac{p_k}{2}-1}}_{L^\infty(\Sigma)}}_{\le C} \int_\Sigma \underbrace{\frac{\abs{\nabla u_k}^2}{1+\abs{\nabla u_k}^2}}_{\le 1} \left(\abs{\nabla w_k}^2 + \abs{\nabla\check w_k}^2 \right) \ dvol_\Sigma \\
&\le C (p_k-2) \int_\Sigma \left(\abs{\nabla w_k}^2 + \abs{\nabla\check w_k}^2 \right) \ dvol_\Sigma
\le C (p_k-2) \rightarrow 0, \qquad \text{ as } k \rightarrow \infty.
\end{aligned}
\end{equation}
Second, with Lemma \ref{LEMMA: Linfty bd on na u in the necks unif}, \eqref{EQ: JINFU10930fi13fjm}, \eqref{EQ: UIN1902j1i0niqw1221mnq12D}, \eqref{EQ: BPNBFEIU193rnu13nijqUN129812} and \eqref{EQ: Cond of bubb conv ji0wr931jJGA}
\begin{equation}
\begin{aligned}
II
&\le C \norm{\left(1+\abs{\nabla u_k}^2 \right)^{\frac{p_k}{2}-1}}_{L^\infty(\Sigma)}\int_\Sigma \abs{\abs{\nabla w_k}^2-\abs{\nabla\check w_k}^2} \ dvol_\Sigma \\
&\le C \underbrace{\int_{\Sigma\setminus B_{\frac{\eta}{2}}} \abs{\abs{\nabla w_k}^2-\abs{\nabla\check w_k}^2} \ dvol_\Sigma}_{\rightarrow 0, \text{ as }k \rightarrow \infty }
+C \underbrace{\int_{B_{\frac{2\delta_k}{\eta}}} \abs{\abs{\nabla w_k}^2-\abs{\nabla\check w_k}^2} \ dvol_\Sigma}_{\rightarrow 0, \text{ as }k \rightarrow \infty }.
\end{aligned}
\end{equation}
Recall now the orthonormal frame of the normal bundle introduced in \eqref{EQ: iwenf8932injIUJN128ee12}.\\
Third, with Lemma \ref{LEMMA: Linfty bd on na u in the necks unif}, \eqref{EQ: JINFU10930fi13fjm}, \eqref{EQ: UIN1902j1i0niqw1221mnq12D}, \eqref{EQ: BPNBFEIU193rnu13nijqUN129812}, \eqref{EQ: HINUJNFIUWEinwoienui9812} and \eqref{EQ: Cond of bubb conv ji0wr931jJGA}
\begin{equation}
\begin{aligned}
III
&\le C \norm{\left(1+\abs{\nabla u_k}^2 \right)^{\frac{p_k}{2}-1}}_{L^\infty(\Sigma)} \int_\Sigma \overbrace{\abs{\nabla u_k}^2}^{\le C(\eta)} \Big|\mathbb I_{u_k}(w_k,w_k) - \mathbb I_{u_k}(\check w_k,\check w_k)\Big| \ dvol_\Sigma \\
&\le C \int_{\Sigma} \Big|\mathbb I_{u_k}(w_k,w_k) - \mathbb I_{u_k}(\check w_k,\check w_k)\Big| \ dvol_\Sigma \\
&\le C \sum_{j=1}^{m-n} \int_{\Sigma} \Big| \langle D(\mathrm n_j)_{u_k} w_k,w_k \rangle - \langle D(\mathrm n_j)_{u_k} \check w_k,\check w_k \rangle \Big| \ dvol_\Sigma \\
&\le C \sum_{j=1}^{m-n} \int_{\Sigma} \Big| \langle D(\mathrm n_j)_{u_k} w_k,w_k-\check w_k \rangle\Big| + \Big| \langle D(\mathrm n_j)_{u_k} w_k-D(\mathrm n_j)_{u_k} \check w_k,\check w_k \rangle \Big| \ dvol_\Sigma \\
&\le C \sum_{j=1}^{m-n} \underbrace{\norm{w_k}_{L^2(\Sigma)}}_{\le C} \underbrace{\norm{w_k-\check w_k}_{L^2(\Sigma)}}_{\rightarrow 0, \text{ as }k \rightarrow \infty } + \underbrace{\norm{w_k-\check w_k}_{L^2(\Sigma)}}_{\rightarrow 0, \text{ as }k \rightarrow \infty } \underbrace{\norm{\check w_k}_{L^2(\Sigma)}}_{\le C}.
\end{aligned}
\end{equation}
Going back to \eqref{EQ: iwebfnwuUIN8913rn1d1312} we have shown $ \lim_{k \rightarrow\infty} \abs{Q_{u_k}(w_k)-Q_{u_k}(\check w_k)}=0.$
The fact that $Q_{u_k}(w_k) \le 0$ implies 
\begin{equation}
\label{EQ: vuicojnthbonurihy}
\limsup_{k\rightarrow\infty} Q_{u_k}(\check w_k) \le 0.
\end{equation}
But now also with  \eqref{EQ: JINFU10930fi13fjm} and \eqref{EQ: UIN1902j1i0niqw1221mnq12D}
\begin{equation}
\begin{aligned}
\abs{1- \int_{\Sigma} \abs{\check w_k}^2 \omega_{\eta,k}\ dvol_{\Sigma}}
&= \abs{\int_{\Sigma} \abs{ w_k}^2 \omega_{\eta,k}\ dvol_{\Sigma} - \int_{\Sigma} \abs{\check w_k}^2 \omega_{\eta,k}\ dvol_{\Sigma} } \\
& \le \underbrace{\int_{\Sigma\setminus B_{\frac{\eta}{2}}} \abs{\abs{ w_k}^2 - \abs{\check w_k}^2} \overbrace{\omega_{\eta,k}}^{\le C(\eta)} \ dvol_{\Sigma}}_{\rightarrow0, \text{ as }k \rightarrow\infty} + \underbrace{\int_{B_{\frac{2\delta_k}{\eta}}} \abs{\abs{ w_k}^2 - \abs{\check w_k}^2} \overbrace{\omega_{\eta,k}}^{\le C(\eta)} \ dvol_{\Sigma}}_{\rightarrow0, \text{ as }k \rightarrow\infty},
\end{aligned}
\end{equation}
which implies
\begin{equation}
\lim_{k\rightarrow\infty}
\int_{\Sigma} \abs{\check w_k}^2 \omega_{\eta,k}\ dvol_{\Sigma}=1.
\end{equation}
Since $\check w_k \in W^{1,2}_0(A(\eta,\delta_k);\R^{m})\cap V_{u_k}$ we have thanks to Theorem \ref{THM: JDNW892fejeujfwef32} for some constant $\overline\kappa >0$ 
\begin{equation}
\liminf_{k\rightarrow\infty} Q_{u_k} (\check w_k)
\ge \overline\kappa \ \lim_{k\rightarrow\infty} \int_{\Sigma} \abs{\check w_k}^2 \omega_{\eta,k}\ dvol_{\Sigma}
= \overline\kappa >0.
\end{equation}
This is a contradiction to \eqref{EQ: vuicojnthbonurihy} and we have shown that either $w_\infty \ne 0$ or $\sigma_\infty \ne 0$.
\end{proof}

We can finally show

\begin{proof}[Proof (of Theorem \ref{THM: usc morse index sacks-uhlenbeck with one bubble})]
By Lemma \ref{LEMMA: Ind plus Null eq dim of eigenspaces} and Lemma \ref{LEMMA: Ind plus Null eq dim of eig for infty lim func} it suffices to show that for $k \in \N$ large and $\eta>0$ small
\begin{equation}
\operatorname{dim}\left( \bigoplus_{\lambda\le 0} \mathcal E_{\eta,k}(\lambda) \right) 
\le \dim\left(\mathcal E_{\eta,\infty}^0\right) + \dim\left(\widehat{\mathcal E}_{\eta,\infty}^0\right).
\end{equation}
Let $N\in\N$ be fixed. 
For $k \in\N$ let $\phi_k^1,\ldots,\phi_k^N$ be a free orthonormal family of $U_{u_k}$ of eigenfunctions of the operator $\mathcal L_{\eta,k}$ with according negative eigenvalues $\lambda_{k}^1, \ldots, \lambda_{k}^N \le 0$.
For a contradiction we assume that 
\begin{equation}
\label{EQ: jnioecnHI1829he1n1ed231}
N > \dim\left(\mathcal E_{\eta,\infty}^0\right) + \dim\left(\widehat{\mathcal E}_{\eta,\infty}^0\right).
\end{equation}
By Lemma \ref{LEMMA: On the non zero of the lim functs} we find that up to subsequences
\begin{equation}
\label{EQ: iuwenfiUNIUND9183hrn9231}
\phi_k^j \rightharpoondown \phi_\infty^j, \text{ weakly in } W^{2,2}_{loc}(\Sigma\setminus\{q\})
\end{equation}
and 
\begin{equation}
\sigma_k^j(z) \coloneqq \phi_k^j(\delta_k^j z) \rightharpoondown \sigma_\infty^j(z), \text{ weakly in } W^{2,2}_{loc}(\C).
\end{equation}
Let $r>0$ and $w\in W^{1,2}(\Sigma;\R^m)$ with $\supp (w)\subset \Sigma \setminus B_r(q)$. Consider
\begin{equation}
\label{EQ: inwejnJININF130fn}
\begin{aligned}
\langle \mathcal L_{\eta,k} \phi_k^j , w \rangle_{\omega_{\eta,k}} 
&= \underbrace{p_k\ (p_k-2) \int_{\Sigma \setminus B_r(q)} \left( 1+\abs{\nabla u_k}^2\right)^{p_k/2-2} \left(\nabla u_k \cdot \nabla \phi_k^j \right) \left(\nabla u_k \cdot P_{u_k} \nabla w \right)  \ dvol_\Sigma}_{\eqqcolon I_{\eta,k}} \\
&\hspace{10mm}+\underbrace{p_k \int_{\Sigma \setminus B_r(q)} \left( 1+\abs{\nabla u_k}^2\right)^{p_k/2-1} \nabla  \phi_k^j \cdot P_{u_k} \nabla w \ dvol_\Sigma}_{\eqqcolon II_{\eta,k}} \\
&\hspace{20mm}-\underbrace{p_k \int_{\Sigma \setminus B_r(q)} \left( 1+\abs{\nabla u_k}^2\right)^{p_k/2-1}  S_{u_{k}}(\nabla u_{k}) \phi_k^j \cdot P_{u_k}w \ dvol_\Sigma}_{\eqqcolon III_{\eta,k}} \\
\end{aligned}
\end{equation}
First, with Lemma \ref{LEMMA: Linfty bd on na u in the necks unif}
\begin{equation}
\begin{aligned}
\abs{I_{\eta,k}} 
&\le p_k (p_k-2) \underbrace{\norm{\left(1+\abs{\nabla u_k}^2 \right)^{\frac{p_k}{2}-1}}_{L^\infty(\Sigma)}}_{\le C} \int_{\Sigma \setminus B_r(q)} \underbrace{\frac{\abs{\nabla u_k}^2}{1+\abs{\nabla u_k}^2}}_{\le 1} \abs{\nabla \phi_k^j} \ \abs{\nabla w}  \ dvol_\Sigma \\
&\le C (p_k-2) \underbrace{\norm{\nabla \phi_k^j}_{L^2(\Sigma \setminus B_r(q))}}_{\le C} \norm{\nabla w}_{L^2(\Sigma)}
\le C (p_k-2) \rightarrow0, \qquad \text{ as } k \rightarrow \infty.
\end{aligned}
\end{equation}
Second, using \eqref{EQ: iuwenfiUNIUND9183hrn9231} and Corollary \ref{COROLLARY: Limit of na uk to pk minus 2 eq 1} we know that
\begin{equation}
\left( 1+\abs{\nabla u_k}^2\right)^{p_k/2-1} \nabla  \phi_k^j \rightharpoondown \nabla  \phi_\infty^j, \text{ weakl in } W^{1,2}_{loc}(\Sigma\setminus\{q\})
\end{equation} 
and hence also with \eqref{EQ: Cond of bubb conv ji0wr931jJGA}
\begin{equation}
II_{\eta,k} \rightarrow 2 \int_\Sigma \nabla  \phi_\infty^j \cdot P_{u_\infty} \nabla w \ dvol_\Sigma,  \qquad \text{ as } k \rightarrow \infty.
\end{equation}
Third, using \eqref{EQ: iuwenfiUNIUND9183hrn9231}, \eqref{EQ: Cond of bubb conv ji0wr931jJGA} and Corollary \ref{COROLLARY: Limit of na uk to pk minus 2 eq 1} we know that
\begin{equation}
\left( 1+\abs{\nabla u_k}^2\right)^{p_k/2-1} S_{u_{k}}(\nabla u_{k}) \phi_k^j \rightharpoondown S_{u_{\infty}}(\nabla u_{\infty}) \phi_\infty^j, \text{ weakl in } W^{2,2}_{loc}(\Sigma\setminus\{q\})
\end{equation} 
and hence 
\begin{equation}
III_{\eta,k} \rightarrow 2 \int_\Sigma S_{u_{\infty}}(\nabla u_{\infty}) \phi_\infty^j \cdot P_{u_\infty} w \ dvol_\Sigma,  \qquad \text{ as } k \rightarrow \infty.
\end{equation}
Going back to \eqref{EQ: inwejnJININF130fn} we have shown that 
\begin{equation}
\langle \mathcal L_{\eta,k} \phi_k^j , w \rangle_{\omega_{\eta,k}} \rightarrow \langle \mathcal L_{\eta,\infty} \phi_\infty^j , w \rangle_{\omega_{\eta,\infty}} , \qquad \text{ as } k \rightarrow \infty.
\end{equation}
This means that
\begin{equation}
\mathcal L_{\eta,k} \phi_k^j \rightharpoondown  \mathcal L_{\eta,\infty} \phi_\infty^j, \text{ weakly in } W^{1,2}_{loc}(\Sigma\setminus\{q\}).
\end{equation}
This together with
\begin{equation}
\mathcal L_{\eta,k} \phi_k^j = \lambda_k^j \phi_k^j \rightharpoondown \lambda_\infty^j \phi_\infty^j, \text{ weakly in } W^{1,2}_{loc}(\Sigma\setminus\{q\})
\end{equation}
 gives
\begin{equation}
\mathcal L_{\eta,\infty} \phi_\infty^j = \lambda_\infty^j \phi_\infty^j \text{ in } \mathcal D^\prime(\Sigma\setminus\{q\}).
\end{equation}
Since $\phi_\infty^j\in W^{1,2}(\Sigma)$ we can deduce using the Lemma A.10 in \cite{DLRS25} on Sobolev capacity that indeed
\begin{equation}
\mathcal L_{\eta,\infty} \phi_\infty^j = \lambda_\infty^j \phi_\infty^j \text{ in } \Sigma .
\end{equation}
Similar one shows that
\begin{equation}
\widehat {\mathcal L}_{\eta,\infty} \sigma_\infty^j = \lambda_\infty^j \sigma_\infty^j \text{ in } \C.
\end{equation}
Now since by \eqref{EQ: jnioecnHI1829he1n1ed231} $N > \dim({\mathcal E}_{\eta,\infty}^0 \times \widehat{\mathcal E}_{\eta,\infty}^0)$ we have that the family $(\phi_\infty^j,\sigma_\infty^j)_{j=1\ldots N}$ is linearly dependent and we can find some $(c_\infty^1, \ldots, c_\infty^N)\neq 0$ such that
\begin{equation}
\label{EQ: wifnuJNIOFN103mnf223}
\sum_{j=1}^N c_\infty^j \phi_\infty^j=0 \qquad \text{ and } \qquad \sum_{j=1}^N c_\infty^j \sigma_\infty^j=0.
\end{equation}
Let 
\begin{equation}
w_k \coloneqq \frac{1}{\left(\sum_{j=1}^N (c_\infty^j)^2\right)^{\frac{1}{2}}} \sum_{j=1}^N c_\infty^j \phi_k^j.
\end{equation}
Then $w_k \in \mathcal S_{\eta,k}^0$ and by Lemma \ref{LEMMA: On the non zero of the lim functs} up to subsequences
\begin{equation}
w_k \rightharpoondown w_\infty, \text{ in } \dot W^{1,2}(\Sigma) \qquad \text{ and } \qquad
w_{k}(\delta_k y+x_k) \rightharpoondown \sigma_\infty(y), \text{ in } \dot W^{1,2}(\C)
\end{equation}
and either $w_\infty \ne 0$ or $\sigma_\infty \ne 0$.
But by \eqref{EQ: wifnuJNIOFN103mnf223} one has $(w_\infty,\sigma_\infty)=(0,0)$.
This is a contradiction. 
\end{proof}

\newpage
\newpage
\ \vspace{15mm}
\appendix
\section{Appendix}

For completeness, here we provide a proof of the lower semicontinuity of the Morse index in our setting of Sacks-Uhlenbeck sequences to a homogeneous manifold. 
In the following we are always working in the setting and with the notations  introduced in Section \ref{SECTION: Preliminary definition and results}.\\
(Recall for instance $u_k, u_\infty, v_k, v_\infty, V_u, Q_u(\cdot), \Sigma, \mathcal N$.)

\begin{proposition}[Lower Semicontinuity of Morse Index]\label{LSC}
For large $k$ there holds
\begin{equation}
\operatorname{Ind}_{E}(u_\infty) + \operatorname{Ind}_{E}(v_\infty)
\le \operatorname{Ind}_{E_p}(u_k)\,.
\end{equation}
\end{proposition}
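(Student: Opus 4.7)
The plan is to use the variational characterization of the Morse index: from optimal negative directions at $u_\infty$ and $v_\infty$, construct an $(N_\infty + N_b)$-dimensional subspace of $V_{u_k}$ on which $Q_{u_k}$ is negative definite, where $N_\infty = \operatorname{Ind}_E(u_\infty)$ and $N_b = \operatorname{Ind}_E(v_\infty)$. By density of compactly supported smooth sections inside $V_{u_\infty}$ combined with the finite dimensionality of a maximal negative subspace, first choose linearly independent $w_1, \ldots, w_{N_\infty} \in \Gamma(u_\infty^{-1}T\mathcal{N})$, smooth and compactly supported in $\Sigma \setminus \{q\}$, spanning a subspace $W_\infty$ with $Q_{u_\infty}|_{W_\infty \setminus \{0\}} < 0$. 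Analogously, select compactly supported smooth $\tilde w_1, \ldots, \tilde w_{N_b} \in \Gamma(v_\infty^{-1}T\mathcal{N})$ with $Q_{v_\infty}$ negative definite on their span $W_b$.

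Next, transfer these variations to $u_k$ by orthogonal projection: set $w_i^k(x) \coloneqq P_{u_k(x)}(w_i(x))$ and $\tilde w_j^k(x) \coloneqq P_{u_k(x)}\bigl(\tilde w_j\bigl((x-x_k)/\delta_k\bigr)\bigr)$, with $P_q \colon \mathbb{R}^m \to T_q\mathcal{N}$ the orthogonal projection. Thanks to the $C^\infty_{loc}$-convergences in Definition \ref{Definition: Bubble tree convergence of p harm map with one bubble}, $w_i^k \to w_i$ smoothly on $\supp(w_i)$ (which stays away from $q$), while $\tilde w_j^k(x_k + \delta_k y) \to \tilde w_j(y)$ smoothly on $\supp(\tilde w_j)$. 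For $k$ large, the supports of the $\tilde w_j^k$ lie in $B_{R\delta_k}(x_k)$ with $\delta_k \to 0$, hence are disjoint from the fixed supports of the $w_i^k$; together with the $C^\infty$-convergence of each family to its independent limit, this yields linear independence of the full collection $\{w_i^k\} \cup \{\tilde w_j^k\}$ in $V_{u_k}$ for $k$ large.

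It remains to show $Q_{u_k}$ is negative definite on their span. For the macroscopic directions, smoothness of $u_k \to u_\infty$ on $\supp(w_i)$ together with $(1+|\nabla u_k|^2)^{p_k/2-1}\to 1$ and the vanishing of the $p_k(p_k-2)$ prefactor yield $Q_{u_k}(w_i^k) \to Q_{u_\infty}(w_i)$. For the bubble directions, the change of variables $x = x_k + \delta_k y$ together with the conformal invariance of the 2-D Dirichlet integrand, the $C^\infty_{loc}$-convergence $v_k \to v_\infty$, and $\delta_k^{p_k-2}\to 1$ (a direct consequence of Lemma \ref{LEMMA: Growth of  pk and ln etadelta}) give $Q_{u_k}(\tilde w_j^k) \to Q_{v_\infty}(\tilde w_j)$. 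The disjoint-support structure makes the cross terms vanish in the limit, so the Gram matrix of $Q_{u_k}$ on the span converges to the block-diagonal matrix associated with $Q_{u_\infty}|_{W_\infty} \oplus Q_{v_\infty}|_{W_b}$, which is strictly negative definite. Hence $Q_{u_k}$ is itself negative definite on this $(N_\infty+N_b)$-dimensional subspace for $k$ large, giving $\operatorname{Ind}_{E_{p_k}}(u_k) \ge N_\infty + N_b$. The main technical obstacle is the bubble rescaling: one must verify that the subcritical correction factors $(1+|\nabla u_k|^2)^{(p_k-2)/2}$ and $(1+|\nabla u_k|^2)^{p_k/2-2}$, evaluated near the concentration where $|\nabla u_k| \sim \delta_k^{-1}|\nabla v_\infty|$, converge to unity uniformly on $\supp(\tilde w_j)$, which is precisely guaranteed by Corollary \ref{COROLLARY: Limit of na uk to pk minus 2 eq 1} together with Lemma \ref{LEMMA: Growth of  pk and ln etadelta}.
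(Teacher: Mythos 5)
Your proof takes essentially the same approach as the paper: pick maximal negative subspaces for $Q_{u_\infty}$ and $Q_{v_\infty}$, replace the eigendirections by approximants supported away from the blow-up point (macroscopic) and in a $\delta_k$-scaled ball (bubble), project onto $T_{u_k}\mathcal{N}$, show convergence of the second variations using Corollary \ref{COROLLARY: Limit of na uk to pk minus 2 eq 1} and Lemma \ref{LEMMA: Growth of  pk and ln etadelta}, and conclude linear independence from the asymptotically disjoint supports via a Gram-matrix argument. The one spot you compress is the claim that the negative subspaces can be realized by sections compactly supported away from $q$; this rests on the zero $W^{1,2}$-capacity of points (the paper invokes its Lemma A.10 explicitly), which is worth flagging since it is where the blow-up point is being removed.
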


\begin{proof}
We set  $N_1:= \operatorname{Ind}(u_\infty)$ and $N_2 := \operatorname{Ind}(v_\infty)$.
Let $w^1,\dots,w^{N_1}$ be a basis of 
\begin{equation}
\{ w \in V_{u_\infty} ; Q_{u_\infty}(w)<0 \}.
\end{equation}
and let $\sigma^1,\dots,\sigma^{N_2}$
\begin{equation}
\{ \sigma \in V_{v_\infty} ; Q_{v_\infty}(\sigma)<0 \}.
\end{equation}
 There holds
\begin{equation}
\label{EQ: oiuwnfer9NIOQn23dq}
(id-P_{u_\infty}) w^i=0,
\qquad \text{ and } \qquad
(id-P_{v_\infty}) \sigma^i=0,
~~~\mbox{for all}~ i.
\end{equation}
 
\textbf{1.} By Lemma A.10 in \cite{DLRS25} on Sobolev capacity there exists a sequence $(f_l^i)_l\subset W^{1,2}(\Sigma)$ and radii $r_l^i>0$ such that
\begin{equation}
\label{EQ: uinwfe09jij0i2d3nJNdq}
\lim_{l\rightarrow\infty}\norm{f_l^i- w^i}_{W^{1,2}(\Sigma)}=0, \qquad \forall i=1,\dots,N_1
\end{equation}
and with $\supp(f^i_l)\subset \Sigma \setminus B_{r_l^i}$.
For $l \in \N$, $k \in \N$ and $i=1,\dots,N_1$, let us introduce
\begin{equation}
w_{l,k}^i \coloneqq 
f^i_l- (id-P_{u_k}) f^i_l \qquad \text{in } \Sigma,
\end{equation}
where $P_q: \R^{m}\rightarrow T_{q} \mathcal N$ is the orthogonal projection for $q\in \mathcal N$.
One has $w_{l,k}^i \in V_{u_k}$.

{\bf Claim 1.} It holds:
\begin{equation}
\label{EQ: iuwnfenUINdiu23nf234f223ya}
\lim_{l\rightarrow\infty} \limsup_{k \rightarrow \infty} \norm{w^i_{l,k}-w^i}_{W^{1,2}(\Sigma)}=0.
\end{equation}
{\bf Proof of Claim 1.}
Let $\rho>0$.
Then for large $l\ge l_0(\rho)$, we have by \eqref{EQ: uinwfe09jij0i2d3nJNdq} that
\begin{equation}
\norm{f^i_{l}-w^i}_{W^{1,2}(\Sigma)} < \frac{\rho}{2 + 2\norm{\nabla u_\infty}_{L^\infty(\Sigma)}}
\end{equation}
For such a fixed $l\ge l_0$ we can bound
\begin{equation}
\begin{aligned}
&\hspace{-5mm}\norm{w^i_{l,k}-w^i}_{W^{1,2}(\Sigma)}\\
&\le \norm{f^i_{l}-w^i}_{W^{1,2}(\Sigma)}
+ \norm{(id-P_{u_k}) f^i_l}_{W^{1,2}(\Sigma \setminus B_{r^i_l})} \\
&\le \norm{f^i_{l}-w^i}_{W^{1,2}(\Sigma)}
+ \norm{(id-P_{u_k}) (f^i_l-w_i)}_{W^{1,2}(\Sigma \setminus B_{r^i_l})}
+ \norm{(id-P_{u_k}) w_i}_{W^{1,2}(\Sigma \setminus B_{r^i_l})} \\
&\le 2\norm{f^i_{l}-w^i}_{W^{1,2}(\Sigma)}
+ C\norm{\nabla u_k}_{L^\infty(\Sigma \setminus B_{r^i_l})}\norm{f^i_l-w_i}_{L^{2}(\Sigma)}
+ \norm{(id-P_{u_k}) w_i}_{W^{1,2}(\Sigma \setminus B_{r^i_l})} \\
&\le C\left(2 + \norm{\nabla u_k}_{L^\infty(\Sigma \setminus B_{r^i_l})}\right)\norm{f^i_{l}-w^i}_{W^{1,2}(\Sigma)}
+ \norm{(id-P_{u_k}) w_i}_{W^{1,2}(\Sigma \setminus B_{r^i_l})} \\
&\le C\frac{2 + \norm{\nabla u_k}_{L^\infty(\Sigma \setminus B_{r^i_l})}}{2 + 2\norm{\nabla u_\infty}_{L^\infty(\Sigma)}} \rho
+ \norm{(id-P_{u_k}) w_i}_{W^{1,2}(\Sigma \setminus B_{r^i_l})}
\end{aligned}
\end{equation}
By \eqref{EQ: Cond of bubb conv ji0wr931jJGA} we find that for $k\ge k_0(l)$
\begin{equation}
\frac{2 + \norm{\nabla u_k}_{L^\infty(\Sigma \setminus B_{r^i_l})}}{2 + 2\norm{\nabla u_\infty}_{L^\infty(\Sigma)}} 
\le \frac{2 + 2\norm{\nabla u_\infty}_{L^\infty(\Sigma \setminus B_{r^i_l})}}{2 + 2\norm{\nabla u_\infty}_{L^\infty(\Sigma)}} \le 1.
\end{equation}
Combining \eqref{EQ: oiuwnfer9NIOQn23dq} and \eqref{EQ: Cond of bubb conv ji0wr931jJGA} we get
\begin{equation}
\limsup_{k\rightarrow\infty}\norm{(id-P_{u_k}) w_i}_{W^{1,2}(\Sigma \setminus B_{r^i_l})}=0.
\end{equation}
This gives 
\begin{equation}
\lim_{l\rightarrow\infty} \limsup_{k \rightarrow \infty} \norm{w^i_{l,k}-w^i}_{W^{1,2}(\Sigma)}
< C\rho,
\end{equation}
which shows the claim 1.
We can now use \eqref{EQ: iuwnfenUINdiu23nf234f223ya} to get
\begin{equation}
\lim_{l\rightarrow\infty} \limsup_{k \rightarrow \infty}
\abs{Q_{u_k}(w^i_{l,k})-Q_{u_\infty}(w^i)}=0
\end{equation}
This implies that, for large $l$ and large $k$, we have 
\begin{equation}
\label{EQ: iunwef9u2u3n2193un12ejjNqd}
Q_{u_k}(w^i_{l,k}) <0.
\end{equation} \\
\textbf{2.} Let $(g_l^i)_{l}\subset W^{1,2}(\C)$ be a sequence and $R_l^i\nearrow\infty$ as $l\to +\infty$  be  such that   $\supp(g^i_l)\subset B_{R_l^i}$, and 
\begin{equation}
\label{EQ: uq9nfenUINIUDNui9qew}
\lim_{l\rightarrow\infty}\norm{g_l^i- \sigma^i}_{W^{1,2}(\C)}=0, \qquad \forall i=1,\dots,N_2.
\end{equation}
 For $l \in \N$, $k \in \N$ (with $\delta_k \le \frac{1}{R_l^i}$) and $i=1,\dots,N_2$, let us introduce
\begin{equation}
\sigma_{l,k}^i \coloneqq
\begin{cases}
g^i_l(\frac{\cdot}{\delta_k}) -(id-P_{u_k(\cdot)})g^i_l(\frac{\cdot}{\delta_k}) ,
 & \abs{x} \le \delta_k R^i_l \le 1 \\
0, &\text{else}.
\end{cases}
\end{equation}
One has $\sigma_{l,k}^i \in V_{u_k}$.\par

{\bf Claim 2.}  We have:
\begin{equation}
\label{EQ: ONJJD0i13diamndJN12q156}
\lim_{l\rightarrow\infty} \limsup_{k \rightarrow \infty} \norm{\sigma^i_{l,k}(\delta_k \cdot) -\sigma^i}_{W^{1,2}(\C)}=0.
\end{equation}
{\bf Proof of Claim 2.}
Let $\rho>0$.
Then for large $l\ge l_0(\rho)$, we have by \eqref{EQ: uq9nfenUINIUDNui9qew} that
\begin{equation}
\norm{g^i_{l}-\sigma^i}_{W^{1,2}(\C)} < \frac{\rho}{2 + 2\norm{\nabla v_\infty}_{L^\infty(\C)}}
\end{equation}
For such a fixed $l\ge l_0$ we can bound
\begin{equation}
\begin{aligned}
&\hspace{-5mm}\norm{\sigma^i_{l,k}(\delta_k \cdot) -\sigma^i}_{W^{1,2}(\C)}\\
&\le \norm{g^i_{l}-\sigma^i}_{W^{1,2}(\C)}
+ \norm{(id-P_{v_k}) g^i_l}_{W^{1,2}(B_{R^i_l})} \\
&\le \norm{g^i_{l}-\sigma^i}_{W^{1,2}(\C)}
+ \norm{(id-P_{v_k}) (g^i_l-\sigma^i)}_{W^{1,2}(B_{R^i_l})}
+ \norm{(id-P_{v_k}) \sigma_i}_{W^{1,2}(B_{R^i_l})} \\
&\le 2\norm{g^i_{l}-\sigma^i}_{W^{1,2}(\C)}
+ C\norm{\nabla v_k}_{L^\infty(B_{R^i_l})}\norm{g^i_l-\sigma_i}_{L^{2}(B_{R^i_l})}
+ \norm{(id-P_{v_k}) \sigma_i}_{W^{1,2}(B_{R^i_l})} \\
&\le C\left(2 + \norm{\nabla v_k}_{L^\infty(B_{R^i_l})}\right)\norm{g^i_{l}-\sigma^i}_{W^{1,2}(\C)}
+ \norm{(id-P_{v_k}) \sigma_i}_{W^{1,2}(B_{R^i_l})}\\
&\le C\frac{2 + \norm{\nabla v_k}_{L^\infty(B_{R^i_l})}}{2 + 2\norm{\nabla v_\infty}_{L^\infty(\C)}} \rho
+ \norm{(id-P_{v_k}) \sigma_i}_{W^{1,2}(B_{R^i_l})}
\end{aligned}
\end{equation}
By \eqref{EQ: Cond of bubb conv ji0wr931jJGA} we find that for $k\ge k_0(l)$
\begin{equation}
\frac{2 + \norm{\nabla v_k}_{L^\infty(B_{R^i_l})}}{2 + 2\norm{\nabla v_\infty}_{L^\infty(\C)}} 
\le \frac{2 + 2\norm{\nabla v_\infty}_{L^\infty(B_{R^i_l})}}{2 + 2\norm{\nabla v_\infty}_{L^\infty(\Sigma)}} \le 1.
\end{equation}
Combining \eqref{EQ: oiuwnfer9NIOQn23dq} and \eqref{EQ: Cond of bubb conv ji0wr931jJGA} we get
\begin{equation}
\limsup_{k\rightarrow\infty} \norm{(id-P_{v_k}) \sigma_i}_{W^{1,2}(B_{R^i_l})}=0.
\end{equation}
This gives 
\begin{equation}
\lim_{l\rightarrow\infty} \limsup_{k \rightarrow \infty} \norm{\sigma^i_{l,k}(\delta_k \cdot) -\sigma^i}_{W^{1,2}(\C)}
< C\rho,
\end{equation}
which shows the claim 2. \\
We can now use \eqref{EQ: ONJJD0i13diamndJN12q156} to get
\begin{equation}
\lim_{l\rightarrow\infty} \limsup_{k \rightarrow \infty}
\abs{Q_{v_k}(\sigma^i_{l,k})-Q_{v_\infty}(\sigma^i)}=0.
\end{equation}
This implies that for large $l$ and large $k$ we have
\begin{equation}
\label{EQ: jiNIJDWIQIIOEM893n1e9n}
Q_{v_k}(\sigma^i_{l,k}) <0.
\end{equation} \\
\textbf{3.}
Now we claim that for large $l$ and large $k$ the family
\begin{equation}
\mathcal B_{l,k} \coloneqq \{ w_{l,k}^1, \dots, w_{l,k}^{N_1}, \sigma_{l,k}^1, \dots, \sigma_{l,k}^{N_2} \} \subset V_{u_k}
\end{equation}
is linearly independent.
(In the following $G(B)$ denotes the determinant of the Gram matrix of a given basis $B$.)
As $(w^i)_{i=1,\dots, N_1}$ is a linear independent family we know that the determinant of the Gram matrix is non-zero, i.e. there is some $\kappa_1>0$ such that
\begin{equation}
\label{EQ: iu2n3fpfdwi0ej2013frn381209HZu1}
G(\{w_1,\dots,w_{N_1}\})= \det \Big[ \left( \langle w^i , w^j \rangle_{L^2(\Sigma)} \right)_{i,j} \Big]\ge \kappa_1 >0.
\end{equation}
Similar as $(\sigma^i)_{i=1,\dots, N_1}$ is a linear independent family we know that the determinant of the Gram matrix is non-zero, i.e. there is some $\kappa_2>0$ such that
\begin{equation}
\label{EQ: inHUIniuqefni23f9013r}
G(\{\sigma_1,\dots,\sigma_{N_2}\})=
\det \Big[ \left( \langle \sigma^i , \sigma^j \rangle_{L^2(\C)} \right)_{i,j} \Big]\ge \kappa_2 >0.
\end{equation}
Now note that as $\supp(w^i_{l,k})\subset\Sigma\setminus B_{r^i_l}$ and $\supp(\sigma^i_{l,k})\subset B_{R^i_l\delta_k}$ for large $l$ and large $k$
we will find that
\begin{equation}
\langle w^i_{l,k} , \sigma^j_{l,k} \rangle_{L^2(\Sigma)} = 0, \qquad \forall i=1,\dots,N_1,\forall j=1,\dots,N_2.
\end{equation}
Hence, if we compute the Gram matrix of $\mathcal B_{l,k}$ we have
\begin{equation}
\label{EQ: inuweunfiNIJ3209r1nirqfuz1ws0jw}
G(\mathcal B_{l,k})=
\det \Bigg[ \left( \langle w^i_{l,k} , w^j_{l,k} \rangle_{L^2(\Sigma)} \right)_{i,j} \Bigg]\
\det \Bigg[ \left( \langle \sigma^i_{l,k} , \sigma^j_{l,k} \rangle_{L^2(\Sigma)}  \right)_{i,j} \Bigg]
\end{equation}
By \eqref{EQ: iuwnfenUINdiu23nf234f223ya} and \eqref{EQ: ONJJD0i13diamndJN12q156} we know that 
\begin{equation}
\label{EQ: ijnwfeuinIUui123d1}
 \langle w^i_{l,k} , w^j_{l,k} \rangle_{L^2(\Sigma)} \rightarrow \langle w^i , w^j \rangle_{L^2(\Sigma)},
\qquad
\langle \sigma^i_{l,k} , \sigma^j_{l,k} \rangle_{L^2(\Sigma)} \rightarrow \langle \sigma^i , \sigma^j\rangle_{L^2(\C)},
\end{equation}
as $k \rightarrow \infty$ and $l \rightarrow \infty$.
Combining \eqref{EQ: ijnwfeuinIUui123d1} with \eqref{EQ: inuweunfiNIJ3209r1nirqfuz1ws0jw} and 	\eqref{EQ: iu2n3fpfdwi0ej2013frn381209HZu1}, \eqref{EQ: inHUIniuqefni23f9013r} we find  for large $l$ and large $k$ that
\begin{equation}
G(\mathcal B_{l,k}) \ge \frac{\kappa_1 \kappa_2}{2}>0.
\end{equation}
As the determinant of the Gram matrix of $\mathcal B_{l,k}$ is non-zero we deduce that the family $\mathcal B_{l,k}$ is linearly independent.
This with \eqref{EQ: iunwef9u2u3n2193un12ejjNqd} and \eqref{EQ: jiNIJDWIQIIOEM893n1e9n} gives
\begin{equation}
N_1+N_2= \dim(span(\mathcal B_{l,k})) \le \dim(\{ w \in V_{u_k} ; Q_{u_k}(w)<0 \}) = \operatorname{Ind}(u_k).
\end{equation}
This concludes the proof of Proposition \ref{LSC}.
\end{proof}

\newpage

\end{document}